\documentclass[12pt,leqno,a4paper]{amsart}
\usepackage{fullpage}
\usepackage{color,amsmath,amsthm,amscd,amssymb,colonequals,enumerate}
\usepackage[english]{babel}
\usepackage{tikz}
\usetikzlibrary{cd}
\usetikzlibrary{decorations.markings}
\usepackage{blkarray} 
\usepackage{verbatim}

\theoremstyle{plain}
\newtheorem{thm}{Theorem}[section]
\newtheorem{prop}[thm]{Proposition}
\newtheorem{lem}[thm]{Lemma}
\newtheorem{cor}[thm]{Corollary}

\theoremstyle{remark}
\newtheorem*{rem}{Remark} 
\theoremstyle{definition}
\newtheorem{exam}[thm]{Example}

\newtheorem{hyps}[thm]{Hypotheses}

\numberwithin{equation}{subsection}

\let\nc\newcommand
\let\dmo\DeclareMathOperator
\dmo{\Spec}{Spec}
\dmo{\Hom}{Hom}
\dmo{\Pic}{Pic}
\dmo{\coker}{coker}
\dmo{\im}{im}
\dmo{\Div}{Div}
\dmo{\ord}{ord}
\dmo{\Gal}{Gal}
\dmo{\Aut}{Aut}
\dmo{\Frac}{Frac}

\nc\cG{\mathcal G}
\nc\cT{\mathcal T}
\nc\cX{\mathcal X}
\nc\cJ{\mathcal J}
\nc\cL{\mathcal L}
\nc\cO{\mathcal O}
\nc\cI{\mathcal I}
\nc\norm{\mathcal N}
\nc\R{\mathcal R}
\nc\cF{\mathcal{F}}
\nc\fm{\mathfrak m}
\nc\ZZ{\mathbb{Z}}
\nc\QQ{\mathbb{Q}}
\nc\QQbar{\overline{\QQ}}
\nc\Gm{\mathbb{G}_{\mathrm m}}
\nc\FFp{\mathbb{F}_p}
\nc\FFpbar{\overline{\mathbb{F}}_p}
\nc\XX{\mathbb X}
\nc\et{\mathrm{\acute et}}
\nc\etale{\'etale}
\nc\supp{\mathrm{supp}}
\nc\sing{\mathrm{sing}}
\nc\lin{\mathrm{lin}}
\nc\unip{\mathrm{unip}}
\nc\tor{\mathrm{tor}}
\nc\red{\mathrm{red}}
\nc\id{\mathrm{id}}
\nc\pr{\mathrm{pr}}
\nc\diag{\mathrm{diag}}
\nc\sm{\mathrm{sm}}
\nc\sn{\mathrm{sn}}
\nc\reg{\mathrm{reg}}
\nc\ks{k^\mathrm{sep}}
\nc\Fsep{\F^{\mathrm{sep}}}
\nc\cGm{\cG_\mathrm m}
\nc\Fsh{\F^{\mathrm{sh}}}
\nc\Fhatsh{\widehat\F^{\mathrm{sh}}}
\nc\Ssh{S^{\mathrm{sh}}}
\renewcommand\Rsh{R^{\mathrm{sh}}}

\nc\Ytilde{\widetilde Y}
\nc\Gtilde{\widetilde\Gamma}
\nc\Itilde{\widetilde I}
\nc\Ctilde{\widetilde C}
\nc\itilde{\tilde i}
\nc\kbar{\bar k}     
\nc\isom{\xrightarrow\sim}
\nc\longisom{\xrightarrow{\ \sim\ }}
\nc\mmu{\mbox{$\raisebox{-0.59ex}
  {$l$}\hspace{-0.18em}\mu\hspace{-0.88em}\raisebox{-0.98ex}{\scalebox{2}
  {$\color{white}.$}}\hspace{-0.416em}\raisebox{+0.88ex}
  {$\color{white}.$}\hspace{0.46em}$}{}}
\nc\defeq{\colonequals} 
\nc\mul{\times} 
\nc\vzero{v_0} 
\nc\divides{\mid}
\nc\notdivides{\nmid}
\nc\laplace{\square}
\nc\F{F} 
\nc\ch[1]{\XX(#1)} 
\nc\comp[1]{\Phi(#1)} 
\nc\kModulus{\Sigma} 
\nc\ka{\kappa}

\nc\xp{x'} 
\nc\Fp{\F'} 
\nc\Ner{\mbox{\upshape N\'er}} 
\nc\e[1]{e_{#1}}
\let\setminus\smallsetminus
\nc\zero{_0} 
\nc\pspl{{p\textup{-spl}}}
\nc\M{L} 
\nc\dual[1]{#1^\vee} 
\nc\SPS{\mathsf{SS}}
\nc\SPSM{\SPS_{\hspace*{-0.01in}M}}
\nc\SPSMp{\SPS_{\hspace*{-0.01in}M'}}
\nc\C[2][d]{C_{#1,#2}}
\DeclareMathOperator{\SL}{SL}
\nc\noop[1]{} 

\nc\new[1]{#1}
\nc\mpa{}
\hyphenation{semi-abelian}
\setlength\overfullrule{5pt}
\dedicatory{To the memory of Bas Edixhoven}

\usepackage{microtype}

\usepackage[lite]{amsrefs} 

\title{Modular curves and N\'{e}ron models of generalized Jacobians}
\usepackage[
	pdfauthor={Bruce W. Jordan, Kenneth A. Ribet, and Anthony J. Scholl},
        pdftitle={Modular curves and N\'{e}ron models of generalized
          Jacobians},
        hypertexnames=false
        ]{hyperref}

\begin{document}

\subjclass[2020]{Primary 14K30; Secondary 11G18, 14G35}
\keywords{N\'{e}ron models, generalized Jacobians, modular curves,
  cuspidal modulus}

\author{Bruce~W.~Jordan}
\address{Department of Mathematics, Baruch College, The City University
of New York, One Bernard Baruch Way, New York, NY 10010-5526, USA}
\email{bruce.jordan@baruch.cuny.edu}

\author{Kenneth~A.~Ribet}
\address{Mathematics Department, University of California, Berkeley, CA
94720, USA}
\email{ribet@math.berkeley.edu}

\author{Anthony~J.~Scholl}
\address{Department of Pure Mathematics and Mathematical Statistics, 
Centre for Mathematical Sciences, Wilberforce Road, Cambridge CB3 0WB, 
England}
\email{a.j.scholl@dpmms.cam.ac.uk}
\begin{abstract}
  Let $X$ be a smooth geometrically connected projective curve over
  the field of fractions of a discrete valuation ring $R$, and $\fm$ a
  modulus on $X$, given by a closed subscheme of $X$ which is
  geometrically reduced. The generalized Jacobian $J_\fm$ of $X$ with
  respect to $\fm$ is then an extension of the Jacobian of $X$ by a
  torus.  We describe its N\'eron model, together with the character
  and component groups of the special fibre, in terms of a regular model of
  $X$ over $R$. This generalizes Raynaud's well-known description for
  the usual Jacobian. We also give some computations for generalized
  Jacobians of modular curves $X_0(N)$ with moduli supported on the
  cusps.
\end{abstract}
\maketitle

\subsection*{Introduction}

Let $R$ be a discrete valuation ring, with field of fractions $\F$ and
residue field $k$. Let $\cX$ be a regular scheme, proper and flat over
$S=\Spec R$, whose generic fibre $X=\cX_\F$ is a smooth curve. In
\cite{Ray70} Raynaud describes the relationship between the N\'eron
model of the Jacobian $J=\Pic^0_{X/\F}$ of $X$ and the relative Picard
functor $P=\Pic_{\cX/S}$. The aim of this paper is twofold: first, to
extend Raynaud's results to the generalized Jacobian $J_\fm$ of $X$
with respect to a reduced modulus $\fm$. Secondly, to apply these
results to compute the component and character groups of the N\'eron
models of generalized Jacobians attached to modular curves and moduli
supported on cusps.

Our motivation for this work arises from
applications to the arithmetic of modular forms --- the point being
that just as the arithmetic of cusp forms of weight $2$ on a
congruence subgroup of $\SL(2,\ZZ)$ is controlled by the Jacobian of
the associated complete modular curve, so the arithmetic of the space
of holomorphic modular forms on the same group is controlled by a
suitable generalized Jacobian. Raynaud's results have been used
extensively to study the arithmetic of cusp forms of weight $2$ and
their associated Galois representations --- for example, in
\cites{Maz77,MW84,Rib88,Rib90}. In future work we plan to give
arithmetic applications of the results obtained here. We note that
generalized modular Jacobians with cuspidal modulus are
considered in Gross \cite{Gros12},  Yamazaki and Yang
\cite{YY16}, Bruinier and Li \cite{BL16}, Wei and Yamazaki
\cite{WY19}, and Iranzo \cite{Ira21}.
Another point of view, using $1$-motives rather than generalized
Jacobians (see also Section \ref{sec:suzuki} below), has been
investigated by Lecouturier \cite{Lec21}.

Before describing our main results, we briefly recall from \cite{Ray70}
the results of Raynaud on Jacobians. To simplify the discussion, we
assume for the rest of this introduction that $R$ is Henselian, $k$ is
algebraically closed, and that the greatest common divisor of the
multiplicities of the irreducible components of the fibre $\cX_s$
at the closed point $s=\Spec k$ is $1$. (We review Raynaud's theory in \S2.2--3
below in greater detail and under less restrictive hypotheses.)  Under
these hypotheses, \cite[(8.2.1)]{Ray70} shows that $P$ is represented by
a smooth group scheme over $S$, and there is a canonical morphism of
group schemes $\deg\colon P \to \ZZ$, which maps a line bundle to its
total degree along the fibres of $\cX/S$. The open and closed subgroup
scheme $P'=\ker(\deg)$ then has $J$ as its generic fibre.

Let $r$ be the number of irreducible components of $\cX_s$. If $r>1$
then $P$ is not separated over $S$. Indeed, if $Y\subset \cX_s^\red$
is an irreducible component, viewed as a reduced divisor on $\cX$,
then the line bundle $\cO_\cX(Y)$ represents an element of $P'(S)$,
nonzero if $r>1$, whose image in $P'(\F)$ vanishes. The closure
$E\subset P'$ of the zero section is then an \etale\ (but not separated)
$S$-group scheme, whose generic fibre is trivial, and whose special
fibre is isomorphic to $\ZZ^{r-1}$, generated by the classes of the
bundles $\cO_\cX(Y)$ restricted to $\cX_s$. Raynaud shows:
\begin{enumerate}[i)]
\item The maximal separated quotient $P'/E$ is the N\'eron model
  $\cJ$ of $J$.
\item The identity component $\cJ_s^0$ of the special fibre of
  $\cJ$ is canonically isomorphic to the Picard scheme
  $\Pic_{\cX_s/k}^0$.
\item Let $\cJ_s^{0,\lin}$ be the maximal connected affine subgroup
  scheme of $\cJ_s^0$.  Its character group
  $\ch{J_s} \defeq \Hom_k(\cJ_s^{0,\lin},\Gm)$ is canonically isomorphic to
  $H_1(\Gtilde_{\cX_s},\ZZ)$, the integral homology of the extended
  dual graph $\Gtilde_{\cX_s}$ of the singular curve $\cX_s$ (we
  recall the definition in \S\ref{sec:singular} below).
\item The component group $\comp{J}\defeq \cJ_s/\cJ_s^0$ is
  canonically isomorphic to the homology of the complex
  \[
    \ZZ[C] \to \ZZ^C \to \ZZ
  \]
  where $C$ is the set of irreducible components $Y\subset
  \cX_s^\red$, the first map is given by the intersection pairing
  $C\times C \to \ZZ$ on $\cX$, and the second by $(m_Y)_Y\mapsto
  \sum_Y \delta_Ym_Y$, where $\delta_Y$ is the multiplicity of $Y$ in
  the fibre.
\end{enumerate}
In the special case where $\cX_s$ is a reduced
divisor on $\cX$ with normal crossings, iii) and iv) become:
\begin{itemize}
\item[iii${}'$)] $\Hom_k(\cJ_s^{0,\lin},\Gm)\simeq H_1(\Gamma_{\cX_s},\ZZ)$, where
   $\Gamma_{\cX_s}$ is the reduced dual graph of $\cX_s$, whose
   vertex set is $C$ and edge set is $\cX_s^\sing$.
 \item[iv${}'$)]
   $\comp{J}\simeq \coker\left(\laplace\colon \ZZ[C] \to
     \ZZ[C]\zero \right)$, where $\laplace=\laplace_0$ is the
   $0$-Laplacian (as in \cite{JRS22})  of the graph
   $\Gamma_{\cX_s}$, which is the endomorphism of $\ZZ[C]$ taking
   a vertex $v\in C$ to $\sum (v)-(v')$,  
where the sum is taken over all edges joining $v$ to an adjacent vertex $v'$.
\end{itemize}

Now let $\fm$ be a modulus (effective divisor) on $X$. Then one has
\cites{Ros54,Ser84}
the generalized Jacobian $J_\fm$ of $X$ relative to $\fm$, which is an
extension of $J$ by a commutative connected linear group $H$. Assume
that $\fm=\sum_{i\in I}(x_i)$ is a sum of distinct points, whose
residue fields $\F_i$ are all separable over $\F$. This is equivalent
to assuming that $H$ is a torus. Then by the results of Raynaud
\cite[Chapter 10]{BLR90}, $J_\fm$ has a
N\'eron model $\cJ_\fm$, which is a smooth separated group scheme over
$S$, not necessarily of finite type, with generic fibre $J_\fm$ and
satisfying the N\'eron universal property. (In the terminology of
\cite{BLR90}, $\cJ_\fm$ is an lft-N\'eron model.) We obtain results
analogous to (i)--(iv${}'$) for $\cJ_\fm$. Specifically, let $R_i$ be the
integral closure of $R$ in $\F_i$, and $\Sigma_s$ be the disjoint
union of the $\Spec( R_i\otimes_Rk)$, $i\in I$. The inclusion of the set of
points $x_i$ in $X$ gives a morphism $\Sigma_s\to\cX_s$. We show:
\begin{enumerate}[i)]
\item There exists a smooth $S$-group scheme $P_\fm$, parametrizing
  equivalence classes of line bundles on $\cX$ with a trivialisation
  at each $x_i$, and $\cJ_\fm$ is the maximal separated quotient of
  $P_\fm'=\ker\left(\deg\colon P_\fm \to \ZZ\right)$ (Theorems \ref{thm:Pm_eq_Fstar}
  and \ref{thm:main}).
\item The identity component $\cJ_{\fm,s}^0$ of the special fibre of
  $\cJ_\fm$ is canonically isomorphic to $\Pic_{(\cX_s,\Sigma_s)/k}^0$,
  the generalized Picard scheme classifying line bundles on $\cX_s$
  of degree zero on each irreducible component, together with a
  trivialisation of the pullback to $\Sigma_s$
  (Corollary \ref{cor:charGenJac}(\ref{cor:charGenJac1})).
\item The character group $\Hom_k(\cJ_{\fm,s}^{0,\lin},\Gm)$ is the integral
  homology of an extended graph $\Gtilde_{\cX_s,\Sigma}$, depending only
  on the combinatorics of the components of $\cX_s$ and the reductions
  $\bar x_i\in\cX_s$ of the points $x_i$
  (Corollary \ref{cor:charGenJac}(\ref{cor:charGenJac1})).
\item The component group $\comp{J_\fm}=\cJ_{\fm,s}/\cJ_{\fm.s}^0$,
  which is an abelian group of finite type (not necessarily finite), is
  isomorphic to the homology of the complex \eqref{eq:raynaud2}
  \[
    \ZZ[C] \oplus \ZZ \to \ZZ^C \oplus \ZZ^I \to \ZZ 
  \]
  (Theorem \ref{thm:phi_m}).
\end{enumerate}
If $\cX_s$ is a reduced divisor with normal crossings, and the points
$x_i$ are $\F$-rational, then the character and component groups have
simple descriptions in terms of the homology and Laplacian of a
generalized reduced dual graph (Corollary \ref{cor:semistable}).

We then apply these results to a modular curve $X_0(N)$ and a modulus $\fm$
supported on the cusps. If $p>3$ is a prime exactly dividing $N$,
we compute the character and component groups, together with the
action of the Hecke operators on them. In particular, if $N=p$ and
$\fm=(\infty)+(0)$ is the sum of the two cusps of $X_0(p)$, then the
component group is infinite cyclic, with $T_\ell$ acting by $\ell+1$
for $\ell\ne p$, and the representation of the full Hecke algebra on the
character group is given by the classical Brandt matrices. We also compute
the component group for $N=p^2$, which for the full cuspidal modulus
is free of rank $2$.

There has been considerable interest in ``Jacobians of graphs''
--- for example, Lorenzini \cites{Lor89,Lor91}, Bacher--de la
Harpe--Nagnibeda \cite{BdlHN97} and Baker--Norine \cite{BN07}. Our
results here on $\comp{J_\fm}$ suggest that there is also a theory of
``generalized Jacobians of graphs''. We investigate this in the paper
\cite{JRS22}.

Let us briefly describe the contents of the rest of the paper. In
Section 1, we prove our results on N\'eron models of generalized
Jacobians. Although not needed for the applications we have in mind,
we decided to work in a very general setting (in particular, there are
no conditions imposed on the base discrete valuation ring). Sections
\ref{sec:stuff}, \ref{sec:singular} and \ref{sec:raynaud} review
well-known facts about N\'eron models, Weil restriction, and Picard
schemes of singular curves, as well as some of Raynaud's results from
\cite{Ray70}.

In \S\ref{sec:singular2} and \ref{sec:functoriality1} we describe the
structure of the generalized Picard scheme of a singular curve with
respect to a modulus, and discuss its functoriality. The main
results on the N\'eron models of generalized Jacobian are contained in
\S\ref{sec:hard-bit}. In the following two sections we explain the
relation with $1$-motives, and describe some of the behaviour of the
N\'eron model of $J_\fm$ under correspondences.

In Section 2 we apply our results to the modular curves $X_0(N)$
and cuspidal moduli, computing in several cases the component and
characters groups of the reduction of the N\'eron model modulo a prime
$p>3$. 

We describe some prior work on these topics. If the points $(x_i)$ are
$F$-rational and their closures in $\cX$ are disjoint, then by
identifying them, one obtains a singular relative curve $\cX/\fm$
which is semifactorial. Some of our results in this case are then
subsumed by the works \cites{Ore17a,Ore17b,Pep13} on Picard schemes of
semifactorial curves. In \cite{Ove21}, Overkamp proves general results
on the existence of N\'eron models of Picard schemes of singular
curves. Finally,
Suzuki \cite{Suz19} has defined N\'eron models of $1$-motives and studied
their duality properties and component groups. We discuss its relation
with the present work in Section \ref{sec:suzuki}.

\subsection*{Notation}

Throughout the paper, unless otherwise stated, $R$ will denote a
discrete valuation ring with field of fractions $\F$, uniformiser
$\varpi$, and residue field $k$. Except where stated otherwise, we
make no further hypotheses on $R$ or $k$. We write
$p = \max(1, \mathrm{char}(k))$ for the characteristic exponent of
$k$. We put $S = \Spec R$ and denote by $s$ its closed point. Let
$\Rsh$ be a strict henselisation of $R$, and $\Fsh$ its field of
fractions. Write $\ks$ for the residue field of $\Rsh$ (a separable
closure of $k$), and $\bar s$ for its spectrum. We write $(Sm/S)$ for
the category of essentially smooth $S$-schemes, and $(Sm/S)_\et$ for
its \etale\ site. For a scheme $X$, we write $\kappa(x)$ for the
residue field at a point $x\in X$, and if $X$ is irreducible,
$\kappa(X)$ for the residue field of the generic point of $X$.  All
group schemes considered in this paper will be commutative. We
frequently identify \etale\ group schemes over a field with their
associated Galois modules.

If $S$ is a finite set we write $\ZZ[S]$ for the free abelian group on
$S$ and $\ZZ[S]\zero$ for the kernel of the degree map $\ZZ[S] \to \ZZ$,
$s \mapsto 1$ for $s\in S$.

\section{N\'eron models of generalized Jacobians}

\subsection{Preliminaries}
\label{sec:stuff}

In this section we collect together properties of N\'eron models and
Weil restriction of scalars. Most of these may be found in \cite{BLR90},
especially Chapter 10.

Recall that if $G/F$ is a smooth group scheme of finite type, then a
N\'eron model for $G$ is a smooth separated group scheme $\cG/S$ with
generic fibre $G$, such that for every smooth $S$-scheme $S'$, the canonical map
$\cG(S') \to \cG(S'_F)=G(S'_F)$ is bijective. If $\cG$ exists, it is unique
up to unique isomorphism. (In \cite{BLR90} these are called N\'eron
lft-models.) 
The identity component $\cG^0$ of $\cG$ is a smooth group
scheme of finite type. The formation of N\'eron models commutes with
strict henselisation and completion of the base ring $R$. If
$G \otimes_\F \Fhatsh$ does not contain a copy of
$\mathbb{G}_{\mathrm a}$ then $G$ has a N\'eron model
\cite[10.2 Thm.2]{BLR90}. (More generally, this holds if $S$ is merely a
semilocal Dedekind scheme.)  We write $\comp{G}$ for the component group
$(\cG_s/\cG_s^0)(\ks)$. If $k$ is perfect, then by Chevalley's Theorem
\cite{Con} $\cG_s$ has a unique
maximal connected affine smooth subgroup scheme $\cG_s^{0,\lin}$, and we then
write $\ch{G}$ for the character group
$\Hom(\cG_s^{0,\lin}\otimes_k\kbar,\Gm)$, a finite free $\ZZ$-module with
a continuous action of $\Gal(\kbar/k)$.

Let $0 \to G_1 \to G_2 \to G_3 \to 0$ be an exact sequence of smooth
connected $F$-groups which have N\'eron models $\cG_i$. Consider the
complexes
\begin{gather}
  0 \to \cG_1 \to \cG_2 \to \cG_3 \to 0 \label{eq:neron-exact}\\
  0 \to \cG_1^0 \to \cG_2^0 \to \cG_3^0 \to 0\label{eq:neron0-exact}\\
  0 \to \comp{G_1} \to \comp{G_2} \to \comp{G_3} \to 0\label{eq:phi-exact}
\end{gather}
The following two exactness results are a restatement of
\cite[Remark (4.8)(a)]{Cha00}, with the same proof, which we give for the reader's
convenience.

\begin{lem}
\label{lem:neron-exact}
  Suppose that the induced map $\cG_2 \to \cG_3$ is a surjection of
  sheaves for the smooth topology. Then:
\begin{enumerate}[\upshape (a)]
\item
\label{lem:neron-exact1}
The sequence \eqref{eq:neron-exact} is exact.
\item 
\label{lem:neron-exact2}
If $\comp{G_1}$ is torsion-free, then the sequences
  \eqref{eq:neron0-exact} and \eqref{eq:phi-exact} are exact.
\end{enumerate}
\end{lem}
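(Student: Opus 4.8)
The plan is to obtain (a) directly from the N\'eron mapping property, and then for (b) to localise at the closed point and reduce to a statement about finite component groups.

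\emph{Part (a).} Write $q\colon\cG_2\to\cG_3$. Since $\cG_3$ is separated, $\mathcal K:=\ker q$ is a closed $S$-subgroup scheme of $\cG_2$, with generic fibre $\ker(G_2\to G_3)=G_1$. For every smooth $S$-scheme $S'$ the N\'eron mapping property gives $\cG_j(S')=G_j(S'_\F)$, and taking kernels along the left exact sequence $0\to G_2(S'_\F)\to G_3(S'_\F)$ shows $\mathcal K(S')=G_1(S'_\F)$; thus $\mathcal K$ has the N\'eron mapping property. What remains is to see that $\mathcal K$ is $S$-smooth, and here the hypothesis is used in the form stated --- that $q$ is an epimorphism for the \emph{smooth} topology, not merely the fppf one. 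Indeed $q$ then admits a section over some \'etale surjective $Y\to\cG_3$, and translating this section gives an isomorphism $\cG_2\times_{\cG_3}Y\isom\mathcal K\times_SY$ of $Y$-schemes; the left-hand side is \'etale over $\cG_2$, hence $S$-smooth, so $\mathcal K\times_SY$ is $S$-smooth, whence $\mathcal K$ is $S$-smooth by fppf descent along $\mathcal K\times_SY\to\mathcal K$. Therefore $\mathcal K=\cG_1$, and the same computation shows $q$ is a smooth surjective morphism; combined with its surjectivity this gives exactness of \eqref{eq:neron-exact}, with $\cG_1\hookrightarrow\cG_2$ a closed immersion.

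\emph{Part (b).} Base-changing the smooth surjection $q$ with kernel $\cG_1$ along $\bar s\to S$ gives a smooth surjective homomorphism $q_{\bar s}\colon H_2\to H_3$ of $\ks$-group schemes, $H_2,H_3$ smooth, with kernel $H_1$, where $H_j:=\cG_{j,\bar s}$. Since $q_{\bar s}$ is an open map, $q_{\bar s}(H_2^0)$ is an open --- hence also closed --- subgroup of $H_3$ containing the identity, so it contains $H_3^0$; being connected it is contained in $H_3^0$, so equals it. Hence $q_{\bar s}$ restricts to a smooth surjection $H_2^0\to H_3^0$ with kernel $K:=H_1\cap H_2^0$, giving an exact sequence $0\to K\to H_2^0\to H_3^0\to0$. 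The crux is to show $K=H_1^0$. On one hand, $H_1^0$ is open in $H_1$ and connected, hence is the identity component of $K$, so $\pi_0(K)=K/H_1^0$ is \'etale over $\ks$. On the other hand, $K=\ker(H_2^0\to H_3^0)$ is a closed subgroup scheme of $H_2^0$, and $H_2^0$ is of finite type over $\ks$ because the identity component of a N\'eron model is of finite type; hence $K$, and with it $\pi_0(K)$, is of finite type, so $\pi_0(K)$ is a \emph{finite} \'etale $\ks$-group scheme. But $\pi_0(K)=(H_1\cap H_2^0)/H_1^0=\ker(\pi_0(H_1)\to\pi_0(H_2))$, whose group of $\ks$-points is $\ker(\comp{G_1}\to\comp{G_2})$, a subgroup of the torsion-free group $\comp{G_1}$; a finite subgroup of a torsion-free group is trivial, so $\pi_0(K)=0$ and $K=H_1^0$.

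Consequently $0\to H_1^0\to H_2^0\to H_3^0\to0$ is exact. Mapping it into $0\to H_1\to H_2\to H_3\to0$ via the inclusions of identity components and applying the snake lemma --- whose connecting map vanishes, these inclusions being monomorphisms --- yields exactness of $0\to\pi_0(H_1)\to\pi_0(H_2)\to\pi_0(H_3)\to0$; passing to $\ks$-points (an exact operation on \'etale $\ks$-group schemes) and keeping track of the $\Gal(\ks/k)$-action gives the exactness of \eqref{eq:phi-exact}. For \eqref{eq:neron0-exact} one promotes the fibrewise exactness to a statement over $S$: $\cG_1^0\to\cG_2^0$ is a monomorphism, hence a closed immersion, being one on both fibres; $\cG_2^0\to\cG_3^0$ is flat by the fibrewise flatness criterion and has smooth surjective fibres, hence is a smooth epimorphism; and its kernel equals $\cG_1^0$, the two being $S$-flat closed subschemes of $\cG_2^0$ that agree fibrewise. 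The one essential step is the finiteness argument of the preceding paragraph, which forces $\ker(\comp{G_1}\to\comp{G_2})$ to be simultaneously finite and torsion-free; everything else is an assembly of standard facts about N\'eron models, fppf descent, and the fibrewise criteria.
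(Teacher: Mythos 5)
Your proof is correct and follows essentially the same route as the paper: in (a) you identify $\ker(\cG_2\to\cG_3)$ with $\cG_1$ via its smoothness and the N\'eron universal property (you reprove the smoothness of the kernel by descent along a section of $q$, where the paper simply cites Liu--Lorenzini, Lemma 4.3(b)), and in (b) the decisive step is identical — $\cG_1\cap\cG_2^0$ contains $\cG_1^0$ with finite index on the special fibre because $\cG_2^0$ is of finite type, and torsion-freeness of $\comp{G_1}$ forces equality. The only difference is presentational: you argue fibrewise over $\bar s$ and then promote back to $S$, whereas the paper works with the open subgroup schemes $\cG_i^0$ directly over $S$.
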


\begin{proof} 
  (\ref{lem:neron-exact1}) Since locally for the smooth topology the morphism
  $\cG_2 \to \cG_3$ of group schemes has a section, it is evidently
  surjective. Let $\cG'$ denote its kernel. By \cite[Lemma 4.3(b)]{LL01},
  $\cG'$ is smooth. The canonical morphism
  $\cG_1 \to \cG_2$ factors though a morphism
  $\gamma \colon \cG_1 \to \cG'$ which is the identity on generic
  fibres, and since $\cG_1$ is a N\'eron model, there is a morphism
  $\delta\colon \cG' \to \cG_1$ which is the identity on generic
  fibres. As $\cG_1$ and $\cG'$ are separated over $S$, $\gamma$ and
  $\delta$ are mutually inverse isomorphisms.

  (\ref{lem:neron-exact2}) The map $\cG_2^0 \to \cG_3^0$ is surjective, so we have an
  exact sequence
  \[
    0 \to \cG_1 \cap \cG_2^0 \to \cG_2^0 \to \cG_3^0 \to 0
  \]
  in which each term is of finite type over $S$. Hence $\cG_{1,s}^0$
  has finite index in $\cG_{1,s} \cap \cG_{2,s}^0$, and since
  $\comp{G_1}$ is torsion-free we have $\cG_1 \cap \cG_2^0 =
  \cG_1^0$. So \eqref{eq:neron0-exact} and therefore also
  \eqref{eq:phi-exact} are exact.
\end{proof}

\begin{cor}
  \label{cor:tori-exact}
  Suppose that $G_1$ is a product of tori of the form
  $\R_{\F'/F} T$, where $\F'/F$ is finite separable, $T$ is an
  $\F'$-torus which splits over an unramified extension, and
  $\R_{\F'/\F}$ is Weil restriction of scalars. Then
  \eqref{eq:neron-exact}, \eqref{eq:neron0-exact} and
  \eqref{eq:phi-exact} are exact.
\end{cor}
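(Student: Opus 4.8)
The plan is to deduce Corollary \ref{cor:tori-exact} from Lemma \ref{lem:neron-exact} by verifying its two hypotheses for the special class of $\cG_1$ in question: first, that the map $\cG_2 \to \cG_3$ is a surjection of sheaves for the smooth topology; and second, that $\comp{G_1}$ is torsion-free. Since $G_1 \to G_2 \to G_3$ is exact on generic fibres, $G_2 \to G_3$ is a smooth surjection of $F$-schemes; smoothness is preserved and hence $G_2 \to G_3$ admits a section smooth-locally over $F$, which extends over $S$ because the target of a smooth $S$-scheme point lands in the N\'eron model by the defining property. More directly, one can argue that smooth surjectivity over $S$ follows from surjectivity over the generic fibre together with the fact that both $\cG_2$ and $\cG_3$ are smooth over $S$, so a lift of any point of $\cG_3$ over a strictly henselian local ring exists after a smooth base change. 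So the first hypothesis will be checked by a short argument using smoothness of the $\cG_i$ and the N\'eron mapping property.

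The substance of the Corollary is the second hypothesis: $\comp{G_1}$ is torsion-free when $G_1 = \prod_j \R_{\F'_j/\F} T_j$ with each $T_j$ an $\F'_j$-torus split over an unramified extension. First I would reduce to a single factor, since component groups of N\'eron models are compatible with products. For a single torus of the form $\R_{\F'/\F} T$, I would use the standard description of the N\'eron lft-model of an induced torus: by compatibility of N\'eron models with Weil restriction along the finite separable (hence \'etale after base change to $\Ssh$) extension $\F'/\F$, the N\'eron model of $\R_{\F'/\F}T$ is the Weil restriction $\R_{R'/R}$ of the N\'eron model of $T$ over the integral closure $R'$ of $R$ in $\F'$. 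Since $T$ splits over an unramified extension, $T$ becomes a split torus after an unramified base change, so its N\'eron model over $R'$ (equivalently, after passing to $\Rsh$) is a split torus $\Gm^n$, whose N\'eron lft-model has component group $\ZZ^n$ — in particular torsion-free. Weil restriction along the finite \'etale cover $\Spec R' \to \Spec R$ of a torus with torsion-free component group again has torsion-free component group: on special fibres, the component group of $\R_{k'/k}\Gm^n$ over $\ks$ is a permutation module on $\ZZ^n$, hence free. Thus $\comp{G_1}$ is a finite direct sum of (twisted) permutation lattices and is torsion-free.

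With both hypotheses of Lemma \ref{lem:neron-exact} verified, parts (\ref{lem:neron-exact1}) and (\ref{lem:neron-exact2}) immediately give exactness of all three sequences \eqref{eq:neron-exact}, \eqref{eq:neron0-exact}, \eqref{eq:phi-exact}, which is the assertion of the Corollary.

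I expect the main obstacle to be pinning down precisely, with correct references, that the N\'eron lft-model of an induced torus $\R_{\F'/\F}T$ is computed by Weil-restricting the N\'eron model of $T$ — i.e., that $\Ner$ commutes with Weil restriction along a finite separable extension — and then identifying the resulting special fibre's component group as a permutation lattice. This is where the hypotheses "$\F'/\F$ finite separable" and "$T$ splits over an unramified extension" are actually used, and it is the only nonformal input; everything else is a direct invocation of Lemma \ref{lem:neron-exact}. The compatibility with Weil restriction is in \cite[\S10.1]{BLR90}, and the explicit shape of the component group of a N\'eron lft-model of a torus (a permutation-type lattice, in particular torsion-free) can be extracted from the same chapter.
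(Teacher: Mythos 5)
Your overall strategy --- verify the two hypotheses of Lemma \ref{lem:neron-exact} --- is the right one, and your treatment of the second hypothesis (torsion-freeness of $\comp{G_1}$) is essentially the paper's: after passing to $\Rsh$ each factor becomes a product of copies of $\R_{\F'/\F}\Gm$, whose component group is $\ZZ^I$ by Proposition \ref{prop:tori-neron}(a). (One small inaccuracy there: $R'/R$ need not be \etale\ when $\F'/\F$ is merely separable --- it can be ramified --- so the special fibre of $\R_{S'/S}\cGm$ is $\R_{R'\otimes k/k}\Gm\times\R_{R'\otimes k/k}\ZZ$ with $R'\otimes k$ an Artin local ring; the conclusion $\comp{\R_{\F'/\F}\Gm}\simeq\ZZ^I$ still holds, but not because $\Spec R'\to\Spec R$ is an \etale\ cover.)

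The genuine gap is in the first hypothesis. You assert that surjectivity of $\cG_2\to\cG_3$ as a map of sheaves for the smooth topology ``follows from surjectivity over the generic fibre together with the fact that both $\cG_2$ and $\cG_3$ are smooth over $S$,'' via a short argument with the N\'eron mapping property. This is false, and it is exactly why Lemma \ref{lem:neron-exact} carries that surjectivity as a hypothesis rather than deriving it. Given a strictly henselian local ring $A$, essentially smooth over $R$, and a point $x\in\cG_3(A)=G_3(A_\F)$, the obstruction to lifting $x$ to $G_2(A_\F)$ is a class in $H^1(A_\F,G_1)$; a smooth cover of $\Spec A$ only induces smooth covers of $\Spec A_\F$ of a very restricted kind, and there is no formal reason this class dies after such a cover. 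Equivalently, what is needed is the vanishing $R^1j_{\sm\,*}\cG_1=0$ on the small smooth site of $S$, and this is a nontrivial theorem of Bosch--Xarles and Chai (\cite[4.2]{BX96}, \cite[(4.5)]{Cha00}) valid precisely for tori of the induced form $\R_{\F'/\F}T$ with $T$ split over an unramified extension --- it fails in general, for instance for anisotropic tori not split by unramified extensions and for abelian varieties, which is why exactness of N\'eron models is a delicate matter at all. So the hypothesis on $G_1$ is used twice, and the step you identified as ``the only nonformal input'' (torsion-freeness of the component group) is in fact the easier of the two; the smooth-sheaf surjectivity is the step that cannot be made formal and must be quoted from the literature.
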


\begin{proof}
  Replacing $R$ by $\Rsh$, we may assume that each $T/\F'$ is
  split. According to \cite[4.2]{BX96}, \cite[(4.5)]{Cha00}, one then has
  $R^1j_{\sm\,*} G_1 = 0$, where
  $j_\sm \colon (\Spec \F )_\sm \to S_\sm$ is the inclusion of small
  smooth sites. Therefore $\cG_2 \to \cG_3$ is surjective as a map of
  sheaves on $S_\sm$ . By Proposition \ref{prop:tori-neron}(a) below,
  $\comp{G_1}$ is torsion-free, so everything follows from the lemma.
\end{proof}

We will need the following minor generalization of a result from
\cite{BLR90}.

\begin{prop}
  \label{prop:converse-to-neron-exact}
  Let
  \[
    0 \to \cG_1 \to \cG_2 \to \cG_3 \to 0
  \]
  be an exact sequence of smooth $S$-group schemes. If $\cG_1$ and
  $\cG_3$ are the N\'eron models of their generic fibres, the same is
  true for $\cG_2$.
\end{prop}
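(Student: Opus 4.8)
The plan is first to show that $\cG_2$ is separated, and then that it has the N\'eron mapping property. For separatedness: as $\cG_3$ is separated its unit section $S\to\cG_3$ is a closed immersion, so its pullback along $\cG_2\to\cG_3$, which is the inclusion $\cG_1\hookrightarrow\cG_2$ of the kernel, is also a closed immersion; and as $\cG_1$ is separated its own unit section is a closed immersion, so the composite $S\to\cG_1\to\cG_2$, which is the unit section of $\cG_2$, is a closed immersion, whence $\cG_2$ is separated. Writing $G_i=\cG_{i,F}$, it then remains to check that $\cG_2(S')\to G_2(S'_F)$ is bijective for every smooth $S$-scheme $S'$. Injectivity is immediate: $S'$ is flat and reduced over $S$, so $S'_F$ is schematically dense in $S'$, and two $S'$-points of the separated scheme $\cG_2$ agreeing over $S'_F$ coincide.

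For surjectivity I would pass to cohomology. The map $\cG_2\to\cG_3$ exhibits $\cG_2$ as a torsor under $\cG_1$ over $\cG_3$, so it is smooth and surjective and admits sections \'etale-locally; hence $0\to\cG_1\to\cG_2\to\cG_3\to0$ is exact as a sequence of \'etale sheaves over both $S'$ and $S'_F$. Using the N\'eron property of $\cG_1$ and $\cG_3$ to identify $\cG_i(S')=G_i(S'_F)$ for $i=1,3$, the associated long exact sequences and restriction maps give a commutative ladder with exact rows
\[
  \cG_1(S')\to\cG_2(S')\to\cG_3(S')\xrightarrow{\ \partial\ }H^1_\et(S',\cG_1)
\]
over
\[
  G_1(S'_F)\to G_2(S'_F)\to G_3(S'_F)\xrightarrow{\ \partial\ }H^1_\et(S'_F,G_1)
\]
whose first and third vertical maps are bijections. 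Given $g\in G_2(S'_F)$, let $\tilde g\in\cG_3(S')$ be the unique element restricting to the image of $g$ in $G_3(S'_F)$; that image lifts to $g$, so $\partial(\tilde g)$ dies in $H^1_\et(S'_F,G_1)$. Granting that $H^1_\et(S',\cG_1)\to H^1_\et(S'_F,G_1)$ has trivial kernel, we conclude $\partial(\tilde g)=0$, so $\tilde g$ lifts to some $h\in\cG_2(S')$; then $g\cdot(h|_{S'_F})^{-1}$ maps to the identity in $G_3(S'_F)$, hence equals the image of a unique $a\in G_1(S'_F)=\cG_1(S')$, and $h$ times the image of $a$ in $\cG_2(S')$ restricts to $g$. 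Thus the proposition reduces to the one genuinely non-formal point: \emph{every $\cG_1$-torsor $P$ over a smooth $S$-scheme $S'$ with $P(S'_F)\ne\emptyset$ is trivial.}

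I would prove this directly from the N\'eron property of $\cG_1$. Choose an \'etale cover $U\to S'$ trivializing $P$ together with an isomorphism of torsors $\psi\colon P|_U\isom\cG_{1,U}$; then $P$ is recovered from the trivial torsor on $U$ by a descent datum along $U'=U\times_{S'}U$ which, transported through $\psi$, is translation by a cocycle $c\in\cG_1(U')$, and $P$ is trivial over $S'$ exactly when $c$ is the coboundary of some element of $\cG_1(U)$. Here $U$ and $U'$ are smooth over $S$, being \'etale over smooth $S$-schemes. A section $x\in P(S'_F)$ yields, after restriction and transport through $\psi$, an element $x_U\in\cG_1(U_F)$ whose coboundary equals $c|_{U'_F}$, and by the N\'eron property of $\cG_1$ applied with base scheme $U$, $x_U$ extends uniquely to $\widehat x_U\in\cG_1(U)$. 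Now $c$ and the coboundary of $\widehat x_U$ are two elements of $\cG_1(U')$ that agree after restriction to the schematically dense open $U'_F$, hence are equal because $\cG_1$ is separated; thus $c$ is the coboundary of $\widehat x_U$, so $\widehat x_U$ descends to a section of $P$ over $S'$. The main obstacle is precisely isolating this torsor-triviality statement and recognizing that it follows purely from the defining N\'eron property of $\cG_1$; once that is in place the rest is a formal diagram chase, and — crucially for the intended application — no finiteness hypothesis on any $\cG_i$ has entered anywhere.
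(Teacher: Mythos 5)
Your proof is correct, and it takes a genuinely different route from the one the paper points to. The paper disposes of this proposition in one line by invoking the argument of \cite[\S7.5, Prop.~1(b)]{BLR90} together with the criterion of \cite[\S10.1, Prop.~2]{BLR90}: after checking that $\cG_2$ is smooth and separated (exactly as you do), that criterion reduces the N\'eron property to surjectivity of $\cG_2(\Rsh)\to G_2(\Fsh)$, and over the strictly henselian base the lifting through the smooth surjection $\cG_2\to\cG_3$ is immediate, since a smooth surjective morphism onto the spectrum of a strictly henselian local ring admits a section through any prescribed point of the closed fibre. You instead verify the defining universal property for an arbitrary smooth test scheme $S'$ head-on. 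The skeleton is the same --- extend the image in $\cG_3$, lift, correct by an element of $\cG_1$ --- but the one non-formal step, namely that a $\cG_1$-torsor over $S'$ with a section over $S'_\F$ is trivial, you prove from scratch by a cocycle computation, applying the lft-N\'eron property of $\cG_1$ to the smooth $S$-schemes $U$ and $U\times_{S'}U$ and using schematic density of the generic fibre together with separatedness. Your route buys self-containedness (no appeal to the nontrivial criterion 10.1/2) and makes visible exactly where the hypothesis on $\cG_1$ enters; it also manifestly uses only the lft form of the N\'eron property with no finiteness assumptions, which matters for the paper's application, where $\cG_1=\cT_\fm$ is not of finite type. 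The BLR route buys brevity, at the price of outsourcing the real work to the criterion. Both arguments are valid.
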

\begin{proof}
  This follows by the same argument as in the proof of \S7.5,
  Proposition 1(b) in \cite{BLR90} (middle of p.185), using the
  criterion of \S10.1, Proposition 2.
\end{proof}

From \cite[§7.6]{BLR90} we recall basic properties of Weil
restriction. Let $Z'/Z$ be a finite flat morphism of finite
presentation. If $Y$ is a quasiprojective $Z'$-scheme, then the Weil
restriction $\R_{Z'/Z} Y$ exists, and is characterised by its functor
of points $\R_{Z'/Z}Y(-) = Y(- \times_Z Z')$. If $Y$ is smooth over
$Z'$ then $\R_{Z'/Z} Y$ is smooth over $Z$. If $Y \to X$ is a closed
immersion of quasiprojective $Z'$-schemes, then
$\R_{Z'/Z} Y \to \R_{Z'/Z} X$ is a closed immersion. If $Z' \to Z$ is
surjective and $Y$ is a quasiprojective $Z$-scheme, then the canonical
map $Y \to \R_{Z'/Z} (Y \times_Z Z')$ is a closed immersion.

Now let $k$ be a field, $k'$ a finite $k$-algebra, and $k''$ a finite flat $k'$-algebra. Let $Y$
be a quasiprojective $k$-scheme. There is then a canonical map
\[
g \colon \R_{k'/k} (Y \otimes_k k' ) \to \R_{k''/k} (Y \otimes_k k'').
\]
We may write $k' = k_1' \times k_2'$ where
$\Spec k_1'\subset \Spec k'$ is the image of $\Spec k''$ (and $k_2'$
is possibly zero). The morphism $g$ then factors\mpa
\begin{multline*}
\R_{k'/k} (Y \otimes_k k' ) = \R_{k_1'/k} (Y \otimes_k k_1' )
\times_{\Spec k} \R_{k_2'/k} (Y \otimes_k k_2' ) \xrightarrow{\pr_1} \\
\R_{k_1'/k} (Y \otimes_k k_1' ) \to \R_{k_1'/k} \R_{k''/k_1'} (Y
\otimes_k k'' ) = \R_{k''/k} (Y \otimes_k k'' )
\end{multline*}
and the second arrow is a closed immersion. In particular, if $Y$ is a smooth
$k$-group, then $g$ is a surjection onto a closed subgroup scheme, and its cokernel
is smooth.

Let $k$ be a field and $k'$ a finite $k$-algebra.  Then $\R_{k'/k}\Gm$ is
a connected smooth $k$-group scheme of finite type. It is a torus if
and only if
$k'/k$ is \etale.

We return to N\'eron models. Recall that the multiplicative group
$\Gm/\F$ has a N\'eron model $\cGm /S$, whose special fibre is
$\Gm \times \ZZ$. It fits into an exact sequence of group schemes
\[
0 \to \Gm \to \cGm \xrightarrow{v_\F} s_*\ZZ \to 0
\]
where on $R$-points $v_\F$ is the normalised valuation
$v_\F \colon \cGm (R) = F^\mul \to\mkern-15mu\to\ZZ$.

Let $\F'$ be a finite \etale\ $\F$-algebra, $R' \subset \F'$ the
normalisation of $R$ in $\F'$, $S' = \Spec R'$. Let
$\F' \otimes_\F \Fsh = \prod_{i\in I}
\F_i$, where the fields $\F_i$ are totally ramified extensions of
$\Fsh$, of degrees $e_i p^{s_i}$, where $p^{s_i}$ is the degree of the
(purely inseparable) residue class extension.
\begin{prop}
  \label{prop:tori-neron}\mbox{}
  \begin{enumerate}[\upshape (a)] 
  \item The N\'eron model of $\R_{\F'/\F}\Gm$ is $\R_{S'/S}\cGm$, and the product of the valuations\mpa
    \begin{equation} \label{eq:prod-of-vals}
      \R_{S'/S}\cGm(\Fsh)=\prod_{i\in I} \F_i^\mul
      \xrightarrow{(v_{\F_i})} \ZZ^I
    \end{equation}
    induces an isomorphism
    \begin{equation} \label{eq:pi0-of-tori}
      \comp{\R_{\F'/\F}\Gm}=\pi_0((\R_{S'/S}\cGm)_s) \longisom \ZZ^I.
    \end{equation}
  \item The adjunction map $\cGm \to \R_{S'/S}\cGm$ is a closed
    immersion, and its cokernel is the N\'eron model of
    $(\R_{\F'/\F}\Gm)/\Gm$, inducing a isomorphism
    \[
      \comp{(\R_{\F'/\F}\Gm)/\Gm} \longisom \coker(e=(e_i)\colon \ZZ\to\ZZ^I).
    \]
  \end{enumerate}
\end{prop}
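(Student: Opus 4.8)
\emph{Plan of proof.}

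\textbf{(a)} The first step is to identify $\R_{S'/S}\cGm$ with the N\'eron model of $\R_{\F'/\F}\Gm$. By \cite[\S7.6]{BLR90}, Weil restriction along the finite flat morphism $S'\to S$ carries the smooth separated lft $S'$-group scheme $\cGm$ (a N\'eron model over the semilocal Dedekind scheme $S'$) to a smooth separated lft $S$-group scheme with generic fibre $\R_{\F'/\F}\Gm$; its N\'eron property is immediate from the adjunction $\Hom_S(T,\R_{S'/S}\cGm)=\Hom_{S'}(T\times_S S',\cGm)$ and the N\'eron property of $\cGm$ over $S'$ applied to the smooth $S'$-scheme $T\times_S S'$, since $(T\times_S S')_{\F'}=T_\F\times_\F\F'$.

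For the component group I would pass to $\Rsh$ and reduce to the totally ramified case. As the N\'eron model, hence $\comp{\ }$, is unchanged by the base change $\Rsh/R$ (\S\ref{sec:stuff}), and Weil restriction along the finite free morphism $S'\to S$ commutes with base change and with disjoint unions, over $\Rsh$ one has $\R_{S'/S}\cGm=\prod_{i\in I}\R_{S_i/\Ssh}\cGm$ with $S_i=\Spec R_i$, $R_i$ the valuation ring of $\F_i$, so $\comp{\R_{\F'/\F}\Gm}=\prod_i\comp{\R_{\F_i/\Fsh}\Gm}$. Fix $i$, so $S_i/\Ssh$ is finite free and totally ramified. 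The key claim is that $\R_{S_i/\Ssh}\Gm$ is the identity component of $\R_{S_i/\Ssh}\cGm$, with quotient the \'etale group $s_*\ZZ$. Indeed it is an open subgroup scheme, because Weil restriction along a finite locally free morphism preserves open immersions; it has connected fibres, its generic fibre being the torus $\R_{\F_i/\Fsh}\Gm$ and --- Weil restriction along $S_i/\Ssh$ being compatible with arbitrary base change --- its special fibre being $\R_{(R_i/\varpi R_i)/\ks}\Gm$, the unit group scheme of a local Artinian $\ks$-algebra, which is connected; and applying the left exact functor $\R_{S_i/\Ssh}$ to $0\to\Gm\to\cGm\xrightarrow{v_{\F_i}}(s_i)_*\ZZ\to0$ over $S_i$ yields $0\to\R_{S_i/\Ssh}\Gm\to\R_{S_i/\Ssh}\cGm\to s_*\ZZ\to0$, right exactness coming from the surjectivity of $v_{\F_i}\colon\F_i^\mul\to\ZZ$. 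Hence $\comp{\R_{\F_i/\Fsh}\Gm}=(s_*\ZZ)_s\cong\ZZ$ via $v_{\F_i}$, and taking the product over $i$ shows that \eqref{eq:prod-of-vals} induces the isomorphism \eqref{eq:pi0-of-tori}.

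\textbf{(b)} I would deduce this from Corollary \ref{cor:tori-exact}, applied to the exact sequence of $\F$-tori $0\to\Gm\to\R_{\F'/\F}\Gm\to(\R_{\F'/\F}\Gm)/\Gm\to0$; here $\Gm=\R_{\F/\F}\Gm$ has the required shape (with split torus $\Gm$). The corollary makes \eqref{eq:neron-exact}, \eqref{eq:neron0-exact} and \eqref{eq:phi-exact} exact. Exactness of \eqref{eq:neron-exact} exhibits $\cGm$ as the kernel of the morphism $\R_{S'/S}\cGm\to\cG_3$ onto the N\'eron model $\cG_3$ of $(\R_{\F'/\F}\Gm)/\Gm$; since $\cG_3$ is separated over $S$ this kernel is closed, so $\cGm\to\R_{S'/S}\cGm$ is a closed immersion with cokernel $\cG_3$, which is the first assertion of (b). Exactness of \eqref{eq:phi-exact} gives $0\to\comp\Gm\to\comp{\R_{\F'/\F}\Gm}\to\comp{(\R_{\F'/\F}\Gm)/\Gm}\to0$; here $\comp\Gm=\ZZ$, generated by the class of the uniformiser $\varpi$, and $\comp{\R_{\F'/\F}\Gm}=\ZZ^I$ by part (a), and the map between them carries $\varpi$ to the diagonal element $(\varpi)_i\in\prod_i\F_i^\mul$, whose image under $(v_{\F_i})$ is $(v_{\F_i}(\varpi))_i=(e_i)_i$ because $\varpi$ remains a uniformiser in $\Rsh$. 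Thus the map is $e=(e_i)\colon\ZZ\to\ZZ^I$, and $\comp{(\R_{\F'/\F}\Gm)/\Gm}\cong\coker(e)$.

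The hard part is the bookkeeping in (a): marshalling the compatibilities of N\'eron models and Weil restriction with base change so as to reduce to a single totally ramified extension $S_i/\Ssh$, and then checking that $\R_{S_i/\Ssh}\cGm$ has identity component $\R_{S_i/\Ssh}\Gm$ with \'etale quotient $s_*\ZZ$ (the crucial facts being that Weil restriction preserves open immersions and, along a finite locally free morphism, commutes with base change). Once this is in hand, part (b) is formal, being the combination of part (a) with Corollary \ref{cor:tori-exact}.
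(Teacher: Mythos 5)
Your proposal is correct and follows essentially the same route as the paper: for (a) the Néron‑model identification comes from the adjunction (the content of \cite[10.1/Propositions 4 and 6]{BLR90}) and the component group is computed after reducing to a totally ramified extension of $\Rsh$, while (b) is deduced from Corollary \ref{cor:tori-exact} together with the observation that the map $\comp{\Gm}=\ZZ\to\ZZ^I$ sends the class of $\varpi$ to $(v_{\F_i}(\varpi))_i=(e_i)_i$. The only (immaterial) packaging difference is in (a): you obtain $\pi_0$ by Weil‑restricting the valuation sequence $0\to\Gm\to\cGm\to (s_i)_*\ZZ\to 0$ and checking connectedness of the fibres of $\R_{S_i/\Ssh}\Gm$, whereas the paper computes the special fibre directly as $\R_{R'\otimes k/k}\Gm\times\R_{R'\otimes k/k}\ZZ$ (the second factor being $\ZZ$ since $R'\otimes k/k$ is radicial) and cites \cite[1.1/Proposition 7]{BLR90} for the identification via the valuation.
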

Here we use $\cGm$ to denote also the N\'eron model of $\Gm$ over the semilocal base
$S'$.

\begin{proof}
  (a) The first statement follows from \cite{BLR90}, Propositions 10.1/4
  and 6.  For the second, replacing $\F$ by $\Fsh$ we are reduced to
  the case of a totally ramified field extension $\F'/\F$. Then as
  $\cG_{{\mathrm m},s}\simeq \mathbb{G}_{\mathrm m,s}\times \ZZ$, we
  have
  $(\R_{S'/S}\cGm)_s = \R_{R'\otimes k/k}\Gm \times \R_{R'\otimes
    k/k}\ZZ$. As the first factor is connected, and the second is
  $\ZZ$ (since $R'\otimes k/k$ is radicial) we get
  $\comp{\R_{\F'/\F}\Gm}\simeq\ZZ$, and the fact that this isomorphism
  is given by the valuation follows from \cite[1.1/Proposition 7]{BLR90}.

  (b) By Corollary \ref{cor:tori-exact}, the exact sequence
  $0\to\Gm\to\R_{\F'/\F}\Gm \to (\R_{\F'/\F}\Gm)/\Gm\to 0$ gives rise
  to exact sequences of N\'eron models and component groups. So it is
  enough to show that the map \mpa
  $\comp{\Gm}=\ZZ \to \comp{\R_{\F'/\F}\Gm}=\ZZ^I$ is equal to
  $e$. Replacing $\F$ by $\Fsh$ again, we are reduced to the case when
  $\F'/\F$ is a totally ramified field extension of degree $ep^s$ with
  residue degree $p^s$.  Then by (a) we have a commutative square
\[
\begin{tikzcd}
  \cGm \arrow[r, hook] \arrow[d, "v_{\F}"] &
  \R_{R'/R}\cGm \arrow[d, "v_{\F'}"] \\
  \ZZ \arrow[r, "e"] & \ZZ
\end{tikzcd}
\]
proving the result.
\end{proof}

\subsection{Graphs and Picard schemes of singular curves}
\label{sec:singular}

In this section we work over an arbitrary field $k$. By a curve
over $k$ we shall mean a $k$-scheme $X$ of finite type which is
equidimensional of dimension 1 and Cohen-Macaulay (i.e.,~has no
embedded points). Let $\{X_j\}$ be the irreducible components of $X$,
and $\eta_j$ the generic point of $X_j$. The local ring
$\cO_{X,\eta_j}$ is Artinian, and following Raynaud
\cite[(6.1.1) and (8.1.1)]{Ray70} we write $d_j$ for its length, and $\delta_j$ for the
total multiplicity of $X_j$ in $X$. If $k'/k$ is a radicial closure of
$k$, and $\eta'_j\in X\otimes k'$ is the point lying over $\eta_j$,
then $\delta_j$ equals the length of the local ring of
$\eta'_j$. Moreover
$\delta_j = d_j[\ka(\eta_j)\cap k' : k]=d_jp^{n_j}$ for some
$n_j\ge 0$.

Until the end of this section, $k$ denotes an algebraically closed
field. We review the well-known description of the toric part of the
Picard scheme of a singular curve over $k$.

Let $Y/k$ be a reduced proper curve, and $Y^\sing\subset Y(k)$ its set
of singular points. Write $\phi \colon \Ytilde \to Y$ for its
normalisation. Define sets
\[
A=Y^\sing\subset Y(k),\quad B=\phi^{-1}(Y^\sing)\subset \tilde
Y(k),\quad C=\pi_0(\Ytilde).
\]
We have maps
\[
\phi \colon B \to A,\quad \psi\colon B \to C
\]
where $\psi$ maps $x\in B$ to the connected component of $\Ytilde$
containing it.

The \emph{extended graph} $\Gtilde_Y = (\widetilde V,\widetilde E)$
of $Y$ is the graph with vertices $\widetilde V$ and edges $\widetilde
E$, where
\begin{itemize}
\item $\widetilde V= A\sqcup C$, $\widetilde E = B$.
\item The endpoints of an edge $b\in B$ are $\phi(b)\in A$ and
  $\psi(b)\in C$. 
\end{itemize}
The graph $\Gtilde_Y$ is bipartite, and therefore has a canonical
structure of directed graph, by directing the edge $b$ so that its
source is $\phi(b)$.

Suppose $Y$ only has double points (meaning that if $y\in Y^\sing$
then $\phi^{-1}(y)$ has exactly 2 elements). The \emph{reduced graph}
$\Gamma_Y=(V,E)$ is the undirected graph (possibly with multiple edges
and loops) whose vertex set is $V=\pi_0(\Ytilde)$ and edge set is
$E=Y^\sing$. It is obtained from $\tilde \Gamma_Y$ by, for each vertex
$v\in A$, deleting $v$ and replacing the two edges incident to $v$
with a single edge. There is a canonical homeomorphism between the
geometric realisations of $\Gtilde_Y$ and $\Gamma_Y$, under which
$v\in A$ is mapped to the midpoint of the replacing edge. If $Y/k$ is
a proper curve, not necessarily reduced, we define
$\Gtilde_Y=\Gtilde_{Y^\red}$, $\Gamma_Y=\Gamma_{Y^\red}$, where
$Y^\red\subset Y$ is the reduced subscheme.

Let $G=\Pic^0_Y$ be the identity component of the Picard scheme of
$Y$. It is a smooth group scheme of finite type over $k$, classifying
line bundles on $Y$ whose restriction to each irreducible component
has degree zero. The filtration of $G$ by its linear and unipotent
subgroups is described as follows.

Let $Y'\to Y$ be the ``seminormalisation'' of $Y$, which is obtained
from $Y$ by replacing its singularities with singularities which are
\etale\ locally isomorphic to the union of coordinate axes in
$\mathbb{A}^N_k$. The normalisation map factors into a pair of finite
morphisms $\Ytilde \overset{\phi'}\to Y' \to Y$. These give rise to a
commutative diagram, whose rows are exact:
\[
\begin{tikzcd}
  0 \arrow[r] & G^\unip \arrow[r] \arrow[d] & G=\Pic^0_Y \arrow[r] \arrow[d, equal] &
  \Pic^0_{Y'}  \arrow[r] \arrow[d, "\phi^{\prime*}"] & 0
\\
0 \arrow[r] & G^\lin \arrow[r] &  G \arrow[r] & \Pic^0_{\Ytilde} \arrow[r] &  0
\end{tikzcd}
\]
(where $G^\unip$ is the maximal connected unipotent subgroup of $G$)
giving an isomorphism $\ker\phi^{\prime*}\simeq G^\tor=G^\lin/G^\unip$
by the snake lemma.

To give a line bundle on $Y'$ is equivalent to giving a line bundle on
$\Ytilde$ together with descent data for $\phi'\colon \Ytilde\to Y'$,
so the toric part $G^\tor $ classifies trivial line bundles on
$\Ytilde$ equipped with descent data to $Y'$. For the trivial bundle
$\cO_{\Ytilde}$, to give such descent data is equivalent to giving,
for each singular point $y\in Y$, an element of
$(k^\mul)^{\phi^{-1}(y)}/k^\mul$. The automorphism group of
$\cO_{\Ytilde}$ is $(k^\mul)^{\pi_0(\Ytilde)}$. Hence $G^\tor $ is
canonically
\[
\Gm^{\pi_0(\Ytilde)} \backslash \prod_{y\in Y^{\mathrm{sing}}} 
 \Bigl(\Gm^{\phi^{-1}(y)}/\mathrm{diag}(\Gm)\Bigl).
\]
Here $\Gm^{\pi_0(\Ytilde)}$ acts on $\Gm^{\phi^{-1}(y)}$ by the dual
of the map $\phi^{-1}(y)\subset \Ytilde(k)\overset\psi\to \pi_0(Y)$
associating to $x\in \Ytilde$ the connected component of $\Ytilde$
containing it. The character group of $G^\tor$ is therefore the kernel
of the map
\[
\ZZ[B] \xrightarrow{(\psi,\phi)} \ZZ[C]\oplus\ZZ[A]
\]
which (after replacing $\phi$ with $-\phi$) is the chain complex of
$\Gtilde_Y$.  This gives the formula \cite[I.3]{DeRa72}
\begin{equation}
\label{eq:Pictor}
  \Hom(G^\lin ,\Gm) = H_1(\Gtilde_Y,\ZZ).
\end{equation}
Suppose now that $Y$ is a proper curve over $k$, not necessarily
reduced. The map $\Pic^0_Y \to \Pic^0_{Y^\red}$ is an epimorphism, and
its kernel is a connected unipotent group scheme, so \eqref{eq:Pictor}
remains valid.

If $k$ is merely assumed to be perfect, \eqref{eq:Pictor} holds as an
isomorphism of $\Gal(\kbar/k)$-modules.

If $Y$ only has double points with distinct branches, then by the
homeomorphism $\Gtilde_Y\to \Gamma_Y$ we obtain the formula
\begin{equation}
\label{eq:Pictorred}
  \Hom(G^\lin ,\Gm) = H_1(\Gamma_Y,\ZZ).
\end{equation}

\subsection{The N\'eron model of $J$}
\label{sec:raynaud}

In preparation for \S\ref{sec:hard-bit}, we review in more detail the
results of Raynaud. We will follow mainly the notation of \cite{Ray70}
(see also \cite{BLR90}, where the notations are slightly different).

We consider a proper flat morphism $\cX \to S=\Spec R$, satisfying the
hypotheses (H1--3) below.
\begin{itemize}
\item[(H1)]
  The generic fibre $X\defeq \cX_\F$ is a smooth geometrically connected curve
  over $\F$ (in particular, $\Gamma(\cX,\cO_\cX)=R$).
\item[(H2)]
 $\cX$ is regular.
\end{itemize}
Let the irreducible components of $\cX_s$ be indexed by the set $C$,
and for $j\in C$, let $\cX_j\subset \cX_s$ be the scheme-theoretic
closure of the corresponding maximal point of $\cX_s$,
$\delta_j=p^{n_j}d_j$ its total multiplicity (\S\ref{sec:singular}), and
$Y_j= \cX_j^\red$. Define $\delta=\gcd\{\delta_j\}$,
$d=\gcd\{d_j\}$.
\begin{itemize}
\item[(H3)] $(\delta,p)=1$.
\end{itemize}
\noindent Hypotheses (H1) and
(H2) imply that Raynaud's condition $(\mathrm N)^*$ is satisfied
\cite[(6.1.4)]{Ray70}. Hypothesis (H1) is not particularly restrictive, since one may always reduce to this
case using Stein factorization. In the presence of (H1--2), hypothesis
(H3) implies that $\cX/S$ is cohomologically flat (equivalently, that
$\Gamma(\cX_s,\cO_{\cX_s})=k$), by \cite[(7.2.1)]{Ray70}.

Let $J=\Pic^0_{X/\F}$ be the Jacobian
variety of $X$, and let $\cJ$ be the N\'eron model of $J$.

The relative Picard functor $P=\Pic_{\cX/S}$ is the sheafification
(for the fppf topology) of the functor on the category of $S$-schemes
\[
S' \mapsto \Pic(\cX\times_S S').
\]
There is a morphism of abelian sheaves $\deg\colon P \to \ZZ$ which
takes a line bundle to its total degree along the fibres, and
$P'\subset P$ denotes its kernel. By \cite[(5.2) and (2.3.2)]{Ray70}, $P$
and $P'$ are formally smooth algebraic spaces over $S$, and the
closure $E\subset P$ of the zero section is an \etale\ algebraic space
over $S$, contained in $P'$.  The maximal separated quotient $Q=P/E$
is a smooth separated $S$-group scheme, and the subgroup $Q'=P'/E$ is
the closure in $Q$ of the identity component $Q^0$ (proof of
\cite[(8.1.2)(iii)]{Ray70}). One also has the subgroup $Q^\tau\subset Q$,
which is the inverse image of the torsion subgroup of $Q/Q^0$. As
$\cX$ is regular, condition d) of \cite[(8.1.2)]{Ray70} holds, and so
$Q^\tau$ is closed in $Q$. By definition $\deg(Q^\tau)=0$, and
therefore $Q'=Q^\tau$. So \cite[(8.1.2) and (8.1.4)(b)]{Ray70} imply that
$\cJ=Q'=P'/E$.

(If (H3) is not satisfied, then $P$ is in general not representable,
but it still has a maximal separated quotient $Q$ which is a smooth
separated $S$-group scheme \cite[(4.1.1)]{Ray70}. If moreover $k$ is
perfect, then $Q'$ again equals $\cJ$ \cite[(8.1.4)(a)]{Ray70}.)

Let $P_s^0$ be the identity component of $P_s$. We have
$P_s^0=\Pic^0_{\cX_s/k}$, the identity component of the Picard scheme
of $\cX_s$. By \cite[(6.4.1)(3)]{Ray70}, the intersection $P^0_s\cap E_s$
is a constant group scheme over $k$, cyclic of order $d$, generated by
the class of the line bundle $\cL'=\cO(\sum_j(d_j/d)Y_j)$. (Because
$\cX$ is regular, the integers $d$ and $d'$ \cite[(6.1.11)(3)]{Ray70} are
equal.)  Therefore $\cJ_s^0$ is canonically isomorphic to
$\Pic^0_{\cX_s/k}/\langle\cL'\rangle$, and in particular, if $d=1$ then
$\cJ_s^0=\Pic^0_{\cX_s/k}$.

Suppose that $k$ is perfect and $d=1$. Combining the above with the discussion in
\S2.2, we then have an isomorphism of $\Gal(\bar k/k)$-modules
\[
\ch{J}\defeq \Hom(\cJ_s^{0,\lin}\otimes_k\bar k, \Gm) = H_1(\tilde\Gamma_{\cX_s\otimes\bar k},\ZZ).
\]
Finally, we recall the description of the component group. First
suppose that $R$ is strictly Henselian ($k$ not necessarily
perfect). Then \cite[(8.1.2)]{Ray70} shows that the component group
$\comp{J}=\cJ_s/\cJ_s^0$ is computed as follows: by the above,
$\comp{J}= Q'_s/Q^0_s$ is the cokernel of the map
\[
  E_s \to  P'_s/P^0_s =\ker(\deg\colon P_s/P^0_s \to \ZZ).
\]
One has an isomorphism
\[
  P_s/P_s^0 \simeq \ZZ^C,\qquad (\cL\in P_s)
  \mapsto (\deg \cL|_{Y_j})_j.
\]
Let $D\subset \Div\cX$ be the group of Cartier divisors
supported in the special fibre, and $D_0\subset D$ the subgroup of
principal divisors. By \cite[(6.1.3)]{Ray70} one has $E_s=D/D_0$.  As
$\cX$ is regular and $R=\Gamma(\cX,\cO_\cX)$,  $D$ is
freely generated by the set of reduced components $\{Y_j\}$, and $D_0$
is the subgroup generated by the divisor $(\varpi)$ of the special
fibre. The complex of \cite[(8.1.2)(i)]{Ray70} then becomes
\begin{equation}
  \label{eq:raynaud}
0 \to \ZZ \xrightarrow{i}  \ZZ[C] \xrightarrow{a} \ZZ^C \xrightarrow{b} \ZZ \to 0
\end{equation}
where the maps are:
\begin{align}
  i(1) &= \sum_{j\in C}d_j (j) \notag\\
  a(\ell) &= \bigl(\frac1{\delta_j}\deg\cO_\cX(Y_\ell)|_{Y_j}
            \bigr)_{j\in C} =
            \bigl(\frac1{p^{n_j}}(Y_j.Y_\ell)\bigr)_{j\in C}
   \qquad (\ell\in C)   \label{eq:raynaud-defs}
\\
b(m) &= \sum_{j\in C}\delta_jm_j,
   \qquad{m=(m_j)\in \ZZ^C} \notag
\end{align}
and $\comp{J}=\ker(b)/\im(a)$.  

If $\cX$ is semistable (meaning that $\cX_s$ is smooth over $k$ apart
from double points with distinct tangents), then both the character group and component
group can be described in terms of the reduced graph $\Gamma_{\cX_s}$.
The character group equals the homology of $\Gamma_{\cX_s}$. The
map $a\colon \ZZ[C] \to \ker(b)=\ZZ^{C,0}\subset \ZZ^C$ is, after 
identifying $\ZZ[C]$ with
$\ZZ^C$, the $0$-Laplacian $\laplace=\laplace_0$ 
(as in \cite{JRS22}) of the graph $\Gamma_{\cX_s}$,
which takes a vertex $v\in C$ to $\sum (v)- (v')\in \ZZ[C]\zero$,
the sum taken over all edges joining $v$ to an adjacent vertex $v'$.

In general, we have an isomorphism of $\Gal(\bar k/k)$-modules $\comp{J}
=\ker(b)/\im(a)$, where $a$, $b$ are the maps in the complex
\eqref{eq:raynaud} for the base change $\cX\otimes_R \Rsh$.

\subsection{Generalized Picard schemes of singular curves}
\label{sec:singular2}

Let $k$ be a field, and $Y/k$ a proper curve (in the sense of
\S\ref{sec:singular} above). Write $k'$ for the $k$-algebra
$\Gamma(Y,\cO_Y)$. By a \emph{generalized modulus} on $Y$ we mean a morphism of
$k$-schemes $\kModulus\to Y$, where $\kModulus$ is a finite
$k$-scheme, flat over $\Spec k'$.

\begin{lem}
  \label{lem:rigid}
  Let $g\colon \kModulus\to Y$ be a generalized modulus. Suppose that
  $g(\kModulus)$ meets each connected component of $Y$. Then
  $(\kModulus, g)$ is a rigidifier\footnote{``rigidificateur'' in
    \cite{Ray70}, ``rigidificator'' in \cite{BLR90}.} of $\Pic_{X/k}$, in
  the sense of \cite[(2.1.1)]{Ray70}
\end{lem}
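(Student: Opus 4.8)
The plan is to check the two conditions that constitute the definition of a rigidifier in \cite[(2.1.1)]{Ray70}: namely that $g\colon\kModulus\to Y$ is finite and flat, and that the induced map $\cO_Y\to g_*\cO_\kModulus$ is injective (equivalently, that $g_*\cO_\kModulus$ is a ``rigidifying sheaf'', i.e. a sheaf of $\cO_Y$-algebras, coherent and locally free, whose formation commutes with base change and for which $\Gamma(Y,\cO_Y)\to\Gamma(Y,g_*\cO_\kModulus)$ makes the relative Picard functor rigid). First I would observe that $\kModulus\to\Spec k'$ is finite and flat by hypothesis, and $\Spec k'=\Spec\Gamma(Y,\cO_Y)$ receives a canonical finite map from $Y$ (since $Y$ is proper over $k$, hence $k'$ is a finite $k$-algebra and $Y\to\Spec k'$ is proper with $\cO$-connected... more precisely $Y\to\Spec k'$ has the property $(\cO_{\Spec k'}\isom (Y\to\Spec k')_*\cO_Y)$); composing, $g$ factors as $\kModulus\to\Spec k'$ followed by the structural map, but what we actually need is that $\kModulus$, viewed as a $Y$-scheme via $g$, is finite and flat \emph{over} $Y$. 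This is the first point to nail down: $\kModulus$ is finite over $k$ (being finite over the finite $k$-algebra $k'$), hence the morphism $g\colon\kModulus\to Y$ is finite (a morphism from an Artinian/finite $k$-scheme to any $k$-scheme of finite type is finite, as it is proper with finite fibres), and flatness of $g$ as a $Y$-scheme is the nontrivial part.

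For flatness, the key step is that flatness can be checked after the finite flat base change $Y\to\Spec k'$ is irrelevant; instead one uses that $\kModulus$ is flat over $\Spec k'$ and that $g$ lands in $Y$ over $\Spec k'$: since $\Spec k'$ is the base and $\kModulus$ is $k'$-flat, while $Y$ is also a $k'$-scheme, flatness of $\kModulus\to Y$ does \emph{not} follow formally — rather, I would argue that the relevant requirement in \cite[(2.1.1)]{Ray70} for a rigidifier over a base $T$ (here $T=\Spec k$) is that $\kModulus$ be finite and \emph{flat over $T$} (not over $Y$) and that $g$ be ``schematically dominant'' in the sense that $\cO_Y\hookrightarrow g_*\cO_\kModulus$. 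The flatness over $T=\Spec k$ is automatic since every $k$-module is flat. So the real content is the injectivity $\cO_Y\to g_*\cO_\kModulus$, which I would establish componentwise: on each connected component $Y_\alpha$ of $Y$, the hypothesis says $g(\kModulus)\cap Y_\alpha\ne\emptyset$; since $Y_\alpha$ is a Cohen-Macaulay curve (no embedded points) and hence $\cO_{Y_\alpha}$ has no sections supported on a proper closed subset other than $0$... actually $\cO_{Y_\alpha}$ is a subsheaf of its ring of rational functions on each component, so a section vanishing on a nonempty closed subscheme meeting every irreducible component... here one must be slightly careful because $g(\kModulus)$ need only meet each \emph{connected} component, not each irreducible component.

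The main obstacle, therefore, is precisely this injectivity/schematic-density check when $Y$ is reducible and $g(\kModulus)$ hits a connected component only at, say, a single point lying on one of several irreducible components: I need to argue that the map $\Gamma(Y_\alpha,\cO_{Y_\alpha})\to\kappa(\text{that point})$-ish target is injective, which uses $\Gamma(Y_\alpha,\cO_{Y_\alpha})$ being a finite local (or at least Artinian-after-..., no — it is a finite $k$-algebra which is a domain only if $Y_\alpha$ is geometrically connected and reduced) $k$-algebra. The clean way: $\Gamma(Y,\cO_Y)=k'=\prod_\alpha k'_\alpha$ with $k'_\alpha=\Gamma(Y_\alpha,\cO_{Y_\alpha})$; the composite $k'_\alpha=\Gamma(Y_\alpha,\cO_{Y_\alpha})\to\Gamma(Y_\alpha,g_*\cO_{\kModulus})=\Gamma(g^{-1}(Y_\alpha),\cO_\kModulus)$ is, by the very definition of a generalized modulus, the structural map $k'_\alpha\to\Gamma$ of the $k'$-scheme $\kModulus$ restricted to the $\alpha$-factor, and this is faithfully flat (hence injective) exactly because $\kModulus$ is flat over $\Spec k'$ \emph{and} $g^{-1}(Y_\alpha)=\kModulus\times_{\Spec k'}\Spec k'_\alpha$ is nonempty — which is guaranteed by the hypothesis that $g(\kModulus)$ meets $Y_\alpha$. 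Then I would cite \cite[(2.1.1)]{Ray70} to conclude that $(\kModulus,g)$ is a rigidifier, the point being that a finite locally free $\cO_Y$-algebra... wait, it need not be $\cO_Y$-locally-free, but Raynaud's definition only asks for $g$ finite, $\kModulus$ flat over the base $S=\Spec k$, and $\cO_Y\to g_*\cO_\kModulus$ injective, all of which are now in hand. I expect the write-up to be short, with the only subtlety being the bookkeeping of connected versus irreducible components and the reduction to the factorwise statement over $\Spec k'$.
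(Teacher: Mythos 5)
Your proposal is correct and is essentially the paper's own argument: the entire content is the injectivity of $k'=\Gamma(Y,\cO_Y)\to\Gamma(\kModulus,\cO_\kModulus)$, which holds because flatness of $\kModulus$ over $k'$ together with the hypothesis that $g(\kModulus)$ meets every connected component of $Y$ (equivalently, that $\kModulus\to\Spec k'$ is surjective) makes $\kModulus/k'$ faithfully flat. One small correction to your framing: the condition in \cite[(2.1.1)]{Ray70} is injectivity of $\Gamma(Y\otimes_kA,\cO)\to\Gamma(\kModulus\otimes_kA,\cO)$ for every $k$-algebra $A$ (which reduces to $A=k$ precisely because $k$ is a field), not injectivity of the sheaf map $\cO_Y\to g_*\cO_\kModulus$ --- the latter typically fails --- but the verification you actually carry out, componentwise on global sections, is the correct one and matches the paper's.
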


\begin{proof}
  For $(\kModulus, g)$ to be a rigidifier, it is necessary and
  sufficient that for every $k$-algebra $A$, the map
  $g^* \colon \Gamma(Y \otimes_k A, \cO_{Y\otimes A}) \to
  \Gamma(\kModulus \otimes A, \cO_{\kModulus\otimes A})$ is injective. As
  $k$ is a field it is enough to show this for $A = k$, and this holds
  since by hypothesis $\kModulus/k'$ is faithfully flat.
\end{proof}

We define $\Pic_{(Y,\kModulus)/k}$ to be the scheme classifying line
bundles on $Y$ together with a trivialisation of the pullback to
$\kModulus$. Precisely, consider the functor $\cF$ which to a
$k$-scheme $S$ associates the set of equivalence classes of pairs
$(\cL,\alpha)$, where $\cL$ is a line bundle on $Y\times S$ and
$\alpha \colon \cO_{\kModulus\times S} \isom (g\times \id_S)^*\cL$ is a
trivialisation, and where pairs $(\cL,\alpha)$ , $(\cL',\alpha')$ are
equivalent if there exists an isomorphism
$\sigma\colon \cL \isom \cL'$ such that
$\alpha'=g^*(\sigma)\circ\alpha$.

Let $Y = Y_1 \sqcup Y_2$ where $g(\kModulus)$ is disjoint from $Y_2$ and
meets each connected component of $Y_1$. If $Y_2 = \emptyset$ then by
Lemma \ref{lem:rigid} we are in the situation of \cite[\S2]{Ray70}, and $\cF$ is a
sheaf for the fppf topology which we denote $\Pic_{(Y,\kModulus)/k}$. In
general, we define $\Pic_{(Y,\kModulus)/k}$ to be the sheafification of
$\cF$ for the fppf topology. Obviously
$\Pic_{(Y,\kModulus)/k} = \Pic_{(Y_1 ,\kModulus)/k} \times_k
\Pic_{Y_2/k}$. Put $k' = k_1 \times k_2$ where
$k_i = \Gamma(Y_i , \cO_{Y_i} )$. From \cite{Ray70} we then obtain:

\begin{prop}
  The functor $\Pic_{(Y,\kModulus)/k}$ is represented by a smooth
  $k$-group scheme, and there is an exact sequence of smooth group
  schemes
  \begin{equation}\label{eq:gen_pic_filtration}
    0 \to H \to \Pic_{(Y,\kModulus)/k} \to \Pic_{Y/k} \to 0
  \end{equation}
  where
  \[
    H = H_\Sigma \defeq \coker\left( \R_{k'/k}(\Gm) \to \R_{\kModulus/k}(\Gm)\right).
  \]
\end{prop}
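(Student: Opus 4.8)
The plan is to reduce immediately to the case $Y_2=\emptyset$, since by construction $\Pic_{(Y,\kModulus)/k}=\Pic_{(Y_1,\kModulus)/k}\times_k\Pic_{Y_2/k}$ and $\R_{k'/k}\Gm=\R_{k_1/k}\Gm\times_k\R_{k_2/k}\Gm$, with $H_\Sigma$ depending only on the $k_1$-factor (the map $\R_{k_2/k}\Gm\to\{1\}$ contributes nothing, as $\kModulus$ is disjoint from $Y_2$); representability and the exact sequence for $Y$ then follow from those for $Y_1$ together with the known representability and structure of $\Pic_{Y_2/k}$. So assume $g(\kModulus)$ meets every connected component of $Y$. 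Then by Lemma~\ref{lem:rigid}, $(\kModulus,g)$ is a rigidifier of $\Pic_{Y/k}$ in the sense of \cite[(2.1.1)]{Ray70}, and $\cF=\Pic_{(Y,\kModulus)/k}$ is precisely Raynaud's rigidified Picard functor $(P,\kModulus)$. By \cite[(2.3.1)]{Ray70} this is representable by a smooth $k$-group scheme (here I use that $Y$ is proper over the field $k$, so $\Pic_{Y/k}$ itself is representable by a smooth group scheme, and the rigidified version is an extension of it by an affine group; smoothness over a field is automatic once we check it is a scheme, or cite Raynaud directly).

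Next I would identify the kernel $H$ of the forgetful map $\Pic_{(Y,\kModulus)/k}\to\Pic_{Y/k}$, which sends $(\cL,\alpha)$ to $\cL$. Working on the level of the functor $\cF$ (before sheafification, which is harmless here since we already know $\cF$ is a sheaf), the fibre over the trivial bundle $\cO_Y$ consists of trivialisations $\alpha\colon\cO_\kModulus\isom g^*\cO_Y=\cO_\kModulus$ modulo the automorphisms $g^*(\sigma)$ coming from $\sigma\in\Aut(\cO_Y)$. A trivialisation of the trivial bundle on $\kModulus\times S$ is a unit, i.e.\ a point of $\R_{\kModulus/k}\Gm$; automorphisms of $\cO_Y$ on $Y\times S$ are units of $\Gamma(Y\times S,\cO)$, i.e.\ points of $\R_{k'/k}\Gm$ since $k'=\Gamma(Y,\cO_Y)$ and $Y$ is proper; and $g^*$ induces exactly the canonical map $\R_{k'/k}\Gm\to\R_{\kModulus/k}\Gm$ (this is the map $g$ of the ``$k\subset k'\subset\kModulus$'' discussion earlier in the excerpt, with $Y=\Spec k$ there replaced by $\Gm$). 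Hence the fibre over $\cO_Y$ is the presheaf cokernel, and after sheafification $H=\coker(\R_{k'/k}\Gm\to\R_{\kModulus/k}\Gm)=H_\Sigma$ as asserted. That this sheaf cokernel is representable by a smooth group scheme is exactly the statement established in the Weil-restriction discussion above (the map is a surjection onto a closed subgroup scheme with smooth cokernel, since $\Spec k'\to\Spec k$ need not be surjective but we have already reduced to the case where $\kModulus$ meets each component, so $k'\to\kModulus$ is faithfully flat and the relevant map factors through the surjective part).

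Finally I would check exactness of \eqref{eq:gen_pic_filtration} as a sequence of fppf sheaves: left-exactness is the computation of the kernel just given, and surjectivity of $\Pic_{(Y,\kModulus)/k}\to\Pic_{Y/k}$ holds because any line bundle $\cL$ fppf-locally admits a trivialisation of its pullback to $\kModulus$ (indeed $\kModulus$ is finite over $k$, so $g^*\cL$ is trivial after a faithfully flat base change). Then all three terms are smooth group schemes — $H_\Sigma$ and $\Pic_{(Y,\kModulus)/k}$ by the above, $\Pic_{Y/k}$ classically — so the short exact sequence of sheaves is a short exact sequence of smooth group schemes. The only genuinely delicate point is the bookkeeping in the reduction step when $Y_2\neq\emptyset$: one must be careful that the sheafification in the definition of $\Pic_{(Y,\kModulus)/k}$ really does produce the product $\Pic_{(Y_1,\kModulus)/k}\times_k\Pic_{Y_2/k}$ and that $H_\Sigma$ is unaffected by the extra factor $k_2$; but this is exactly the content of the displayed factorisation of $g$ in the Weil-restriction discussion, where $k_2'$ (here $k_2$) is the part on which the relevant projection kills everything, so no new subtlety arises beyond careful reference to that computation.
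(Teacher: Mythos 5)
Your proposal is correct and follows essentially the same route as the paper: reduce to the case where $g(\kModulus)$ meets every component via the product decomposition, invoke Lemma~\ref{lem:rigid} so that Raynaud's rigidified Picard theory applies, and use the smoothness of the Weil-restriction cokernel $H_\Sigma$ established in \S\ref{sec:stuff}. The only difference is that where the paper simply cites Raynaud's (2.4.1) and (2.4.3) for the four-term exact sequence $0 \to \R_{k_1/k}\Gm \to \R_{\kModulus/k}\Gm \to \Pic_{(Y_1,\kModulus)/k} \to \Pic_{Y/k} \to 0$, you re-derive the kernel identification and the right-exactness by a direct functor-of-points computation, which is a harmless (and slightly more self-contained) substitute.
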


\begin{proof}
  From (2.1.2), (2.4.1) and (2.4.3) of \cite{Ray70} we get the
  representability of $\Pic_{(Y_1 ,\kModulus)/k}$ along with an exact
  sequence\mpa
  \[
    0 \to \R_{k_1/k} \Gm \to \R_{\kModulus/k} \Gm \to
    \Pic_{(Y_1,\kModulus)/k} \to \Pic_{Y_1/k} \to 0
  \]
  of smooth group schemes (since, in this setting, Raynaud’s
  $\Gamma_X^*$ and $\Gamma_R^*$ are just $\R_{k_1/k} \Gm$ and
  $\R_{\kModulus/k}\Gm$). By \S1.1  above, the quotient
  $H$ is a smooth group scheme, and taking products with $\Pic_{Y_2/k}$ gives
  the result.
\end{proof}

Let $\Pic_{(Y,\kModulus)/k}^0$ denote the inverse image of
$\Pic_{Y/k}^0$ (classifying line bundles which are of degree zero on
every component of $Y$). Then $\Pic_{(Y,\kModulus)/k}^0$ is a smooth
connected $k$-group scheme of finite type.

\begin{exam}
  Suppose $Y$ is smooth over $k$ and absolutely
  irreducible, and that $\kModulus\to Y$ is a closed immersion. Then the image
  of $\kModulus$ is an effective divisor $\mathfrak{m}=\sum m_i(y_i)$
  for points $y_i\in Y(k)$. In this case $\Pic_{(Y,\kModulus)/k}^0$ is
  none other than the classical \cites{Ros54,Ser84} generalized Jacobian
  $J_{\mathfrak{m}}(Y)$ of $Y$.
  The isomorphism $J_\fm(Y) \isom \Pic^0_{(Y,\kModulus)/k}$ is given on
  $k$-points by mapping the class of a divisor
  $D\in\Div^0(Y\smallsetminus \kModulus)$ to the class of the pair
  $(\cO_Y(D),\alpha_{\mathrm{triv}})$, where $\alpha_{\mathrm{triv}}$ is
  the canonical trivialisation
  $\cO_{\kModulus} \isom \cO_Y|_\kModulus=\cO_Y(D)|_\kModulus$.
\end{exam}

Let $k$ be perfect. Then $\Pic_{(Y,\kModulus)/k}^0$ has a maximal
connected affine subgroup $\Pic_{(Y,\kModulus)/k}^{0,\lin}$ which is a
linear group, and its character group has the following combinatorial
description, generalizing \S\ref{sec:singular} above.

First suppose that $k$ is algebraically closed, and that $Y$,
$\kModulus$ are reduced. As in \S\ref{sec:singular}, let
$\phi\colon \Ytilde \to Y$ be the normalisation, and define
$A=Y^\sing$, $B=\phi^{-1}(A)$, $C=\pi_0(\Ytilde)$.
Decompose
$\kModulus=\kModulus^\sing\sqcup \kModulus^\reg$, where $z\in \kModulus^\sing$
(resp.~$\kModulus^\reg$) if $g(z)$ is a singular (resp.~smooth) point of
$Y$. There are maps
\[
\begin{tikzcd}
  B \arrow[r, "\psi"]  \arrow[d, "{\phi}"]
  & C \arrow[r, leftarrow, "\theta"] & \kModulus^\reg \\
  A \arrow[r, leftarrow, "\lambda"] & \kModulus^\sing
\end{tikzcd}
\]
where $\phi$, $\psi$ are as before, $\lambda$ is the restriction of
$g$ to $\kModulus^\sing$, and $\theta(z)$ is the component of $\Ytilde$
containing $g(z)$.

Define the \emph{extended graph of $(Y,\kModulus)$} to be the directed
graph
$\Gtilde_{Y,\kModulus}$ obtained by adding to the graph $\Gtilde_Y$
\begin{itemize}
\item a single vertex $\vzero$
\item for each $z\in \kModulus^\sing$, an edge from $\vzero$ to the vertex
  $\lambda(z)\in A\subset V(\Gtilde_Y)$
\item for each $z\in \kModulus^\reg$, an edge from $\vzero$ to the vertex
  $\theta(z)\in C\subset V(\Gtilde_Y)$.
\end{itemize}
If $Y$ only has double points and $\kModulus=\kModulus^\reg$, then we may
likewise define the \emph{reduced graph} $\Gamma_{Y,\kModulus}$, which
is the undirected graph obtained by adding to $\Gamma_Y$ a single
vertex $\vzero$ and, for each $z\in \kModulus$, an edge joining $\vzero$ to
$\theta(z)\in C=V(\Gamma_Y)$. As before, the geometric realisations of
$\Gtilde_{Y,\kModulus}$ and $\Gamma_{Y,\kModulus}$ are canonically
homeomorphic.

For arbitrary perfect $k$ and proper curve $Y$, we define
$\Gtilde_{Y,\kModulus}$, $\Gamma_{Y,\kModulus}$ to be the graphs
attached to the curve with modulus
$(Y^\red\otimes\kbar, \kModulus^\red\otimes\kbar)$, which are graphs
with a continuous action of $\Gal(\kbar/k)$.

\begin{prop}\label{prop:charGenJac}\mbox{}
\begin{enumerate}[\upshape (a)]
\item
\label{prop:charGenJac1}
  The character group $\Hom(\Pic^{0,\lin}_{(Y,\kModulus)/k},\Gm)$ is canonically
  isomorphic to $H_1(\Gtilde_{Y,\kModulus},\ZZ)$, as $\Gal(\kbar/k)$-module.
\item
\label{prop:charGenJac2}
 If $Y^\red$ has only double points, then
  $\Hom(\Pic^{0,\lin}_{(Y,\kModulus)/k},\Gm)\simeq H_1(\Gamma_{Y,\kModulus},\ZZ)$.
\end{enumerate}
\end{prop}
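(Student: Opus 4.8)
The plan is to reduce to the case $k=\kbar$ with $Y$ and $\kModulus$ reduced --- indeed seminormal --- then to generalize the explicit description of the toric part of the Picard scheme from \S\ref{sec:singular} by keeping track of the extra trivialisation data along $\kModulus$, and to read off the character group as $H_1$ of the chain complex of $\Gtilde_{Y,\kModulus}$.

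First I would carry out the reductions. Over a perfect field the Chevalley decomposition, and hence the maximal subtorus of the linear part, commutes with base change to $\kbar$, and both sides of (a) carry $\Gal(\kbar/k)$-actions compatible with that base change, so I may assume $k=\kbar$ (this is how $\Gtilde_{Y,\kModulus}$ is defined for general $k$). Next, with $Y'\to Y^\red$ the seminormalisation --- a universal homeomorphism, along which $\kModulus^\red$ is transported --- the natural maps
\[
\Pic^0_{(Y,\kModulus)/k}\longrightarrow\Pic^0_{(Y^\red,\kModulus^\red)/k}\longrightarrow\Pic^0_{(Y',\kModulus^\red)/k}
\]
are surjective with unipotent kernel: on underlying bundles these are the maps $\Pic^0_Y\to\Pic^0_{Y^\red}\to\Pic^0_{Y'}$ of \S\ref{sec:singular}, and the trivialisation along $\kModulus$ contributes only the further unipotent kernels $\ker(\R_{\kModulus/k}\Gm\to\R_{\kModulus^\red/k}\Gm)$, and similarly. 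So neither the maximal subtorus nor $\Gtilde_{Y,\kModulus}$ changes (the singular locus and the partition $\kModulus=\kModulus^\sing\sqcup\kModulus^\reg$ are unaffected), and I may assume $Y$ reduced and seminormal: $\phi\colon\Ytilde\to Y$ is the normalisation, every singular point of $Y$ is an ordinary multiple point, and $\kModulus$ is \etale. Then $H=H_\kModulus$ is a torus, $(\Pic^0_Y)^\lin=(\Pic^0_Y)^\tor=:G^\tor$, and $\Pic^{0,\lin}_{(Y,\kModulus)/k}$ is itself a torus --- an extension of $G^\tor$ by $H_\kModulus$ --- which I will denote $G^\tor_\kModulus$.

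The main step is to describe $G^\tor_\kModulus$ explicitly, generalizing \S\ref{sec:singular}. As there, $G^\tor_\kModulus$ classifies the trivial bundle $\cO_{\Ytilde}$ equipped with a descent datum for $\phi$ \emph{and} a trivialisation along $\kModulus$, modulo $\Aut(\cO_{\Ytilde})=\Gm^C$. A descent datum is a tuple $\xi=(\xi_b)_{b\in B}\in\Gm^B$, well-defined modulo $\prod_{y\in Y^\sing}\diag(\Gm)\subset\prod_y\Gm^{\phi^{-1}(y)}=\Gm^B$; for $z\in\kModulus^\reg$ a trivialisation along $z$ is an element of $\Gm$ (compared with $\cO_{\Ytilde}$ at the unique point of $\Ytilde$ over $g(z)$); and for $z\in\kModulus^\sing$ it is a nonzero vector of the line $\cL|_{g(z)}\subset\bigoplus_{b\in\phi^{-1}(g(z))}k$ cut out by $\xi$, hence of the form $t_z\cdot(\xi_b)_b$ with $t_z\in\Gm$, where --- crucially --- $t_z$ transforms inversely to the $\diag(\Gm)$-rescaling of $\xi$ at $\lambda(z)$. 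Collecting coordinates,
\[
G^\tor_\kModulus \;=\; \Bigl(\Gm^B\times\Gm^{\kModulus^\reg}\times\Gm^{\kModulus^\sing}\Bigr)\Big/\Bigl(\prod_{y\in Y^\sing}\diag(\Gm)\Bigr)\Big/\Gm^C ,
\]
where $\prod_y\diag(\Gm)$ acts on $\Gm^B$ through $\phi$ and inversely on $\Gm^{\kModulus^\sing}$ through $\lambda$, and $\Gm^C$ acts on $\Gm^B$ through $\psi$ and on $\Gm^{\kModulus^\reg}$ through $\theta$. Passing to character lattices, $\XX(G^\tor_\kModulus)$ is the kernel of the map
\[
\ZZ[B]\oplus\ZZ[\kModulus^\reg]\oplus\ZZ[\kModulus^\sing]\longrightarrow\ZZ[A]\oplus\ZZ[C]
\]
sending $b\mapsto(\phi(b),\psi(b))$, $z\mapsto(0,\theta(z))$ for $z\in\kModulus^\reg$, and $z\mapsto(-\lambda(z),0)$ for $z\in\kModulus^\sing$. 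Up to the irrelevant signs this is the boundary map $\partial\colon C_1(\Gtilde_{Y,\kModulus})\to C_0(\Gtilde_{Y,\kModulus})$ followed by the projection of $C_0$ that kills the coordinate of the vertex $\vzero$; and for a $1$-chain, vanishing of the $A$- and $C$-coordinates of $\partial$ already forces vanishing of the $\vzero$-coordinate (summing the $A$-equations, resp. the $C$-equations, shows the total $\kModulus$-degree is $0$). Hence $\XX(G^\tor_\kModulus)=\ker\partial=H_1(\Gtilde_{Y,\kModulus},\ZZ)$. As the identifications are functorial, undoing the reductions gives (a) as $\Gal(\kbar/k)$-modules; and (b) follows from the canonical homeomorphism $\Gtilde_{Y,\kModulus}\simeq\Gamma_{Y,\kModulus}$, which induces an isomorphism on $H_1$.

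I expect the delicate point to be the description of $G^\tor_\kModulus$ at points $z\in\kModulus^\sing$ lying over singular points of $Y$: a trivialisation there is measured not against anything canonical but only relative to the non-canonical gluing datum $\xi$, so one must track exactly how the ``edge variable'' $t_z$ couples to the $\diag(\Gm)$-ambiguity of $\xi$ at $\lambda(z)$ --- this coupling is precisely what makes the character lattice the homology of the \emph{extended} graph $\Gtilde_{Y,\kModulus}$ rather than some simpler complex. A more structural route, at the cost of a compatibility check, restricts \eqref{eq:gen_pic_filtration} to degree zero to get $0\to H_\kModulus\to\Pic^0_{(Y,\kModulus)/k}\to\Pic^0_{Y/k}\to0$ with $H_\kModulus$ a torus, hence $0\to H_1(\Gtilde_Y,\ZZ)\to\XX(G^\tor_\kModulus)\to\XX(H_\kModulus)\to0$ on character groups of linear parts; since $\XX(H_\kModulus)=\ker(\ZZ[\kModulus]\to\ZZ[\pi_0(Y)])$ and the long exact homology sequence of the pair $(\Gtilde_{Y,\kModulus},\Gtilde_Y)$ gives $0\to H_1(\Gtilde_Y,\ZZ)\to H_1(\Gtilde_{Y,\kModulus},\ZZ)\to\ker(\ZZ[\kModulus]\to\ZZ[\pi_0(Y)])\to0$, both are extensions of the same lattice by the same lattice --- one then must check that the extension classes agree, which is the functoriality that the explicit route makes manifest.
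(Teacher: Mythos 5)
Your proposal is correct and follows essentially the same route as the paper: the same reductions (to $k=\kbar$, then to $Y$ reduced and seminormal with the unipotent-kernel argument), the same explicit presentation of the torus by descent data along the normalisation together with trivialisations along $\kModulus$ modulo $\Aut(\cO_{\Ytilde})$, and the same identification of the resulting character lattice with $H_1(\Gtilde_{Y,\kModulus},\ZZ)$ by observing that the $\vzero$-row of the boundary map is redundant. Your parametrisation of the trivialisation at $z\in\kModulus^\sing$ by a scalar multiple of the vector cut out by the gluing datum differs from the paper's coordinates $(a_x,b_z)$ only by the sign change $m_z\mapsto -m_z$ on $\ZZ[\kModulus^\sing]$, i.e.\ by a choice of edge orientation, so the two computations agree.
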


\begin{proof}
  We may assume that $k$ is algebraically closed; the Galois
  equivariance of the isomorphisms will be clear from the
  construction. By the homeomorphism between the extended and reduced
  graphs, it suffices to prove \eqref{prop:charGenJac1}.
  
The map
$g^\red\colon \kModulus^\red \to Y^\red$ is a reduced modulus, and the
obvious morphism induced by pullback
\[
\Pic_{(Y,\kModulus)/k}^0 \to \Pic_{(Y^\red,\kModulus^\red)/k}^0
\]
has unipotent kernel, since the same is true for the maps
$\R_{\kModulus/k}\Gm\to \R_{\kModulus^\red/k}\Gm$ and $\Pic_{Y/k}^0 \to
\Pic_{Y^\red/k}^0$. So the character group of $\Pic_{(Y,\kModulus)/k}^{0,\lin}$
is unchanged by passing to reduced subschemes; hence we may assume that
both $Y$ and $\kModulus$ are reduced. Next, let $Y'\to Y$ be
the seminormalisation. Then as $\kModulus$ is reduced, $\kModulus \to Y$
factors uniquely through $Y'$, and the resulting map
\[
\Pic_{(Y,\kModulus)/k}^0 \to \Pic_{(Y',\kModulus)/k}^0
\]
has unipotent kernel. So we may assume in addition that $Y$ is
seminormal. Finally,  normalisation induces an exact sequence
\begin{equation}\label{eq:Gdef}
0 \to G \to  \Pic_{(Y,\kModulus)/k}^0 \xrightarrow{\phi^*} \Pic_{\tilde Y/k}^0 \to 0
\end{equation}
where $G$ classifies equivalence classes of pairs $(\cL,\beta)$, where
$\cL$ is a line bundle on $Y$ whose pullback to $\tilde Y$ is
trivial, and $\beta$ is a trivialisation of the pullback of $\cL$ to
$\kModulus$. There is a surjective map
\begin{equation}
  \label{eq:singular_tori}
  \Gm^B \times \Gm^{\kModulus^\sing} \times \Gm^{\kModulus^\reg} \to G
\end{equation}
given as follows: a tuple
\[
  ((a_x)_{x\in B},(b_z)_{z\in \kModulus^\sing},
  (c_z)_{z\in \kModulus^\reg})\in (\Gm^B \times \Gm^{\kModulus^\sing} \times
  \Gm^{\kModulus^\reg})(k)
\]
determines:
\begin{itemize}
\item[(i)] for every $y\in A$, and any $x$, $x'\in \phi^{-1}(y)$,
  isomorphisms $a_x^{-1}a_{x'} \colon x^*\cO_{\tilde Y}=k \isom
  k=x^{\prime*}\cO_{\tilde Y}$ satisfying the cocycle condition, and thus a
  descent of $\cO_{\tilde Y}$ to a line
  bundle $\cL$ on $Y$
\item[(ii)] for every $z\in \kModulus^\sing$, and every
  $x\in \phi^{-1}(g(z))$, a trivialisation
  $b_za_x^{-1}\colon k\isom k=x^*\cO_{\tilde Y}$.
  These trivialisations are compatible with the descent data (i) and
  therefore give trivialisations $k\isom z^*\cL$ for every
  $z\in \kModulus^\sing$.
\item[(iii)] for every $z\in \kModulus^\reg$, a trivialisation
  $k\isom z^*\cL=k$ given by multiplication by
  $c_z$.
\end{itemize}

What is the kernel of the map \eqref{eq:singular_tori}?
Fix $y\in A$. Then multiplying
$a_x$, for $x\in\phi^{-1}(y)$, and $b_z$, for $z \in \kModulus^\sing$ such that
$g(z)=y$, by a common element of $k^\mul$ does not change the descent data (i)
or the trivialisation (ii), so we obtain the same $(\cL,\beta)$. The
equivalence relation on pairs is realised by the automorphism group
$\Gm^C$ of $\phi^*\cL=\cO_{\tilde Y}$, which acts on tuples by
\[
(d_j)_{j\in C} \colon
((a_x)_{x\in B},(b_z)_{z\in \kModulus^\sing}, (c_x)_{x\in \kModulus^\reg})
\mapsto
((d_{\psi(x)}a_x), (b_z), (d_{\theta(z)}c_z)).
\]
Therefore $G$ is the torus whose character group is the kernel of the
map
\begin{equation}
  \label{eq:pizza1}
  \ZZ[B] \oplus \ZZ[\kModulus^\sing] \oplus \ZZ[\kModulus^\reg] \to \ZZ[C] \oplus \ZZ[A] 
\end{equation}
with matrix
\[
\begin{bmatrix}
  \psi & 0 & \theta \\
  \phi & \lambda & 0
\end{bmatrix}\,.
\]
The homology complex of $\Gtilde_{Y,\kModulus}$ is
\begin{equation}
  \label{eq:pizza2}
  \ZZ[B] \oplus \ZZ[\kModulus^\sing] \oplus \ZZ[\kModulus^\reg] \to \ZZ[C] \oplus \ZZ[A]
  \oplus \ZZ
\end{equation}
with differential given by the matrix
\[
\begin{bmatrix}
  \psi & 0 & \theta \\
  -\phi & \lambda & 0 \\
  0 & -\varepsilon & -\varepsilon
\end{bmatrix}
\]
where $\varepsilon \colon \ZZ[\kModulus^?] \to \ZZ$ is the augmentation
$z\mapsto 1$, for $z\in\kModulus^?$, $?\in\{\reg,\sing\}$. There is an obvious map from the complex
\eqref{eq:pizza2} to the complex \eqref{eq:pizza1} which induces an
isomorphism on kernels. Since $G$ is by \eqref{eq:Gdef} the maximal
multiplicative quotient of $\Pic^{0,\lin}_{(Y,\kModulus)/k}$, this
gives the isomorphism \eqref{prop:charGenJac1}. The construction is Galois equivariant
by transport of structure.
\end{proof}

\subsection{Functoriality I}
\label{sec:functoriality1}

Let $g\colon \Sigma \to Y$, $g'\colon \Sigma' \to Y'$ be generalized
moduli on proper curves over $k$ as in the previous section, and suppose we
have finite morphisms $f$, $f_\Sigma$ fitting into a commutative
diagram
\begin{equation}
  \label{eq:funct1}
  \begin{tikzcd}
    \Sigma' \arrow[r, "g'"] \arrow[d, "{f_\Sigma}"]
    & Y' \arrow[d, "f"]  \\
    \Sigma \arrow[r, "g"] & Y
  \end{tikzcd}
\end{equation}
Then there is an associated pullback morphism
\[
  (f,f_\Sigma)^* \colon \Pic_{(Y,\Sigma)/k} \to \Pic_{(Y',\Sigma')/k}
\]
taking a pair $(\cL, \alpha\colon \cO_\Sigma \isom g^*\cL)$ to the
pair
\[
  \bigl(f^*\cL,\ f_\Sigma^*\alpha \colon \cO_{\Sigma'} \isom
  f_\Sigma^*g^*\cL=g^{\prime*}(f^*\cL)\bigr),
\]
which we will simply denote by $f^*$ if no confusion can arise.

To define pushforward, consider the
commutative diagram
\[
  \begin{tikzcd}
    \Sigma' \arrow[r,"h"] \arrow[dr, "f_\Sigma"'] &
    \Sigma\times_Y Y' \arrow[r, "\pr_2"] \arrow[d, "\pr_1"] &
    Y' \arrow[d, "f"] \\
    & \Sigma \arrow[r, "g"] & Y
  \end{tikzcd}
\]
We assume that $f$ is flat, and that $h$ is a closed immersion whose
ideal sheaf $\cI$ is nilpotent and satisfies
\begin{equation}
  \label{eq:pushforward-hyp}
  \norm_{\Sigma\times_YY'/\Sigma}(1+\cI) = \{1\}.
\end{equation}
We may then define a morphism
$f_* =(f,\Sigma)_* \colon \Pic_{(Y',\Sigma')/k} \to \Pic_{(Y,\Sigma)/k}$ by
$f_*\colon (\cL',\alpha') \mapsto (\cL,\alpha)$, where
$\cL=\norm_f(\cL')$, the norm of $\cL'$ \cite[6.5]{EGA2} and $\alpha$
is given as follows: if $\cI=0$ is zero, then \eqref{eq:funct1} is
Cartesian, and $\alpha$ is the composite
\[
  \alpha \colon \cO_\Sigma \xrightarrow[\norm_f(\alpha')]{\sim}
  \norm_f(g^{\prime*}\cL')
  \isom g^*\cL
\]
(the second isomorphism given by \cite[(6.5.8)]{EGA2}). In general, 
$\alpha'\colon \cO_{\Sigma'} \isom g^{\prime*}\cL'$ can at least locally be
extended to an isomorphism $\alpha'' \colon \cO_{\Sigma\times_YY'} \isom
\pr_2^*\cL'$, well-defined up to local sections of $1+\cI$. Taking
norms, we then get a well-defined global isomorphism  
$\alpha=\norm_{\Sigma\times_Y Y'/\Sigma}(\alpha'')\colon \cO_\Sigma
\isom g^*\cL$.

The maps $f^*$, $f_*$ preserve $\Pic^0$ in
all cases.

\begin{exam} \label{ex:functoriality1}
Suppose that $Y$, $Y'$ are smooth over $k$ and
absolutely irreducible, and that
$\Sigma\subset Y$, $\Sigma'\subset Y'$ are closed subschemes defined
by reduced moduli $\fm$, $\fm'$. Let $J_\fm=\Pic^0_{(Y,\Sigma)/k}$,
$J'_{\fm'}=\Pic^0_{(Y',\Sigma')/k}$,  be the associated generalized
Jacobians. Let $f\colon Y' \to Y$ be a finite morphism with $f^{-1}(\Sigma)^{\red}=\Sigma'$. Then \eqref{eq:pushforward-hyp} holds, and therefore we get morphisms
\[
f^*\colon J_{\fm} \to J'_{\fm'},\quad
f_*\colon J'_{\fm'} \to J_{\fm}.
\]
\end{exam}
If $f'\colon Y' \to Y$ is another finite morphism with
$f^{\prime-1}(\kModulus)\supset\kModulus'$, then we get an induced
endomorphism $f_*f^{\prime*}\colon J_\fm \to J_\fm$, compatible with the
usual correspondence action on $J$ (pullback along $f'$ followed by
norm with respect to $f$). For later reference, we will say
that the modulus $\fm$ is \emph{stable} under the correspondence
$f_*f^{\prime*}$. 

Returning to the general case, assume that $k$ is algebraically
closed, that $f$ is flat and that \eqref{eq:pushforward-hyp} holds.  Write
\[
\XX =\Hom(\Pic^{0,\lin}_{(Y,\Sigma)/k},\Gm),\quad
\XX' =\Hom(\Pic^{0,\lin}_{(Y',\Sigma')/k},\Gm)
\]
for the character groups of the linear parts of the generalized Picard schemes.
Then $f^*$, $f_*$ induce by functoriality homomorphisms
\begin{equation}
\label{eq:funct2}
\XX(f^*) \colon \XX' \to \XX, \quad \XX(f_*) \colon \XX \to \XX'.
\end{equation}
By Proposition \ref{prop:charGenJac} and \eqref{eq:pizza1}
 we have canonical isomorphisms
 \begin{align*}
   \XX &=\ker\biggl( \begin{bmatrix}  \psi & 0 & \theta \\
  \phi & \lambda & 0\end{bmatrix} \colon
  \ZZ[B] \oplus \ZZ[\kModulus^\sing] \oplus \ZZ[\kModulus^\reg] \to
                   \ZZ[C] \oplus \ZZ[A]
                   \biggr)\\
   &= H_1(\Gtilde_{Y,\Sigma},\ZZ)
\end{align*}
where $A=(Y^\red)^{\sing}$, $B=\phi^{-1}(A)$, $C=\pi_0(\Ytilde)$, and
similarly for $\XX'$. We now describe the maps \eqref{eq:funct2}
combinatorially, under further hypotheses. Let $A'$, $B'$, $C'$ denote
the corresponding sets for $Y'$, and assume the following.
\begin{hyps}\mbox{ }
\label{hyp:funct-simp}
\begin{enumerate}[(i)]
\item
\label{hyp:funct-simp1}
$f^{-1}(A)=A'$, and $f$ is \'etale at each point of $A'$.
\item
\label{hyp:funct-simp2}
$\Sigma^\sing=\emptyset =\Sigma^{\prime\sing}$, and
  $\Sigma$, $\Sigma'$  are reduced.
\item
\label{hyp:funct-simp3}
$\Sigma'=f^{-1}(\Sigma)^\red$.
\end{enumerate}
\end{hyps}
Hypotheses \ref{hyp:funct-simp}\eqref{hyp:funct-simp2} and
\eqref{hyp:funct-simp3} together imply that \eqref{eq:pushforward-hyp}
is satisfied.  Then $f$ induces maps $A'\to A$, $B'\to B$, $C'\to C$
which we also denote by $f$. The diagram
\begin{equation}
  \label{eq:funct3}
  \begin{tikzcd}
    A' \arrow[r, leftarrow, "\phi'"] \arrow[d, "f"]
    & B' \arrow[r, "\psi'"] \arrow[d, "f"]
    & C' \arrow[r, leftarrow, "\theta'"]\arrow[d, "f"]
    & \Sigma' \arrow[d, "f"]   \\
    A \arrow[r, leftarrow, "\phi"] & B \arrow[r, "\psi"] & C \arrow[r,
    leftarrow, "\theta"] & \Sigma 
  \end{tikzcd}
\end{equation}
commutes, and so we have a commutative square
\begin{equation}
  \label{eq:funct4}
  \begin{tikzcd}[ampersand replacement=\&, column sep=large]
    \ZZ[B']\oplus \ZZ[\Sigma']
    \arrow[r, "{\left[\begin{smallmatrix}\psi'&\theta'\\\phi'&0\end{smallmatrix}\right]}"]
    \arrow[d, "f"] \&
    \ZZ[C']\oplus \ZZ[A'] \arrow[d, "f"]  \\
    \ZZ[B]\oplus \ZZ[\Sigma]
    \arrow[r, "{\left[\begin{smallmatrix}\psi&\theta\\\phi&0\end{smallmatrix}\right]}"] 
\&     \ZZ[C]\oplus \ZZ[A] 
  \end{tikzcd}
\end{equation}

\begin{prop} \label{prop:upper-star}
  Assume Hypotheses \textup{\ref{hyp:funct-simp}}.
  The homomorphism $\XX(f^*)$ is induced by the vertical maps in \eqref{eq:funct4}.
\end{prop}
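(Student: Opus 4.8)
The plan is to unwind the definition of $\XX(f^*)$ and match it, step by step, with the map on $H_1$ induced by the vertical arrows in \eqref{eq:funct4}. The key observation is that the isomorphism $\XX = H_1(\Gtilde_{Y,\Sigma},\ZZ) = \ker\left[\begin{smallmatrix}\psi&0&\theta\\\phi&\lambda&0\end{smallmatrix}\right]$ from Proposition \ref{prop:charGenJac} was constructed via the explicit surjection \eqref{eq:singular_tori} $\Gm^B\times\Gm^{\Sigma^\sing}\times\Gm^{\Sigma^\reg}\to G$, where $G$ is the kernel of $\phi^*\colon \Pic^0_{(Y,\Sigma)/k}\to \Pic^0_{\Ytilde/k}$. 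Under Hypotheses \ref{hyp:funct-simp}, $\Sigma^\sing=\Sigma'^{\sing}=\emptyset$, so this simplifies to $\Gm^B\times\Gm^\Sigma\to G$ with $G=\ker\left[\begin{smallmatrix}\psi&\theta\\\phi&0\end{smallmatrix}\right]^\vee$, matching the shape of \eqref{eq:funct4}. So I would first reduce to understanding how $f^*\colon \Pic^0_{(Y,\Sigma)/k}\to\Pic^0_{(Y',\Sigma')/k}$ interacts with these presentations of the toric parts $G$, $G'$.

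First I would check that $f^*$ carries $G$ into $G'$: a line bundle on $Y$ trivial on $\Ytilde$ pulls back, via $f\colon Y'\to Y$ covered by $\widetilde f\colon\Ytilde'\to\Ytilde$ (which exists since normalization is functorial for the finite map $f$, and $\Ytilde'\to\Ytilde$ is the map induced on connected components by $f\colon C'\to C$), to a line bundle on $Y'$ trivial on $\Ytilde'$; and the trivialisation on $\Sigma$ pulls back to one on $\Sigma'$ via $f_\Sigma\colon\Sigma'\to\Sigma$. Then I would trace through the parametrisation: a tuple $((a_x)_{x\in B},(c_z)_{z\in\Sigma})$ defines descent data for $\cO_{\Ytilde}$ over $Y$ plus a trivialisation on $\Sigma$; pulling back along $\widetilde f$ and $f_\Sigma$ replaces $a_x$ by $a_{f(x)}$ for $x\in B'$ (because the branch $x\in B'$ at $\phi'(x)\in A'$ maps, since $f$ is étale at points of $A'$ by \ref{hyp:funct-simp}\eqref{hyp:funct-simp1}, isomorphically to the branch $f(x)\in B$ at $f(\phi'(x))\in A$, so the gluing parameter is inherited unchanged) and replaces $c_z$ by $c_{f(z)}$ for $z\in\Sigma'$. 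This says precisely that on the parametrising tori, $f^*$ is induced by the transpose of the componentwise map $f\colon B'\sqcup\Sigma'\to B\sqcup\Sigma$, i.e.\ by pullback of functions along $f$.

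Dualising, the induced map on character groups $\XX\to\XX'$ — which by construction is the restriction to kernels of the map $\ZZ[B]\oplus\ZZ[\Sigma]\to\ZZ[B']\oplus\ZZ[\Sigma']$ \emph{covariant} in $f$ on the index sets — is exactly the left vertical arrow of \eqref{eq:funct4} restricted to $\ker = \XX$. Here I should be careful with variance: $f^*$ on Picard schemes induces $\XX(f^*)\colon\XX'\to\XX$, so on character groups the direction is $\XX'\to\XX$, and it is the map $f\colon\ZZ[B']\oplus\ZZ[\Sigma']\to\ZZ[B]\oplus\ZZ[\Sigma]$ of \eqref{eq:funct4} (pushforward on free abelian groups on the index sets, which is dual to pullback of functions), restricted to the kernels. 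Commutativity of \eqref{eq:funct4} guarantees this restricts, so $\XX(f^*)$ is well-defined and equals the asserted map. I would then note the identification with $H_1(\Gtilde_{Y',\Sigma'})\to H_1(\Gtilde_{Y,\Sigma})$ is automatic since \eqref{eq:funct4} is, by \eqref{eq:funct3} and \eqref{eq:pizza2}, the truncation of the map of homological complexes of the two extended graphs induced by the graph morphism sending $v_0'\mapsto v_0$.

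The main obstacle I anticipate is the bookkeeping at step two: verifying that the gluing parameter $a_x$ is genuinely \emph{inherited unchanged} under pullback, which is where Hypothesis \ref{hyp:funct-simp}\eqref{hyp:funct-simp1} (that $f$ is étale at each point of $A'$, so each branch upstairs maps isomorphically to a branch downstairs and $\phi^{-1}(y)$ is in bijection with $\phi'^{-1}$ of each point over $y$) is essential — without étaleness the two branches over a point of $A$ could merge or ramify and the descent datum would transform nontrivially. One must also confirm that the automorphism torus $\Gm^C$ realising the equivalence relation on pairs is compatible with $f\colon C'\to C$ (so that the map descends to the quotient $G$, not just the parametrising torus), which follows since $\widetilde f$ on connected components is exactly the map $C'\to C$. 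Once these compatibilities are in hand, the rest is a diagram chase and the identification of \eqref{eq:funct4} with a truncated map of graph complexes.
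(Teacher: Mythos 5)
Your proposal is correct and follows essentially the same route as the paper: reduce (implicitly, via the parametrisation of Proposition \ref{prop:charGenJac}) to the seminormal/reduced situation, trace the pullback of the descent data $((a_x),(c_z))$ through $\widetilde f$ and $f_\Sigma$ using the \'etaleness of $f$ at $A'$ to see that $a'_{x'}=a_{f(x')}$ and $c'_{z'}=c_{f(z')}$, check compatibility with the automorphism torus $\Gm^C\to\Gm^{C'}$, and dualise. The only point the paper makes explicit that you leave implicit is the verification (via \cite[Prop.~5.1]{GT80}) that the induced map on seminormalisations is still flat, which is harmless here and mainly matters for the companion pushforward statement.
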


\begin{proof}
  We first observe that we may assume in addition that $Y$ and $Y'$
  are seminormal (and therefore reduced). Indeed, the descriptions of the
  character groups $\XX$ and $\XX'$ is unchanged after replacing the
  curves by their seminormalisations. It remains to verify that the
  induced map on seminormalisations $f^{\sn} \colon Y^{\prime\sn} \to
  Y^{\sn}$ is flat. But $Y^{\sn}\setminus A = Y^{\red}\setminus A$ is
  smooth, and so the restriction of $f^{\sn}$ to
  $Y^{\prime\sn}\setminus A'$ is automatically flat. By hypothesis,
  there is a neighbourhood $U\subset Y$ of $A$ such that $f \colon
  U'\defeq f^{-1}(U) \to U$ is \'etale. Then by \cite[Prop. 5.1]{GT80},
  we have a Cartesian square
  \[
    \begin{tikzcd}
      U^{\prime\sn} \arrow[r] \arrow[d, "{f^{\sn}}"]
      & U' \arrow[d, "{f}"]  \\
      U^{\sn} \arrow[r] & U
    \end{tikzcd}
  \]
  and in particular $f^{\sn}$ restricted to $U^{\prime\sn}$ is
  \'etale.

  We now compute the dual map $f^* \colon \Pic^{0,\lin}_{(Y,\Sigma)/k}
  \to \Pic^{0,\lin}_{(Y',\Sigma')/k}$ which is a morphism of tori
  (since we are assuming that $Y$ and $Y'$ are seminormal and
  $\Sigma$, $\Sigma'$ are reduced). As explained in
  \S\ref{sec:singular2}, a $k$-point of $\Pic^{0,\lin}_{(Y,\Sigma)/k}$
  is represented by a pair $((a_x)_{x\in B},(c_z)_{z\in \Sigma}) \in
  (k^\mul)^B\times (k^\mul)^\Sigma$, where $(a_x)$ determines
  descent data for $\cO_{\Ytilde}$ with respect to the normalisation
  morphism $\phi\colon\Ytilde \to Y$, and $(c_z)$ determines
  trivialisations $\times c_z\colon k\xrightarrow{\sim}k=z^*\cO_{\Ytilde}$ which
  descend to a rigidification along $\Sigma$ of the descended line
  bundle.

  Let $a'_{x'}=a_{f(x')}$ ($x'\in B'$) and $c'_{z'}=c_{f(z')}$ ($z'\in
  \Sigma'$). Then if $(\cL,\alpha) \in
  \Pic^{0,\lin}_{(Y,\Sigma)/k}(k)$ is represented by the pair $((a_x),
  (c_z))$, the pullback $f^*(\cL,\alpha)$ is represented by
  $((a'_{x'}),(c'_{z'}))$. The obvious map
  \[
    \Aut\cO_{\Ytilde}=(k^\mul)^C  \to
    \Aut\cO_{\Ytilde'}=(k^\mul)^{C'}
  \]
  is induced by $f\colon C'\to C$,
  and therefore $\XX(f^*)$ is induced by the vertical maps $f$ in
  \eqref{eq:funct4} as required.  
\end{proof}

We now compute $\XX(f_*)$.  Let
\[
  f^*\colon
  \begin{cases}
    \ZZ[\Sigma] \to \ZZ[\Sigma']&\\[0.2ex]
    \ZZ[A] \to \ZZ[A']&\\[0.2ex]
    \ZZ[B] \to \ZZ[B']&
  \end{cases}
\]
be the inverse image maps on divisors. By Hypothesis
\ref{hyp:funct-simp}(\ref{hyp:funct-simp1}), this means 
that if $x\in A$ or $x\in B$, then $f^*\colon (x) \mapsto
\sum_{f(x')=x}(x')$, and if $z\in\Sigma$, then
\[
  f^*\colon (z) \mapsto \sum_{f(z')=z} r_{z'/z}(z')
\]
where $r_{z'/z}$is the ramification degree of $f$ at $z'$. Finally,
define $f^*\colon \ZZ[C] \to \ZZ[C']$ by
\[
  f^*\colon (Z) \mapsto \sum_{f(Z')=Z} [\kappa(Z'):\kappa(Z)]\,(Z')
\]
where $Z\subset \Ytilde$, $Z'\subset \Ytilde'$ are connected components.
These maps fit into the  diagram
\begin{equation}
  \label{eq:funct5}
  \begin{tikzcd}[ampersand replacement=\&, column sep=large]
    \ZZ[B]\oplus \ZZ[\Sigma]
    \arrow[r, "{\left[\begin{smallmatrix}\psi&\theta\\\phi&0\end{smallmatrix}\right]}"] \arrow[d, "f^*"]  \&
    \ZZ[C]\oplus \ZZ[A] \arrow[d, "f^*"]  \\
    \ZZ[B']\oplus \ZZ[\Sigma']
    \arrow[r, "{\left[\begin{smallmatrix}\psi'&\theta'\\\phi'&0\end{smallmatrix}\right]}"] \&
    \ZZ[C']\oplus \ZZ[A'] \,.
  \end{tikzcd}
\end{equation}

\begin{prop} \label{prop:lower-star}
  Assume Hypotheses \textup{\ref{hyp:funct-simp}}.
  The diagram \eqref{eq:funct5} is commutative, and the vertical maps
  induce the homomorphism $\XX(f_*)\colon \XX \to \XX'$.
\end{prop}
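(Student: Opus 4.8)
The plan is to mimic the proof of Proposition \ref{prop:upper-star}, working with explicit cocycle representatives and then tracking how the norm (pushforward) construction of \S\ref{sec:functoriality1} acts on them. As in the proof of Proposition \ref{prop:upper-star}, I would first reduce to the case where $Y$ and $Y'$ are seminormal (hence reduced) and $k$ is algebraically closed: the character groups $\XX$, $\XX'$ are unchanged under seminormalisation, and one checks as there (using \cite[Prop.\ 5.1]{GT80} and Hypothesis \ref{hyp:funct-simp}\eqref{hyp:funct-simp1}, i.e.\ that $f$ is \'etale near $A$) that $f^{\sn}$ remains flat, so the pushforward $f_*$ is still defined and still preserves $\Pic^0$. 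After this reduction $\Pic^{0,\lin}_{(Y,\Sigma)/k}$ and $\Pic^{0,\lin}_{(Y',\Sigma')/k}$ are tori, and $\XX(f_*)$ is simply the map on cocharacter-dual lattices induced by the torus morphism $f_*$.

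Next I would verify the commutativity of \eqref{eq:funct5}. This is a routine but essential bookkeeping check: on $\ZZ[A]$ and $\ZZ[B]$, the maps $\phi$, $\psi$, $\phi'$, $\psi'$ are induced by the normalisation maps, and $f^*$ is the naive preimage of points; commutativity on these summands follows from the commutativity of \eqref{eq:funct3} together with Hypothesis \ref{hyp:funct-simp}\eqref{hyp:funct-simp1} (so that $f$ is unramified along $A$, $B$, and every preimage occurs with multiplicity one). On $\ZZ[\Sigma]$ one uses that the ramification degrees $r_{z'/z}$ appearing in $f^*$ on $\ZZ[\Sigma]$ match the degrees $[\kappa(Z'):\kappa(Z)]$ appearing in $f^*$ on $\ZZ[C]$ when $\Sigma\to C$ factors through $\theta$; this is where $\theta$, $\theta'$ and the fact that $f$ may be ramified on $\Ytilde\setminus B$ enter, and it is the one place the definitions are genuinely coordinated rather than trivial.

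The heart of the argument is identifying the torus morphism $f_*$ on $k$-points in the cocycle description of \S\ref{sec:singular2}. A $k$-point of $\Pic^{0,\lin}_{(Y',\Sigma')/k}$ is represented by $((a'_{x'})_{x'\in B'},(c'_{z'})_{z'\in\Sigma'})$, giving descent data for $\cO_{\Ytilde'}$ along $\phi'$ and a rigidification along $\Sigma'$; I must show that $f_*$ of this class is represented by the tuple whose $B$-component, at $x\in B$, is $\prod_{f(x')=x} a'_{x'}$ and whose $\Sigma$-component, at $z\in\Sigma$, is $\prod_{f(z')=z} (c'_{z'})^{r_{z'/z}}$ — i.e.\ exactly the transpose of $f^*$ from \eqref{eq:funct5}. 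For the $B$-part: the norm $\norm_{f^{\sn}}$ of the line bundle $\cL'$ with descent data $(a'_{x'})$ is computed \'etale-locally near each $y\in A$, where $f^{\sn}$ is \'etale by the reduction step, so $\norm_f$ of a transition function is literally the product over the fibre, yielding the $B$-component product; the compatibility isomorphism \cite[(6.5.8)]{EGA2} used to define $\alpha$ on $\Sigma^\sing=\emptyset$ contributes nothing here since $\Sigma$ is reduced with $\Sigma^\sing=\emptyset$. For the $\Sigma$-part: $\alpha=\norm_{\Sigma\times_YY'/\Sigma}(\alpha')$, and since $\Sigma$ is reduced and $f$ is finite flat, the norm of the trivialisation $\times c'_{z'}$ over the points $z'$ above $z$ is multiplication by $\prod_{f(z')=z}(c'_{z'})^{\,\mathrm{length}}$, and the relevant length at $z'$ is precisely $r_{z'/z}$. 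Dualising this description of $f_*$ then gives exactly the vertical maps in \eqref{eq:funct5}, completing the proof.

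The main obstacle I expect is the $\Sigma$-part of the last step: pinning down that the local multiplicity of $\Sigma\times_Y Y'$ over $\Sigma$ at $z'$ equals the ramification degree $r_{z'/z}$ of $f$ at $z'$, uniformly, and that the norm of a unit trivialisation is then the product of its $r_{z'/z}$-th powers. This requires care because $\Sigma'=f^{-1}(\Sigma)^{\red}$ is the \emph{reduced} preimage while $\Sigma\times_Y Y'$ need not be reduced, so one is genuinely using hypothesis \eqref{eq:pushforward-hyp} (guaranteed by Hypotheses \ref{hyp:funct-simp}\eqref{hyp:funct-simp2}--\eqref{hyp:funct-simp3}) to make the extended trivialisation $\alpha''$ well-defined modulo $1+\cI$ and to make its norm independent of the choice; this is exactly the subtlety the pushforward construction in \S\ref{sec:functoriality1} was built to handle, so the work is in unwinding that construction rather than in any new idea.
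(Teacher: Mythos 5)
Your proposal is correct and follows essentially the same route as the paper: reduce to the seminormal case, compute $f_*$ on cocycle representatives via norms (product over fibres on the $B$-part by \'etaleness at $B'$, and $r_{z'/z}$-th powers on the $\Sigma$-part from the structure of the scheme-theoretic preimage $f^{-1}(\Sigma)$), and dualise. The only cosmetic difference is that you appeal to the a priori well-definedness of $f_*$ where the paper instead explicitly computes how the automorphism torus $(k^\mul)^{C'}$ transforms under the norm — which is where the multiplicities $[\kappa(Z'):\kappa(Z)]$ on the $\ZZ[C]$-component are pinned down — but the counting identities you cite for the commutativity of \eqref{eq:funct5} are exactly the ones needed there.
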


\begin{proof}
As in \ref{prop:upper-star}, we may assume that $Y$ and $Y'$ are
seminormal. Consider again the dual map of tori $f^*\colon \Pic^{0,\lin}_{(Y',\Sigma')/k}
  \to \Pic^{0,\lin}_{(Y,\Sigma)/k}$.  Let $(\cL',\alpha')\in
  \Pic^{0,\lin}_{(Y',\Sigma')/k}(k)$, represented by the pair
  $((a'_{x'})_{x'\in B'}, (c'_{z'})_{z'\in \Sigma'})$. Since $f$ is
  \'etale at $B'$, the normalised map $\tilde f \colon
  \Ytilde'\to\Ytilde$ induces a norm homomorphism
  \[
    \norm_{\tilde f} \colon
    \Gamma(\phi^{\prime-1}(Y^{\prime\sing}),\cO^\mul) = (k^\mul)^{B'}
    \to \Gamma(\phi^{-1}(Y^{\sing}),\cO^\mul) = (k^\mul)^{B}
  \]
  which equals the homomorphism $f_! \colon (k^\mul)^{B'} \to
  (k^\mul)^{B}$ given by
  \[
    f_!\colon (a'_{x'})_{x'\in B'} \mapsto (a_x)_{x\in B},\quad a_x=
    \prod_{f(x')=x} a'_{x'}.
  \]
  The analogous statement holds for
  \[
    \norm_{f} \colon
    \Gamma(Y^{\prime\sing},\cO^\mul) = (k^\mul)^{A'}
    \to \Gamma(Y^{\sing},\cO^\mul) = (k^\mul)^{A}\,.
  \]
  Since $f$ is \'etale at $A'$, the square
  \[
    \begin{tikzcd}
      B' \arrow[r, "{\phi'}"] \arrow[d, "f"]
      & A' \arrow[d, "f"]  \\
      B \arrow[r, "\phi "] & A
    \end{tikzcd}
  \]
  is in fact Cartesian, and therefore
  \[
    \phi^*\circ f_! = f_! \circ \phi^{\prime*} \colon (k^\mul)^{B'}
    \to (k^\mul)^A\,.
  \]
  Next, we consider the rigidification $\alpha'\colon \cO_{\Sigma'}
  \isom g^{\prime*}\cL=g^{\prime*}\cO_{\Ytilde}=\cO_{\Sigma'}$ given
  by multiplication by $(c'_{z'}) \in (k^\mul)^{\Sigma'}$. Let
  $\Sigma'' =f^{-1}(\Sigma)$ be the scheme-theoretic inverse image of
  $\Sigma$. So $\Sigma''=\coprod_{z'\in\Sigma'} \tilde z'$ say, where
  $\tilde z'\simeq\Spec k[t]/(t^{r_{z'/z}})$. According to (ii) above,
  to compute $f_*(\cL',\alpha')$ we need to extend $\alpha'$ to a
  rigidification
  \[
    \alpha'' \colon \cO_{\Sigma''}  \isom
    \cL'|_{\Sigma''}=\cO_{\Sigma''}
  \]
  and we may as well take $\alpha''$ to be the sum of the maps
  $\cO_{\tilde z'} \isom \cO_{\tilde z'}$ given by multiplication by
  $c'_{z'}$. Then $\norm(\alpha'')\colon \cO_\Sigma \isom \cO_\Sigma$
  is multiplication by $(c_z)=\hat f_!(c'_{z'})$, where $\hat
  f_!\colon (k^\mul)^{\Sigma'} \to (k^\mul)^\Sigma$ is the map
  \[
    \hat f_! \colon (c'_{z'}) \mapsto (c_z),\quad c_z=\prod_{f(z')=z}
    (c'_{z'})^{r_{z'/z}}
  \]
  whose dual is the map $f^*\colon \ZZ[\Sigma'] \to \ZZ[\Sigma]$
  defined above.

  Finally we need to compute the action of
  $\Aut \cO_{\Ytilde'}=(k^\mul)^{C'}$. From \S\ref{sec:singular2} we
  know that $d'\in (k^\mul)^{C'}$ maps
  $((a'_{x'})_{x'\in B'}, (c'_{z'})_{z'\in\Sigma'})$ to
  $((d'_{\psi'(x')}a'_{x'}), (d'_{\theta'(z')}c'_{z'}))$, which under
  the norm maps to
  \begin{equation}
    \label{eq:funct6}
    \biggl( \Bigl(\prod_{\tilde f(x')=x} d'_{\psi'(x')}a'_{x'}\Bigr)_{x\in B},
      \Bigl(\prod_{\tilde f(z')=z}
      (d'_{\theta'(z')}c'_{z'})^{r_{z'/z}}\Bigr)_{z\in \Sigma}
    \biggr)\,.
  \end{equation}
  Let $x\in B$ be fixed. Then if $Z=\psi(x)\in C$ is the component
  containing $x$, and $Z'\in C'$ is a component of $\Ytilde'$ lying
  over $Z$, the set $f^{-1}(x)\cap Z'$ has cardinality
  $[\kappa(Z'):\kappa(Z)]$, since $f$ is \'etale at
  $f^{-1}(x)$. Therefore
  \[
    \prod_{\tilde f(x')=x} d'_{\psi'(x')} =
    \prod_{\substack{Z'\in C'\\ f(Z')=\psi(x)}}
    (d'_{Z'})^{[\kappa(Z'):\kappa(Z)]}\,.
  \]
  Similarly, let $z\in \Sigma$ be fixed, and $Z=\theta(z)\in C$ the
  component of $\Ytilde$ containing it. Then if $Z'\in C'$ is a
  component of $\Ytilde'$ lying over $Z$,
  \[
    \sum_{z'\in f^{-1}(z)\cap Z'} r_{z'/z} = [\kappa(Z'):\kappa(Z)]
  \]
  and therefore
  \[
    \prod_{\tilde f(z')=z} (d'_{\theta'(z')})^{r_{z'/z}}
    = \prod_{\substack{Z'\in C'\\ f(Z')=\theta(z)}}
    (d'_{Z'})^{[\kappa(Z'):\kappa(Z)]}\,.
  \]
  In other words, the pair \eqref{eq:funct6} equals
  \[
    (d_{\psi(x)}(f_!a')_x, d_{\theta(Z)}(\hat f_!c')_Z)
  \]
  where
  \[
    d_Z = \prod_{\substack{Z'\in C'\\ f(Z')=Z}}
    (d'_{Z'})^{[\kappa(Z'):\kappa(Z)]}\,.
  \]
  The dual of this map $d'\mapsto d$ is therefore the homomorphism
  $f^*\colon \ZZ[C] \to \ZZ[C']$ defined above.
\end{proof}

\subsection{Generalized Jacobians over DVRs}
\label{sec:hard-bit}

We resume the notations and hypotheses of \S\ref{sec:raynaud}. Let
$(x_i)_{i\in I}$ be a nonempty finite family of distinct closed points
of $X$, whose residue fields $\F_i$ are separable over $\F$. Let
$\fm=\sum_{i\in I}(x_i)$ be the associated  modulus on $X$, and
$J_\fm=\Pic^0_{(X,\fm)/\F}$ the generalized Jacobian of $X$ with
respect to $\fm$. The semiabelian variety $J_\fm$ is an extension of
$J$ by the torus
\[
  T_\fm=\Bigl(\prod_{i\in I} \R_{\F_i/\F}\Gm\Bigr)/\Gm.
\]
Write $\cJ_\fm$ for the N\'eron model of $J_\fm$.

Let $R_i$ be the integral closure of $R$ in $\F_i$. Then the inclusion
of the points $(x_i)$ in $X$ extends to a unique morphism
\[
 \Sigma\defeq \coprod_{i\in I}\Spec R_i \xrightarrow{g} \cX.
\]
As $\Gamma(\cX_s, \cO_{\cX_s} ) = k$, the special fibre
$g_s \colon \Sigma_s \to \cX_s$ is a generalized modulus, in the sense
of the previous section.  By Proposition \ref{prop:tori-neron}(b) the N\'eron model $\cT_\fm$ of $T_\fm$ equals
$(\R_{\Sigma/S} \cGm )/\cGm$, and its identity subgroup is
$\cT_\fm^0 = (\R_{\Sigma/S}\Gm )/\Gm$. 
\begin{lem}
  The pair $(\Sigma, g)$ is a rigidifier \textup{\cite[(2.1.1)]{Ray70}} of
  $\Pic_{\cX/S}$.
\end{lem}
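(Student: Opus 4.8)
The plan is to verify directly the defining property of a rigidifier of $\Pic_{\cX/S}$ \cite[(2.1.1)]{Ray70}: that $\Sigma$ is finite and flat over $S$, and that for every $S$-scheme $S'$ the pullback $g_{S'}^*\colon\Gamma(\cX\times_S S',\cO)\to\Gamma(\Sigma\times_S S',\cO)$ is injective. Finiteness and flatness are immediate: each $R_i$, being the integral closure of the Noetherian domain $R$ in the finite separable extension $\F_i/\F$, is a finite $R$-module, and being a domain containing $R$ it is $R$-torsion-free, hence flat over the discrete valuation ring $R$; moreover $\Spec R_i\to S$ is surjective since $R_i/\fm R_i\ne 0$ by Nakayama. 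Thus $\Sigma\to S$ is finite, faithfully flat, and affine.

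For the injectivity I would exploit the cohomological flatness of $\cX/S$. Write $f\colon\cX\to S$ for the structure morphism. By hypothesis (H3), together with (H1--2), $\cX/S$ is cohomologically flat in dimension $0$ \cite[(7.2.1)]{Ray70}, so the formation of $f_*\cO_\cX$ commutes with arbitrary base change $S'\to S$; since $\Gamma(\cX,\cO_\cX)=R$ by (H1) this gives $f_*\cO_\cX=\cO_S$, and hence $(f_{S'})_*\cO_{\cX\times_S S'}=\cO_{S'}$ for every $S'$. Taking global sections identifies $\Gamma(\cX\times_S S',\cO)$ with $\Gamma(S',\cO_{S'})$, and under this identification $g_{S'}^*$ becomes the pullback $\Gamma(S',\cO_{S'})\to\Gamma(\Sigma\times_S S',\cO)$ along the structure morphism $\Sigma\times_S S'\to S'$ (using that $g$ is an $S$-morphism). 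Since $\Sigma\to S$ is affine and faithfully flat, so is $\Sigma\times_S S'\to S'$; injectivity of the corresponding map on global sections may be checked Zariski-locally on $S'$, where it is the injectivity of a faithfully flat ring homomorphism. This proves the lemma.

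I expect the only delicate point to be the appeal to cohomological flatness: one must use that (H3) gives cohomological flatness \emph{in dimension~$0$}, which is precisely what forces $(f_{S'})_*\cO=\cO_{S'}$ after an \emph{arbitrary} base change $S'$, and not merely fibrewise. Granting that, the argument is the relative counterpart of Lemma \ref{lem:rigid}: on the special fibre it specializes to the assertion that $(\Sigma_s,g_s)$ is a rigidifier of $\Pic_{\cX_s/k}$, which holds because $\Gamma(\cX_s,\cO_{\cX_s})=k$, the curve $\cX_s$ is connected, and $\Sigma_s\ne\emptyset$ since $I\ne\emptyset$.
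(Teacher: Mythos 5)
Your proof is correct and follows essentially the same route as the paper's: both use cohomological flatness (via (H1)--(H3)) to identify $\Gamma(\cX\times_S S',\cO)$ with $\Gamma(S',\cO_{S'})$, and then show that the map to $\Gamma(\Sigma\times_S S',\cO)$ remains injective after arbitrary base change. The only (cosmetic) difference is in that last step: the paper observes that $R\to\Gamma(\Sigma,\cO_\Sigma)$ is a split injection of $R$-modules, whereas you invoke faithful flatness of the affine morphism $\Sigma\to S$ and check injectivity Zariski-locally --- both arguments establish the same universal injectivity and are equally valid.
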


\begin{proof}
  Let $S'$ be any $S$-scheme. Since $\cX/S$ is cohomologically flat
  and $\Sigma$ is flat over $S$, we have
  \begin{gather*}
    \Gamma(X \times_S S', \cO_{\cX\times_S S'}) = \Gamma(S', \cO_{S'})\quad\text{and}
  \\
  \Gamma(\Sigma \times_S S',\cO_{\Sigma\times_S S'}) = \Gamma(\Sigma, \cO_\Sigma) \otimes_R
  \Gamma(S' , \cO_{S'}).
\end{gather*}
  As $\Sigma$ is nonempty,
  $R \to \Gamma(\Sigma, \cO_\Sigma)$ is a split injection of
  $R$-modules, and therefore
  $\Gamma(\cX \times_S S', \cO_{\cX\times_S S'}) \to \Gamma(\Sigma
  \times_S S', \cO_{\Sigma\times_S S'})$ is injective.
\end{proof}

Let $P_\Sigma$ denote the rigidified Picard functor of
\cite[(2.1)]{Ray70}: for any $S$-scheme $S'$, $P_\Sigma(S')$ is the group
of equivalence classes of pairs $(\cL, \alpha)$, where $\cL$ is a line
bundle on $\cX \times_S S'$, and
$\alpha \colon \cO_{\Sigma\times_SS'}\isom (g \times \id_{S'})^* \cL$
is a trivialisation. Pairs $(\cL, \alpha)$ and $(\cL', \alpha')$ are
equivalent if there exists an isomorphism
$\sigma\colon \cL \isom \cL'$ such that
$\alpha' = (g \times \id_{S'})^*(\sigma) \circ \alpha$. By \cite[(2.3.1--2)]{Ray70},
$P_\Sigma$ is a smooth algebraic space in groups over $S$, and we have
an exact sequence of algebraic spaces in groups \cite[(2.4.1)]{Ray70}
\begin{equation*}
  0 \to \cT_\fm^0 = \R_{\Sigma/S} \Gm /\Gm \to P_\Sigma
  \xrightarrow{r} P \to 0
\end{equation*}
where $r$ is the ``forget the rigidification'' functor. (Since $\cX/S$
is cohomologically flat and $f_*\cO_\cX=\cO_S$, one has
$\Gamma_X^*=\Gm$.)  If $S$ is strictly Henselian, $P_\Sigma$ is a
scheme; indeed, $P$ is a scheme, and $\cT_\fm^0$ is affine, so by flat
descent for affine schemes \cite[tag 0245]{Stacks}, the
$\cT_\fm^0$-torsor $P_\Sigma$ over $P$ is representable.

Define the sheaf $P_\fm$ to be the pushout of fppf sheaves:
\begin{equation}
\label{eq:PSigma}
  \begin{tikzcd}
    0 \arrow[r] & \cT_\fm^0 \arrow[r] \arrow[d, hook]
    & P_\Sigma\arrow[r, "r"] \arrow[d, hook]
    & P \arrow[r] \arrow[d, equal]& 0
    \\
    0 \arrow[r] & \cT_\fm \arrow[r] & P_\fm \arrow[r, "r'"] & P \arrow[r] & 0 
  \end{tikzcd}
\end{equation}
Explicitly, $P_\fm$ is the sheafification of the functor on
$S$-schemes
\begin{equation}
  \label{eq:Pfm-def}
  S' \mapsto \cT_\fm^0(S') \backslash (P_\Sigma(S')\times
  \cT_\fm(S'))
\end{equation}
where $\cT_\fm^0(S')$ acts on the product by $a(b, c) = (ab, a^{-1}c)$.

\begin{prop}
  \label{prop:Pm-smooth}
$P_\fm$ is a smooth algebraic space in groups over $S$. If $S$ is
strictly Henselian, $P_\fm$ is represented by a smooth $S$-group scheme.
\end{prop}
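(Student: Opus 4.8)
The plan is to deduce both assertions from the pushout diagram \eqref{eq:PSigma} together with what has already been established about its constituents. The key inputs are: $P_\Sigma$ is a smooth algebraic space in groups over $S$ (by \cite[(2.3.1--2)]{Ray70}, as recalled just above), $\cT_\fm^0$ is smooth (being $\R_{\Sigma/S}\Gm/\Gm$, a quotient of a smooth group scheme by a smooth closed subgroup, cf.\ Proposition \ref{prop:tori-neron}(b) and \S1.1), and $\cT_\fm$ is the N\'eron model of the torus $T_\fm$, hence a smooth separated $S$-group scheme. First I would observe that $P_\fm$ is, by construction, the fppf sheafification of the functor \eqref{eq:Pfm-def}, which exhibits it as the quotient $\bigl(P_\Sigma\times_S \cT_\fm\bigr)/\cT_\fm^0$ for the anti-diagonal action; equivalently, $P_\fm = P_\Sigma \wedge^{\cT_\fm^0} \cT_\fm$ is the contracted product (pushout of the $\cT_\fm^0$-torsor $P_\Sigma\to P$ along the closed immersion $\cT_\fm^0\hookrightarrow\cT_\fm$). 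Since the quotient of a smooth algebraic space in groups by a flat closed subgroup scheme is again a smooth algebraic space in groups (the quotient map is an fppf torsor, and smoothness descends along fppf covers), $P_\fm$ is a smooth algebraic space in groups over $S$. This gives the first sentence.

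For the second sentence, assume $S$ is strictly Henselian. Then, as noted in the paragraph preceding the proposition, $P_\Sigma$ is already a scheme: $P = \Pic_{\cX/S}$ is a scheme by \cite{Ray70} under (H1--3), and $P_\Sigma$ is a torsor under the affine group scheme $\cT_\fm^0$ over $P$, hence representable by flat descent for affine morphisms \cite[tag 0245]{Stacks}. Now I would represent $P_\fm$ as the quotient of the $S$-scheme $P_\Sigma\times_S\cT_\fm$ by the free action of the affine flat group scheme $\cT_\fm^0$. The cleanest route is to exhibit $P_\fm \to P$ as a $\cT_\fm$-torsor: from the lower row of \eqref{eq:PSigma}, $P_\fm$ is the pushout of the $\cT_\fm^0$-torsor $P_\Sigma$ along $\cT_\fm^0\hookrightarrow\cT_\fm$, so it is the $\cT_\fm$-torsor over $P$ obtained by extension of structure group. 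The obstruction to representability of a $\cT_\fm$-torsor lies in the non-affineness of $\cT_\fm$ (it is an lft N\'eron model of a torus, in general not of finite type, hence not affine over $S$ when the torus is ramified). So flat descent for affine morphisms does not apply directly, and this is the main obstacle.

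To get around it I would work \'etale-locally on the base of the torsor. The torsor $P_\Sigma\to P$ for the \emph{affine} group $\cT_\fm^0$ is, by the same descent argument, Zariski-(even \'etale-)locally trivial over $P$; choose an \'etale (or open) cover $\{U_j\to P\}$ trivializing it. Over each $U_j$ we then have $P_\fm\times_P U_j \simeq \cT_\fm\times_S U_j$, which is a scheme (a product of the scheme $\cT_\fm$ with the scheme $U_j$); and the gluing is by translations by sections of $\cT_\fm^0$, i.e.\ by morphisms of schemes. Hence $P_\fm$ is obtained by gluing schemes along open (or \'etale) pieces via scheme morphisms, so it is a scheme; being a smooth algebraic space in groups by the first part, it is a smooth $S$-group scheme. (Alternatively, and more slickly: $P_\Sigma\times_S\cT_\fm$ is an $S$-scheme on which the affine flat group $\cT_\fm^0$ acts freely with the quotient morphism being an fppf $\cT_\fm^0$-torsor; the quotient of a scheme by a free action of an affine flat group scheme, with fppf-locally-trivial quotient map, is a scheme — again reduce to the affine-descent statement \cite[tag 0245]{Stacks} by passing to a cover trivializing the $\cT_\fm^0$-torsor $P_\Sigma\times_S\cT_\fm\to P_\fm$.) Either way the smoothness has already been secured, and the essential point is simply that replacing the possibly non-affine $\cT_\fm$ by the affine $\cT_\fm^0$ in the descent is harmless because the extra part $\cT_\fm/\cT_\fm^0 \simeq \pi_0$ is (an \'etale scheme, hence) unproblematic.
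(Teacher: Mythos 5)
Your first assertion (that $P_\fm$ is a smooth algebraic space in groups over $S$) is correct as you argue it, though it runs through a heavier input than the paper needs: you realise $P_\fm$ as the contracted product $(P_\Sigma\times_S\cT_\fm)/\cT_\fm^0$ and invoke the general theorem that the fppf quotient of an algebraic space by a free action of a flat, finitely presented group scheme is again an algebraic space, with smoothness descending along the torsor map. That part is fine. The gap is in the second assertion. You trivialise the $\cT_\fm^0$-torsor $P_\Sigma\to P$ over an \'etale (or Zariski) cover $\{U_j\to P\}$, observe that $P_\fm\times_P U_j\simeq\cT_\fm\times_S U_j$ is a scheme, and conclude that $P_\fm$ is a scheme because it is ``glued from schemes''. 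That conclusion does not follow: gluing along an \'etale cover is \'etale descent, and \'etale descent of schemes is not effective in general --- ineffectivity is precisely how non-scheme algebraic spaces arise, and effectivity is only guaranteed for quasi-affine (or relatively ample) descent data, which $\cT_\fm\times_S U_j\to U_j$ is not, since $\cT_\fm$ is not of finite type. The Zariski fallback is also unjustified: $\cT_\fm^0=\R_{\Sigma/S}\Gm/\Gm$ is not a special group, and the obstruction to Zariski-local triviality of a $\cT_\fm^0$-torsor is a Brauer class, which need not die Zariski-locally. Finally, your ``slicker'' alternative --- that the quotient of a scheme by a free action of a flat affine group scheme is always a scheme --- is false in general (Hironaka-type examples already with $G=\ZZ/2\ZZ$).

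The paper supplies exactly the missing idea. It decomposes $\cT_\fm=\bigcup_{h\in\comp{T}}U_h$ into the fibres of $\pi\colon\cT_\fm\to s_*\comp{T}$; each $U_h$ is an \emph{affine open} subscheme, namely the translate of $\cT_\fm^0$ by a lift $\hat h\in\cT_\fm(S)=T_\fm(\F)$. Consequently $P_\fm$ is a union of copies of $P_\Sigma$ indexed by $\comp{T}$, glued along their generic fibres (which are open, since $\Spec\F$ is open in $S$) via translation by $\hat h$. This is a Zariski gluing, so both assertions follow at once from the corresponding facts for $P_\Sigma$. Your \'etale-local computation could be repaired by the same observation --- the descent data on $\cT_\fm\times_S U_j$ preserves each coset $U_h\times_S U_j$, which \emph{is} affine over $U_j$, so descent is effective piece by piece --- but some such reduction to the $\cT_\fm^0$-cosets of $\cT_\fm$ is indispensable and is absent from your argument as written.
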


\begin{proof}
  We have an exact sequence
  $0 \to \cT_\fm^0 \to \cT_\fm \xrightarrow{\pi} s_* \comp{T} \to 0$ of
  $S$-group schemes. For $h \in \comp{T} = (s_*\comp{T})(S)$, let
  $U_h = \pi^{-1}(h)$, an affine open subscheme of $\cT_\fm$. Then
  $\cT_\fm$ is the union of the $U_h$, glued along their generic
  fibres. If $\hat h \in \cT_\fm(S) = T(\F)$ is any lift of $h$, then
  $U_h$ is the translate of $\cT_\fm^0$ by $\hat h$.  Therefore
  $P_\fm$ is the union of copies of $P_\Sigma$ indexed by $\comp{T}$,
  glued along their generic fibres by the isomorphism given by
  translation by $\hat h \in P_\Sigma(\F)$, and the result follows
  from the corresponding statement for $P_\Sigma$.
\end{proof}

This result implies that $P_\fm$ is determined by its restriction to
$(Sm/S)$, the category of essentially smooth $S$-schemes. We can
describe this functor explicitly. Let $\cF^*$ be the functor on
$(Sm/S)$ whose value on $S'$ is the group of equivalence classes of
pairs $(\cL, \beta = (\beta_i )_{i\in I})$, where $\cL$ is a line bundle
on $\cX \times_S S'$ and for each $i \in I$,
$\beta_i \colon \cO_{S'} \otimes_R \F_i \isom (x_i \times
\id_{S'})^*\cL$ is a trivialisation of $\cL$ at $x_i \times_S
S'$. Pairs $(\cL, \beta)$ and $(\cL', \beta')$ are equivalent if there
exists an isomorphism $\sigma \colon \cL \isom \cL'$ and some
$u \in \cO^\mul(S'\otimes_R \F)$ such that for every $i$ the diagram
\begin{equation}
  \label{eq:Fstar-equivalence}
  \begin{tikzcd}
    \cO_{S'\otimes_R\F_i} \arrow[r, "{\beta_i}"] \arrow[d, "{\times u}"]
    & (x_i\times \id_{S'})^*\cL \arrow[d, "{\sigma}"] 
    \\
    \cO_{S'\otimes_R\F_i} \arrow[r, "{\beta_i'}"] & (x_i\times \id_{S'})^*\cL'    
  \end{tikzcd}
\end{equation}
commutes. 
Note that $u$ is uniquely determined by $\sigma$. If $S' \in (Sm/S)$
is actually an $\F$-scheme, then giving a pair $(u, \sigma)$ is the
same as giving an isomorphism $(\cL, \beta) \isom (\cL', \beta')$,
since we can absorb $u$ into $\sigma$, and therefore the restrictions
of $\cF^*$ and $P_\Sigma$ to $(Sm/\F)$ are equal.

\begin{thm}
  \label{thm:Pm_eq_Fstar}
  The restriction of $P_\fm$ to $(Sm/S)_\et$ is the sheafification
  for the \etale\ topology of the presheaf $\cF^*$.
\end{thm}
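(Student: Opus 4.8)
\emph{Plan.} The plan is to exhibit mutually inverse morphisms between the two \etale\ sheaves on $(Sm/S)$. Since $P_\fm$ is an algebraic space (Proposition \ref{prop:Pm-smooth}) it is already an \etale\ sheaf, so I write $\cF^{*\#}$ for the \etale\ sheafification of $\cF^*$ on $(Sm/S)$ and will construct morphisms $\Phi\colon P_\fm \to \cF^{*\#}$ and $\Psi\colon \cF^{*\#}\to P_\fm$, then check $\Phi\circ\Psi=\id$ and $\Psi\circ\Phi=\id$. (Over $\F$ there is nothing to do: as noted above the restrictions of $\cF^*$, $P_\Sigma$ and $P_\fm$ to $(Sm/\F)$ all coincide; it is the integral structure that is at issue.) Two facts are used throughout: first, $\Gamma(\cX\times_S S',\cO)=\cO(S')$ for every $S$-scheme $S'$ (cohomological flatness of $\cX/S$, exactly as in the proof of the preceding lemma), so that $\Aut(\cL)=\cO(S')^\mul$ for a line bundle $\cL$ on $\cX\times_S S'$; second, for $S'\in(Sm/S)$ the N\'eron property gives $\cT_\fm(S')=T_\fm(S'\otimes_R\F)$ and identifies $\cT_\fm^0(S')$ with its image therein — and it is precisely this identification that forces the restriction to essentially smooth test schemes.

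To define $\Phi$, recall from \eqref{eq:Pfm-def} that a section of $P_\fm$ over $S'\in(Sm/S)$ is represented, \etale-locally on $S'$, by a pair $\bigl((\cL,\alpha),t\bigr)$ with $(\cL,\alpha)\in P_\Sigma(S')$ and $t\in\cT_\fm(S')$, two such being identified under the action $a\cdot\bigl((\cL,\alpha),t\bigr)=\bigl((\cL,a\cdot\alpha),a^{-1}t\bigr)$ of $a\in\cT_\fm^0(S')$. Restricting $\alpha$ along the generic point $x_i$ of the $i$-th component $\Spec R_i$ of $\Sigma$ yields trivialisations $\alpha_{\eta,i}\colon \cO_{S'}\otimes_R\F_i\isom (x_i\times\id_{S'})^*\cL$; choosing \etale-locally a lift $\tilde t=(\tilde t_i)$ of $t\in T_\fm(S'\otimes_R\F)$ to $\prod_i(\F_i\otimes_\F\cO(S'\otimes_R\F))^\mul$, set $\beta_i\defeq\tilde t_i\cdot\alpha_{\eta,i}$ and let $\Phi$ send the class of $\bigl((\cL,\alpha),t\bigr)$ to the class of $(\cL,(\beta_i)_i)$ in $\cF^*(S')$. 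The value is independent of the lift $\tilde t$ (it changes by $u\in\Gm(S'\otimes_R\F)$, which is absorbed into the equivalence \eqref{eq:Fstar-equivalence}) and of the representative (replacing $\alpha$ by $a\cdot\alpha$, $a\in\cT_\fm^0(S')$, rescales the $\alpha_{\eta,i}$ by factors cancelling $a^{-1}t$), so the construction glues to a morphism of \etale\ sheaves.

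To define $\Psi$, start from $(\cL,\beta)\in\cF^*(S')$. \Etale-locally on $S'$ the line bundle $g^*\cL$ on the finite $S'$-scheme $\Sigma\times_S S'$ is trivial, so choose a rigidification $\alpha\colon\cO_{\Sigma\times_S S'}\isom g^*\cL$; then $(\cL,\alpha)\in P_\Sigma(S')$ and $t\defeq(\beta_i\circ\alpha_{\eta,i}^{-1})_i\bmod\Gm$ lies in $T_\fm(S'\otimes_R\F)=\cT_\fm(S')$, and one puts $\Psi(\cL,\beta)\defeq\bigl[\,((\cL,\alpha),t)\,\bigr]\in P_\fm(S')$. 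Changing $\alpha$ by a local section of $\R_{\Sigma/S}\Gm$ (well-defined up to $\Gm$) replaces the pair by its translate under the corresponding section of $\cT_\fm^0$, leaving the class unchanged; and if $(\cL,\beta)\sim(\cL',\beta')$ via $(\sigma,u)$ as in \eqref{eq:Fstar-equivalence}, replacing $\alpha$ by $g^*(\sigma)\circ\alpha$ gives the same value, using $\Aut\cL=\cO(S')^\mul$ and the vanishing of $u$ in $T_\fm$. Hence $\Psi$ is a morphism of \etale\ sheaves, and $\Phi$, $\Psi$ are visibly inverse: the rigidification $\alpha$ (resp.\ the lift $\tilde t$) introduced by $\Psi$ (resp.\ $\Phi$) is exactly the one the other map produces, so $\beta_i=\tilde t_i\alpha_{\eta,i}$ and $t=(\beta_i\alpha_{\eta,i}^{-1})\bmod\Gm$ are mutually inverse recipes.

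The step I expect to be the main obstacle is the simultaneous bookkeeping, in the well-definedness of $\Phi$ and $\Psi$, of three quotients: the automorphisms $\cO(S')^\mul=\Aut\cL$ acting on rigidifications inside $P_\Sigma$, the $\cT_\fm^0$-action defining $P_\fm$, and the twist $u\in\Gm(S'\otimes_R\F)$ in the equivalence relation of $\cF^*$. These only match up once one uses the N\'eron property $\cT_\fm(S')=T_\fm(S'\otimes_R\F)$ to convert the special-fibre part of a full rigidification along $\Sigma$ into a genuine $S'$-point of $\cT_\fm$, and checking that this conversion is consistent on \etale\ overlaps — so that the locally defined maps glue — is the crux of the argument.
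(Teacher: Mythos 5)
Your proof is correct and is essentially the paper's own argument in different packaging: the paper presents $P_\fm$ as the \etale-sheaf cokernel of $\psi\colon \R_{\Sigma/S}\Gm\times\cGm\to P_\Sigma\times\R_{\Sigma/S}\cGm$ and maps the middle term to $\cF^*$ by $((\cL,\alpha),v)\mapsto(\cL,\alpha\otimes v)$ — exactly your $\Phi$ with $v$ playing the role of the lift $\tilde t$ — while your $\Psi$ is the paper's verification that this map is Zariski-locally surjective with kernel the image of $\psi$. The one extra step your repackaging requires, namely the \etale-local liftability of $t\in\cT_\fm(S')$ to $\prod_i\Gm(S'_\F\otimes_\F\F_i)=\R_{\Sigma/S}\cGm(S')$ (which the paper sidesteps by never passing to the quotient $\cT_\fm$ in its presentation), is supplied by Proposition \ref{prop:tori-neron}(b) and should be cited there.
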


\begin{proof} We have an exact sequence of fppf sheaves
\[
0 \to \Gm \xrightarrow{\diag} \R_{\Sigma/S} \Gm \times  \cGm 
\xrightarrow{\psi} P_\Sigma \times  \R_{\Sigma/S} \cGm
\]
where the map $\psi$ on $S'$-valued points is given by
\[
\psi \colon (a, b) \mapsto ((\cO_{\cX\times_S S'},a\cdot
\id_{\cO_{\Sigma\times S'}}), a^{-1} b) \in 
P_\Sigma (S') \times  \cGm (\Sigma \times_S S' ). 
\]
By definition, $P_\fm$ is the cokernel of $\psi$ in the category of
fppf sheaves. As the coimage of $\psi$ is a smooth $S$-group scheme,
$P_\fm$ is also the cokernel of $\psi$ in the category of \etale{}
sheaves. Let $S' \in (Sm/S)$ and consider the map
\[
\phi_{S'} \colon P_\Sigma (S' ) \times  \R_{\Sigma/S} \cGm (S' ) = P_\Sigma (S' )
\times  \Gm (\Sigma_\F \times_S S' ) \to \cF^* (S' ) 
\]
given as follows: let $(\cL, \alpha)$ represent an element of
$P_\Sigma (S')$ and $v \in \Gm (\Sigma_F \times_S S' )$. We map the
pair $((\cL, \alpha), v)$ to the equivalence class of $(\cL, \beta)$,
where
$\beta = \alpha \otimes v \colon \cO_{\Sigma\times S' \otimes \F}\isom
(g \times \id_{S' \otimes \F} )^* \cL$. It is easy to see that this is
well-defined and functorial, and that the resulting sequence of
presheaves on $(Sm/S)$
\[
\R_{\Sigma/S}\Gm \times  \cGm \xrightarrow{\psi}
 P_\Sigma \times  \R_{\Sigma/S} \cGm \xrightarrow{\phi}
\cF^*
\]
is exact. Moreover, for any $(\cL, \beta) \in \cF^* (S' )$, there
exists a Zariski cover $S'' \to S'$ such that $(\cL, \beta)|_{S''}$ is
in the image of $\phi_{S''}$. The result follows.
\end{proof}

Let $E_\fm$ denote the closure in $P_\fm$ of the zero section. It is
contained in 
\[
P_\fm' = 
\ker(\deg \colon P_\fm \to P \to \ZZ). 
\]

\begin{thm}\mbox{ }
  \label{thm:main}
\begin{enumerate}[\upshape (a)]
\item
\label{thm:main1}
The map $r'$ \eqref{eq:PSigma} induces an isomorphism $E_\fm \isom E$.
\item
\label{thm:main2}
The quotient $P_\fm' /E_\fm$ is represented by the N\'eron model $\cJ_\fm$ of $J_\fm$.
\item
\label{thm:main3}
There is an exact sequence of N\'eron models
\[
0 \to \cT_\fm \to \cJ_\fm \to \cJ \to 0.
\]
\item
\label{thm:main4}
Assume that S is strictly Henselian. Then there is a canonical
isomorphism 
\[
P_{\fm,s} /P_{\fm,s}^0 \isom
\ZZ^C \oplus \ZZ^I /e\ZZ
\]
where $e = (e_i) \colon \ZZ \to \ZZ^I$ is as in Proposition
\textup{\ref{prop:tori-neron}}.
\end{enumerate}
\end{thm}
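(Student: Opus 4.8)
The plan is to deduce parts (b), (c) and (d) from part (a), the identification $E_\fm\isom E$. After reducing to the case where $R$ is strictly Henselian --- all the objects in play are compatible with base change to $\Rsh$, and N\'eron models commute with strict henselisation --- part (a) becomes a bijection-on-$R$-points computation, parts (b) and (c) follow formally from it together with Proposition~\ref{prop:converse-to-neron-exact}, and part (d) is an independent $\pi_0$-computation using Proposition~\ref{prop:tori-neron}(b).

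\emph{Part (a).} Since $r'$ sends the zero section to the zero section, $r'(E_\fm)\subseteq E$. Both $E$ and $E_\fm$ are \'etale over $S$ with trivial generic fibre --- for $E$ by Raynaud's theorem, and for $E_\fm$ by the same argument applied to the smooth group algebraic space $P_\fm$ --- so it suffices to show that $r'$ is bijective on $R$-points. For surjectivity, take a line bundle $\cL$ on $\cX$ trivial on $X$, representing an arbitrary class of $E(R)$; choose \emph{any} trivialisation $\alpha$ of $\cL|_\Sigma$, giving a point $(\cL,\alpha)\in P_\Sigma(R)\subseteq P_\fm(R)$. Its image in $(P_\fm)_\F$ lies over $0\in P_\F$ and so defines a class $c\in T_\fm(\F)$; because $P_\fm$ is the pushout along the \emph{full} N\'eron model $\cT_\fm$ (and not merely $\cT_\fm^0$), we have $c\in\cT_\fm(\F)=\cT_\fm(R)$, and $c^{-1}\cdot(\cL,\alpha)\in P_\fm(R)$ has trivial generic fibre, so its special fibre lies in $E_\fm$ and maps to the class of $\cL$. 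For injectivity, $\ker(r')=\cT_\fm$, so applying the snake lemma to the sequence of $R$-points of $0\to\cT_\fm\to r'^{-1}(E)\to E\to 0$ (exact on $R$-points since $H^1(R,\cT_\fm)=0$) and comparing with the generic fibre, using $\cT_\fm(R)=\cT_\fm(\F)$ and $E(\F)=0$, shows $E_\fm(R)\isom E(R)$.

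\emph{Parts (b) and (c).} As $r'$ preserves total degree it restricts to a surjection $P_\fm'\to P'$ with kernel $\cT_\fm$. Since $E_\fm\isom E$ by (a), the induced map $P_\fm'/E_\fm\to P'/E=\cJ$ is surjective and, as $E_\fm\cap\cT_\fm=0$, has kernel $\cT_\fm$. Now $P_\fm'/E_\fm$ is the maximal separated quotient of $P_\fm'$ (quotient by the closure of the unit section), hence separated, and it is a scheme by the argument of \cite[(4.1.1)]{Ray70}; its generic fibre is $(P_\fm')_\F\cong J_\fm$. Thus we have an exact sequence of smooth $S$-group schemes $0\to\cT_\fm\to P_\fm'/E_\fm\to\cJ\to 0$, and since $\cT_\fm$ and $\cJ$ are the N\'eron models of $T_\fm$ and $J$, Proposition~\ref{prop:converse-to-neron-exact} identifies $P_\fm'/E_\fm$ with the N\'eron model $\cJ_\fm$ of $J_\fm$. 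This proves (b), and the displayed sequence is (c).

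\emph{Part (d), and the main obstacle.} By the top row of \eqref{eq:PSigma} the morphism $P_\Sigma\to P$ has connected kernel $\cT_\fm^0$, hence induces an isomorphism $\pi_0(P_{\Sigma,s})\isom\pi_0(P_s)=P_s/P_s^0\cong\ZZ^C$. Writing $P_\fm=(P_\Sigma\times\cT_\fm)/\cT_\fm^0$ as a quotient of smooth $S$-group schemes by the connected group $\cT_\fm^0$, passing to special fibres and applying $\pi_0$ (which is right exact and commutes with products) gives
\[
P_{\fm,s}/P_{\fm,s}^0=\pi_0(P_{\Sigma,s})\times\pi_0(\cT_{\fm,s})=\ZZ^C\oplus\comp{T_\fm},
\]
and $\comp{T_\fm}=\coker(e\colon\ZZ\to\ZZ^I)=\ZZ^I/e\ZZ$ by Proposition~\ref{prop:tori-neron}(b). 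I expect the main difficulty to be part (a): the closure $E_\fm$ of the unit section in the non-separated $P_\fm$ is a delicate object, and the surjectivity argument in particular requires keeping straight the difference between $P_\Sigma$, where trivialisations are integral along $\Sigma$, and the presheaf $\cF^*$ of Theorem~\ref{thm:Pm_eq_Fstar}, where they live only over $\Sigma_\F$; one must also justify that $E_\fm$ is \'etale over $S$. Once (a) is established, parts (b)--(d) are essentially formal.
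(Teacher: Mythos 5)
Your proof is correct, and for parts (a)--(c) it follows essentially the paper's route: (a) rests on the same key observation that, because $P_\fm$ is the pushout along the \emph{full} N\'eron model $\cT_\fm$ rather than $\cT_\fm^0$, an arbitrary integral rigidification of a line bundle trivial on $X$ can be corrected by an element of $\cT_\fm(R)=T_\fm(\F)$ to produce a section of $E_\fm$ over each class in $E(R)$ (the paper phrases this as saying $E_\fm$ is generated by the pairs $(\cO_\cX(Y_j),\beta_{\mathrm{triv}})$, where $\beta_{\mathrm{triv}}$ is the rational trivialisation, which is the same correction in disguise); and (b),(c) are verbatim the paper's application of Proposition~\ref{prop:converse-to-neron-exact}. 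Part (d) is where you genuinely diverge. The paper applies $\pi_0$ to the bottom row of \eqref{eq:PSigma}, $0\to\cT_\fm\to P_\fm\to P\to 0$, whose kernel $\cT_\fm$ is \emph{dis}connected; right-exactness of $\pi_0$ then only gives exactness of $\comp{T_\fm}\to\pi_0(P_{\fm,s})\to\ZZ^C\to 0$, and injectivity of the first map (plus the splitting) is obtained by exhibiting the explicit retraction $(\cL,\beta)\mapsto(\ord_\cL\beta_i(1))_i$ onto $s_*(\ZZ^I/e\ZZ)$. You instead apply $\pi_0$ to the defining presentation $P_\fm=(P_\Sigma\times\cT_\fm)/\cT_\fm^0$, whose kernel \emph{is} connected, so right-exactness alone gives $\pi_0(P_{\fm,s})\cong\pi_0(P_{\Sigma,s})\oplus\comp{T_\fm}$ in one step; combined with $\pi_0(P_{\Sigma,s})\cong\pi_0(P_s)=\ZZ^C$ (again a connected-kernel quotient) this is a cleaner derivation of the statement as written. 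What the paper's longer version buys is the explicit description of the $\ZZ^I/e\ZZ$-coordinate via $\ord_\cL\beta_i(1)$, which is precisely what is invoked in the proof of Theorem~\ref{thm:phi_m} to identify the map $h$. One can check the two splittings agree (an integral trivialisation $\alpha_i$ has $\ord_\cL\alpha_i(1)=0$, so your product decomposition computes the same $\ZZ^I/e\ZZ$-coordinate), but if you take your route you would still need to make that identification explicit before using (d) downstream.
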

The analogue of \eqref{thm:main1} need not hold for $P_\Sigma$ --- see 
Example \ref{ex:conic-picard} after the proof.

\begin{proof} (\ref{thm:main1}) By \cite[(3.3.5)]{Ray70} and Proposition
  \ref{prop:Pm-smooth}, $E_\fm$ is an \etale\ algebraic space in groups
  over $S$. So we may compute it by restriction to $(Sm/S)_\et$, using
  the description of Theorem \ref{thm:Pm_eq_Fstar}, and we may also
  assume that $S$ is strictly Henselian. In this case, from
  \S\ref{sec:raynaud} we have that $E(S)$ is generated by the classes
  of the line bundles $\cO_\cX (Y_j )$. Let
  $\beta_{\mathrm{triv}} = (\beta_{\mathrm{triv},i} )$ be the trivial
  rigidification of the generic fibre $\cO_\cX (Y_j )_\F =
  \cO_X$ at $(x_i)$. Then $E_\fm$ is generated by the equivalence classes of
  pairs $(\cO_\cX (Y_j ),\beta_{\mathrm{triv}})$, and therefore
  $E_\fm \simeq E$.

  (\ref{thm:main2}),(\ref{thm:main3}) We now have an exact sequence
  \[
    0 \to \cT_\fm \to P_\fm' /E_\fm \to P' /E \to 0
  \]
  of smooth separated $S$-algebraic spaces in groups, which are
  therefore separated $S$-group schemes \cite[(3.3.1)]{Ray70}, whose
  generic fibre is the sequence $0 \to T_\fm \to J_\fm \to J \to
  0$. As $\cT_\fm$ and $P'/E$ are the N\'eron models of $T$ and $J$,
  the result follows from Proposition \ref{prop:converse-to-neron-exact}.

  (\ref{thm:main4}) From \S\ref{sec:stuff} above,
  $\cT_{\fm,s} /\cT_{\fm,s}^0 \simeq \coker(e \colon \ZZ \to \ZZ^I
  )$. We then have a commutative diagram of \etale\ sheaves on $(Sm/S)$
  \[
    \begin{tikzcd}
      0 \arrow[r] & \cT_\fm \arrow[r] \arrow[d]
      & P_\fm \arrow[r] \arrow[d] & P \arrow[r] \arrow[d] & 0
      \\
      & s_* (\ZZ^I /e\ZZ) \arrow[r] & P_\fm /P_\fm^0 \arrow[r] & P/P^0 \arrow[r] & 0
    \end{tikzcd}
  \]
  whose rows are exact (since $\pi_0$ is right exact). For $S' /S$
  smooth, and $(\cL, \beta = (\beta_i ))$ representing an element of
  $\cF^* (S' )$, $\beta_i (1)$ is a rational section of
  $(g_i \times \id_{S'} )^* \cL$ so has a well-defined order along the
  special fibre $\ord_\cL \beta_i (1) \in \Gamma(S' , s_* \ZZ)$.  If
  $(\cL', \beta')$ is equivalent to $(\cL, \beta)$ then
  $(\ord_\cL \beta_i' (1) - \ord_{\cL'} \beta_i (1))_i \in \Gamma(S' , s_*
  (e\ZZ))$, which gives a splitting of the bottom row in the diagram
  (which is therefore also exact on the left).
\end{proof}

\begin{exam}
\label{ex:conic-picard}
Let’s work out the simplest nontrivial example: assume that
$\mathrm{char}(\F ) \ne 2$, and let $\cX$ be the closed subscheme of
$\mathbb{P}^2_R$ given by the equation $T_1 T_2 = \varpi T_0^2$. Then
$X = \cX_\F$ is a smooth conic, split over $\F$, and $\cX_s$ is the
line pair $T_1 T_2 = 0$. Hypotheses  (H1--3) of Section
\ref{sec:raynaud}  are all
satisfied. Let $x_0$, $x_1 \in X(\F ) = \cX(S)$ be distinct
points. Let $\cX_s = Y \cup Y' $, where the components are labelled in
such a way that $x_0$ meets $Y'$. We consider the generalized Jacobian
$J_\fm$ with $\fm = (x_0 ) + (x_1 )$.  The relative Picard space
$P = \Pic_{\cX/S}$ is a scheme, and is the union of its sections over
$S$. We have $P (\F ) = \ZZ$, generated by the class of
$\cO_\cX (x_0 )$, and $P (R) = P_s (k) = \ZZ^2$, generated by the
classes of $\cO_\cX (Y ) \simeq \cO_\cX (-Y' )$ and $\cO_\cX (x_0
)$. The restriction map $P (S) \to P (\F )$ is the second projection
$\ZZ^2 \to \ZZ$, and equals the degree map. Therefore $P' = E$ is the
``skyscraper scheme" $s_* \ZZ$, obtained by gluing copies of $S$
indexed by $\ZZ$ along their generic points, and $P' (S)$ is generated
by the class of $\cO_\cX (Y )$.

There is an isomorphism $\Gm \isom J_\fm = P_\Sigma' \otimes \F $,
which on $\F$-points takes $a \in \F^\mul$ to the equivalence class of
the pair $(\cO_\cX , \alpha = (\alpha_0 , \alpha_1 ))$, where
$\alpha_i \colon \F \to x_i^* \cO_\cX = \F$ is the identity for
$i = 0$ and multiplication by $a$ for $i = 1$. As $x_0$ doesn’t meet
$Y$ we also have $x_0^*\cO_\cX (Y ) = \cO_S $. We now have two cases:
\begin{itemize}
\item If $x_1$ meets $Y'$, then $x_1^*\cO_\cX (Y ) = \cO_S$ as well.
  So there is a canonical rigidification $(\alpha_i )$ of
  $\cO_\cX (Y )$ along $\Sigma$, for which each $\alpha_i$ is the
  identity map on $\cO_S$, and therefore
  $P_\Sigma \simeq \Gm \times P$ splits (and is not
  separated). Likewise, $P'_\Sigma\simeq \Gm \times s_*\ZZ$.
  The pushout $P_\fm'$ is simply the product $\cGm \times s_*\ZZ$.

\item If $x_1$ meets $Y$, then
  $x_1^*\cO_\cX (Y ) = \cO_S (s) = \varpi^{-1}\cO_S $. So there is a
  bijection $\ZZ \times R^\mul \isom P_\Sigma' (S)$ which takes
  $(n, a)$ to the line bundle $\cO_\cX (nY )$ with rigidification
  $\alpha_0 = \id$, $\alpha_1 (1) = \varpi^{-n}$. Its composition with
  restriction to the generic fibre is the bijection
  $\ZZ \times R^\mul \to P_\Sigma (\F ) = \F^\mul$ given by
  $(n, a) \mapsto \varpi^{-1} a$. So $P_\Sigma'$ is separated, and is
  isomorphic to the N\'eron model $\cGm$.
  The pushout $P_\fm'$ is then
  the tautological splitting of the extension
  $\Gm \to \cGm \to s_* \ZZ$ after pushing out through $\Gm \to \cGm$,
  so is isomorphic to $\cGm \times s_*\ZZ$ in this case as well.
\end{itemize}
\end{exam}

We return to the general case. From Section \ref{sec:raynaud}, $P_s^0 \cap E_s$ is finite
constant and cyclic 
of order $d$, generated by the class of the line bundle $\cL'$. Therefore
$P_{\fm,s}^0\cap E_{\fm,s}$
is finite constant and cyclic of order dividing $d$. Applying the
results of Section \ref{sec:singular2}, 
we obtain:
\begin{cor}
  \label{cor:charGenJac}
  Assume that $d = 1$. Then:
  \begin{enumerate}[\upshape (a)]
  \item
    \label{cor:charGenJac1}
    $\cJ_{\fm,s}^0  = \Pic^0_{(\cX_s ,\Sigma_s )/k}$.
  \item
    \label{cor:charGenJac2}
    If $k$ is perfect, there is a canonical isomorphism of
    $\Gal(\kbar/k)$-modules\mpa
    \[
      \Hom(\cJ_{\fm,s}^{0,\lin}\otimes_k \kbar, \Gm ) = H_1 (\Gtilde_{\cX_{\bar s}
        ,\Sigma_{\bar s}} , \ZZ)
    \]
    where the graph $\Gtilde_{\cX_{\bar s} ,\Sigma_{\bar s}}$ is as in
    Section \textup{\ref{sec:singular2}}.
\end{enumerate}
\end{cor}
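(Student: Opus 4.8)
The plan is to derive both parts from Theorem~\ref{thm:main} and the structure theory of \S\ref{sec:singular2}, exploiting the fact that for $d=1$ the passage to the N\'eron model does not alter the identity component of the special fibre.

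For part~(a) I would argue as follows. By Theorem~\ref{thm:main}(\ref{thm:main2}), $\cJ_\fm=P'_\fm/E_\fm$ with $E_\fm$ \'etale over $S$; since $E_\fm$ is flat, the quotient commutes with base change to $s$, so $\cJ_{\fm,s}=P'_{\fm,s}/E_{\fm,s}$, and the quotient map is flat and locally of finite presentation, hence open. Consequently $\cJ^0_{\fm,s}=(P'_{\fm,s})^0/\bigl((P'_{\fm,s})^0\cap E_{\fm,s}\bigr)$. Since $P^0_{\fm,s}$ is connected and contains the identity, it lies in $\ker(\deg)=P'_{\fm,s}$ and hence equals $(P'_{\fm,s})^0$; and by the remark immediately preceding the corollary, $P^0_{\fm,s}\cap E_{\fm,s}$ is cyclic of order dividing $d=1$, hence trivial. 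Thus $\cJ^0_{\fm,s}=P^0_{\fm,s}$. Next I would use the description of $P_\fm$ in Proposition~\ref{prop:Pm-smooth} as a union of copies of $P_\Sigma$, indexed by $\comp{T}$ and glued along their \emph{generic} fibres: the gluing locus avoids the special fibre, so $P_{\fm,s}$ is a disjoint union of copies of $P_{\Sigma,s}$, and in particular $P^0_{\fm,s}=P^0_{\Sigma,s}$. Finally, as $\cX/S$ is cohomologically flat, Raynaud's rigidified Picard functor commutes with base change to $s$, so $P_{\Sigma,s}$ is the rigidified Picard functor of $(\cX_s,\Sigma_s)/k$; since $\cX_s$ is connected with $\Gamma(\cX_s,\cO_{\cX_s})=k$ and $\Sigma_s\ne\emptyset$, Lemma~\ref{lem:rigid} applies and, by construction in \S\ref{sec:singular2}, $P_{\Sigma,s}=\Pic_{(\cX_s,\Sigma_s)/k}$. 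Its identity component is $\Pic^0_{(\cX_s,\Sigma_s)/k}$, because the kernel $H_{\Sigma_s}$ of the map in \eqref{eq:gen_pic_filtration} is connected. Chaining these identifications gives~(a).

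For part~(b) I would base change the isomorphism of~(a) along $k\to\kbar$. Since $\pi_0$, the formation of the maximal connected linear subgroup (valid as $k$ is perfect), and the Picard functor of $\cX_s$ all commute with this extension (the last because $\Gamma(\cX_s,\cO_{\cX_s})=k$), one obtains a $\Gal(\kbar/k)$-equivariant isomorphism $\cJ^{0,\lin}_{\fm,s}\otimes_k\kbar\isom\Pic^{0,\lin}_{(\cX_{\bar s},\Sigma_{\bar s})/\kbar}$. Applying $\Hom(-,\Gm)$ and then Proposition~\ref{prop:charGenJac}(\ref{prop:charGenJac1}) over the algebraically closed field $\kbar$ identifies the right-hand side with $H_1(\Gtilde_{\cX_{\bar s},\Sigma_{\bar s}},\ZZ)$; every identification used is canonical, hence Galois equivariant, giving~(b).

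The step I expect to be the main obstacle is the base-change statement $P_{\Sigma,s}=\Pic_{(\cX_s,\Sigma_s)/k}$: one must know that Raynaud's rigidified relative Picard functor commutes with base change to the closed point, which genuinely relies on the cohomological flatness of $\cX/S$ forced by hypothesis~(H3)---the unrigidified Picard functor would not suffice. A secondary, essentially bookkeeping point is the passage from ``the identity component of a quotient'' to ``the quotient of identity components'', which is legitimate precisely because $E_\fm$ is \'etale and, once $d=1$, meets $P^0_{\fm,s}$ trivially.
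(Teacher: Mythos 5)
Your proposal is correct and follows exactly the route the paper intends: the paper gives no written proof, deriving the corollary from the remark immediately preceding it (that $P_{\fm,s}^0\cap E_{\fm,s}$ has order dividing $d$, hence is trivial for $d=1$) together with the identification of $P_{\fm,s}^0$ with $\Pic^0_{(\cX_s,\Sigma_s)/k}$ and Proposition \ref{prop:charGenJac}. Your writeup is a faithful expansion of that argument, correctly locating the two points (base change of the rigidified Picard functor, and the triviality of the intersection with $E_{\fm,s}$) on which it rests.
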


Finally we compute the component group $\comp{J_\fm}$.

\begin{thm}
  \label{thm:phi_m}
  Suppose that $R$ is strictly Henselian. Then $\comp{J_\fm}$ is
  canonically isomorphic to the homology of the complex
  \begin{equation}
    \label{eq:raynaud2}
    \ZZ[C] \xrightarrow{(a,h)} \ZZ^C \oplus \ZZ^I /e\ZZ
    \xrightarrow{b\oplus0} \ZZ 
  \end{equation}
  where $a$ and $b$ are as in \eqref{eq:raynaud-defs}, and $h\colon \ZZ[C]
  \to \ZZ^I/e\ZZ$ is induced by the map
  \begin{align*} 
    C \times I &\to\ZZ \\
    (j, i) &\mapsto h_{ij} := \ord_{\cO_\cX(Y_j)} \beta_{\mathrm{triv},i} (1).
  \end{align*}
\end{thm}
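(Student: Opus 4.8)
The plan is to repeat Raynaud's computation of the component group (recalled in \S\ref{sec:raynaud}), but with the rigidified Picard space $P_\fm$ in place of $P$. Since $R$ is strictly Henselian, $E_\fm$ is a constant \'etale $S$-group scheme, and by Theorem~\ref{thm:main}(\ref{thm:main2}) we have $\cJ_\fm = P_\fm'/E_\fm$; moreover $P_\fm' = \ker(\deg\colon P_\fm\to\ZZ)$ is open and closed in $P_\fm$, so its identity component equals $P_\fm^0$. Passing to special fibres gives $\cJ_{\fm,s} = P_{\fm,s}'/E_{\fm,s}$ with identity component the image of $P_{\fm,s}^0$, and since $E_{\fm,s}$ is finite the third isomorphism theorem yields a canonical identification
\[
  \comp{J_\fm} \;=\; \cJ_{\fm,s}/\cJ_{\fm,s}^0 \;=\; \coker\bigl(E_{\fm,s} \longrightarrow P_{\fm,s}'/P_{\fm,s}^0\bigr).
\]
It then remains to compute the target group and this map explicitly.

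For the target I would use Theorem~\ref{thm:main}(\ref{thm:main4}), which identifies $P_{\fm,s}/P_{\fm,s}^0$ with $\ZZ^C\oplus\ZZ^I/e\ZZ$. Under this identification the composite $\deg\colon P_\fm\to P\xrightarrow{\deg}\ZZ$ induces on $\pi_0$ the homomorphism $b\oplus 0$: it is $b$ on the $\ZZ^C$-summand by Raynaud's description in \S\ref{sec:raynaud}, and it kills the $\ZZ^I/e\ZZ$-summand because that summand comes from $\cT_\fm\subset P_\fm'$, which has degree zero. Hence $P_{\fm,s}'/P_{\fm,s}^0 = \ker(b\oplus 0)$, the middle term of \eqref{eq:raynaud2}.

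For the map, Theorem~\ref{thm:main}(\ref{thm:main1}) shows that $E_{\fm,s}$ is the image of $E_s$ under the isomorphism $E_\fm\isom E$, hence is generated by the classes of the pairs $(\cO_\cX(Y_j),\beta_{\mathrm{triv}})$ for $j\in C$; equivalently, $E_{\fm,s}$ is a quotient of $\ZZ[C]$ with $(j)$ mapping to this class. Its $\ZZ^C$-component is the multidegree $\bigl(\tfrac1{\delta_\ell}\deg\cO_\cX(Y_j)|_{Y_\ell}\bigr)_\ell = a(j)$, exactly as in \eqref{eq:raynaud}. Its $\ZZ^I/e\ZZ$-component is read off from the splitting exhibited in the proof of Theorem~\ref{thm:main}(\ref{thm:main4}), under which the composite $P_{\fm,s}\to\pi_0(P_{\fm,s})\to\ZZ^I/e\ZZ$ sends the class of $(\cL,\beta)$ to $(\ord_\cL\beta_i(1))_i$; evaluating on $(\cO_\cX(Y_j),\beta_{\mathrm{triv}})$ gives precisely $(h_{ij})_i$ by the definition of $h$. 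Thus $E_{\fm,s}\to P_{\fm,s}'/P_{\fm,s}^0$ is induced by $(a,h)\colon\ZZ[C]\to\ZZ^C\oplus\ZZ^I/e\ZZ$, and therefore
\[
  \comp{J_\fm} \;=\; \ker(b\oplus 0)/\im(a,h),
\]
the homology of \eqref{eq:raynaud2}. Along the way one checks that $(b\oplus 0)\circ(a,h) = b\circ a = 0$, so that \eqref{eq:raynaud2} is indeed a complex (this is established in \S\ref{sec:raynaud}, cf.\ \eqref{eq:raynaud}), and that the relation $\sum_j\delta_j(j)$ cutting out $E_s$ inside $\ZZ[C]$ maps into $e\ZZ$, as it must since $E_{\fm,s}$ is a group.

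The step I expect to require the most care is the identification of the $\ZZ^I/e\ZZ$-component of $E_{\fm,s}\to\pi_0(P_{\fm,s})$ with $h$: one must match the splitting of the bottom row in the proof of Theorem~\ref{thm:main}(\ref{thm:main4}) --- defined via the order along $\cX_s$ of the rational section $\beta_i(1)$ of $(g_i\times\id)^*\cL$ --- against the divisor classes $\cO_\cX(Y_j)$ carrying the trivial rigidification of their generic fibres, so that the integers $h_{ij}$ in the statement genuinely record the contribution of $E_{\fm,s}$ to the torus part of $\pi_0$. Once this compatibility is pinned down, the two computations above assemble to give the result, and the remainder is bookkeeping.
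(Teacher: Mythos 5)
Your proof is correct and is essentially the paper's own argument: reduce to the complex $E_\fm(k)\to\pi_0(P_{\fm,s})\xrightarrow{\deg}\ZZ$, rewrite the middle term and the degree map via Theorem \ref{thm:main}(\ref{thm:main4}), and identify the first map as $(a,h)$ by evaluating the generators $(\cO_\cX(Y_j),\beta_{\mathrm{triv}})$ of $E_{\fm,s}$ (supplied by the proof of Theorem \ref{thm:main}(\ref{thm:main1})) against the splitting constructed in the proof of Theorem \ref{thm:main}(\ref{thm:main4}). The only inaccuracies are harmless: $E_{\fm,s}$ is \'etale and discrete but not finite in general (its group of $k$-points is free of rank $\#C-1$), and discreteness rather than finiteness is what justifies the identity-component claim; also the relation cutting out $E_s$ in $\ZZ[C]$ is $\sum_j d_j(j)$ rather than $\sum_j\delta_j(j)$.
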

\noindent(Equivalently, $h_{ij}$ is the degree of the divisor $g_i^* Y_j$ on
$\Spec R_i$.)
\begin{proof} By Theorem \ref{thm:main}, $\comp{J_\fm}$ is the group
  of connected components of the quotient $P_{\fm,s}'/E_{\fm,s}$,
  hence is the homology of the complex
  $E_\fm (k) \to \pi_0 (P_{\fm,s}) \xrightarrow{\deg}\ZZ$.  By 
Theorem \ref{thm:main}\eqref{thm:main4}, we may rewrite this complex as
  \eqref{eq:raynaud2}. What remains is to identify the map $h$. By the
  proof of \ref{thm:main}(\ref{thm:main1}), $E_\fm (k)$ is generated by the
  equivalence class of pairs
  $(\cO_\cX (Y_j ), \beta_{\mathrm{triv}} )$, and the proof of
  Theorem \ref{thm:main}(\ref{thm:main4}) then gives the desired 
formula for $h$.
\end{proof}

For general $S$ we have
$\Sigma \times_S \Ssh = \coprod_{\itilde \in \Itilde} S_{\itilde}$,
where $S_{\itilde}$ is the spectrum of a DVR finite over $\Rsh$. Let
$\Ctilde$ be the set of irreducible components of
$\cX\otimes\ks$. Then $\Gal(\F^{\mathrm{sep}} /\F )$ acts on $\Itilde$
and $\Ctilde$ through its quotient $\Gal(\ks/k)$, and the above gives
a $\Gal(\ks /k)$-equivariant isomorphism between $\comp{J_\fm}$ and
the homology of the complex
\begin{equation}
  \label{eq:raynaud2-tilde}
  \ZZ[\Ctilde] \xrightarrow{(a,h)} \ZZ^{\Ctilde} \oplus \ZZ^{\Itilde} /e\ZZ
  \xrightarrow{b\oplus0} \ZZ 
\end{equation}
attached to $\cX \times_S \Ssh$.

In the semistable case we can describe both the character and component groups
in terms of the reduced extended graph.

\begin{cor}
  \label{cor:semistable}
  Suppose that $\cX\otimes \Rsh$ is semistable and $I=I^\reg$. Then
\begin{enumerate}[\upshape (a)]
    \label{cor:semistable1}
\item If $k$ is perfect, there is a canonical isomorphism of
$\Gal(\kbar/k)$-modules
\[
\Hom(\cJ_{\fm,s}^{0,\lin}\otimes_k \kbar, \Gm ) = H_1 (\Gamma_{\cX_{\bar s}
  ,\Sigma_{\bar s}} , \ZZ)
\]
where the reduced extended graph $\Gamma_{\cX_{\bar s} ,\Sigma_{\bar s}}$ is as in
Section \textup{\ref{sec:singular2}}.
\item  \label{cor:semistable2} Assume that $R$ is strictly Henselian. There is a canonical isomorphism
\[
\comp{J_\fm} = \coker \left((\laplace, \theta^*) \colon \ZZ[C] \to
  \ZZ[C]\zero \oplus \ZZ^I
\right)
\] 
where $\laplace$ is the Laplacian of the reduced graph
$\Gamma_{\cX_s}$, and $\theta \colon I \to C$ is the map from
Section \textup{\ref{sec:singular2}}. 
\end{enumerate}
\end{cor}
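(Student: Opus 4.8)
The plan is to obtain both parts as specialisations of Corollary~\ref{cor:charGenJac} and Theorem~\ref{thm:phi_m}, reducing the problem to translating the combinatorial data to the semistable, $\F$-rational situation. I would begin with the parenthetical claim that $\{x_i\}\subset X(\F)$ forces $I=I^\reg$. Each reduction $\bar x_i$ is a $k$-rational closed point of $\cX_s$ (the structure map gives a $k$-embedding $\kappa(\bar x_i)\hookrightarrow k$). If $\bar x_i$ were a node, then since $\cX$ is regular (hypothesis~(H2)) and the node has distinct tangents over the separably closed field $k$, the completed local ring of $\cX$ there is $R[[x,y]]/(xy-\varpi)$ (the case $xy=\varpi^n$ with $n\ge 2$ being excluded by regularity); the section $\Spec R\to\cX$ would then be given by $x\mapsto a$, $y\mapsto b$ with $a,b\in\fm_R$ and $ab=\varpi$, which is impossible since $ab\in\fm_R^2$. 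Hence every $\bar x_i$ is a smooth point of $\cX_s$, lying on a unique component $Y_{\theta(i)}$ in the notation of \S\ref{sec:singular2}, so $\Sigma_{\bar s}=\Sigma_{\bar s}^\reg$. I would also record that semistability makes $\cX_s$ geometrically reduced, so $d_j=\delta_j=1$ for all $j$; thus $d=1$, (H3) is automatic, and the results of \S\ref{sec:raynaud}--\S\ref{sec:hard-bit} all apply.

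For part~(a), Corollary~\ref{cor:charGenJac}(\ref{cor:charGenJac2}) already gives $\Hom(\cJ_{\fm,s}\otimes_k\kbar,\Gm)\cong H_1(\Gtilde_{\cX_{\bar s},\Sigma_{\bar s}},\ZZ)$ as a $\Gal(\kbar/k)$-module. Because $\cX_{\bar s}$ has only double points with distinct branches and $\Sigma_{\bar s}=\Sigma_{\bar s}^\reg$, the reduced graph $\Gamma_{\cX_{\bar s},\Sigma_{\bar s}}$ of \S\ref{sec:singular2} is defined and its geometric realisation is canonically homeomorphic to that of $\Gtilde_{\cX_{\bar s},\Sigma_{\bar s}}$, hence gives an isomorphism on $H_1$ exactly as in Proposition~\ref{prop:charGenJac}(\ref{prop:charGenJac2}); Galois equivariance is automatic by transport of structure. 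That is all of~(a).

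For part~(b), I would start from Theorem~\ref{thm:phi_m}, in fact from its $\Gal(\ks/k)$-equivariant refinement \eqref{eq:raynaud2-tilde}, which presents $\comp{J_\fm}$ as the homology at the middle of $\ZZ[C]\xrightarrow{(a,h)}\ZZ^C\oplus\ZZ^I/e\ZZ\xrightarrow{b\oplus0}\ZZ$, and then rewrite each ingredient. Since each $\F_i=\F$, we have $e_i=1$, so $e\ZZ$ is the diagonal and $\ZZ^I/e\ZZ=\ZZ^I/\ZZ$. Since $\delta_j=1$, the map $b$ is the augmentation $\ZZ^C\to\ZZ$, so $\ker(b\oplus0)=\ZZ[C]\zero\oplus\ZZ^I/\ZZ$ and $a$ takes values in $\ZZ[C]\zero$; and, as recalled in \S\ref{sec:raynaud}, for semistable $\cX$ the map $a$ of \eqref{eq:raynaud-defs} is, under the identification $\ZZ[C]=\ZZ^C$, the $0$-Laplacian $\laplace$ of $\Gamma_{\cX_s}$. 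It remains to identify $h$: by the remark after Theorem~\ref{thm:phi_m}, $h_{ij}$ is the degree of the divisor $g_i^*Y_j$ on $\Spec R_i=\Spec R$; as $\bar x_i$ is a smooth point of $\cX_s$ meeting only $Y_{\theta(i)}$, and $\cX$ is regular so $g_i$ meets $Y_{\theta(i)}$ transversally at a regular point, one gets $g_i^*Y_j=\delta_{j,\theta(i)}\,(s)$, whence $h_{ij}=\delta_{j,\theta(i)}$ and $h$ is the transpose $\theta^*\colon\ZZ[C]\to\ZZ^I$ followed by reduction modulo $\ZZ$. Substituting everything, the middle homology becomes $\coker\bigl((\laplace,\theta^*)\colon\ZZ[C]\to\ZZ[C]\zero\oplus\ZZ^I/\ZZ\bigr)$, with the Galois action carried over from \eqref{eq:raynaud2-tilde}.

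The only step that is not pure bookkeeping is the first one --- verifying that a regular model with reduced nodal special fibre admits no section through a node, and then that such a section meets its component transversally so that $g_i^*Y_j$ is as claimed --- together with a little care over sign conventions when identifying $a$ with $\laplace$. Once those points are in hand, the rest is a direct assembly of Corollary~\ref{cor:charGenJac}, Theorem~\ref{thm:phi_m}, and the intersection-number/Laplacian dictionary of \S\ref{sec:raynaud}.
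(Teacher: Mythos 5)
Your proposal is correct and follows exactly the route the paper intends: the corollary is stated with an inline \qed because it is meant to be the direct specialisation of Corollary \ref{cor:charGenJac} and Theorem \ref{thm:phi_m} that you carry out. Your added verifications (a regular model admits no section through a node, hence $I=I^\reg$ and $h_{ij}=\delta_{j,\theta(i)}$; $e=(1,\dots,1)$ and $b$ is the augmentation since $\delta_j=1$; the sign ambiguity in identifying $a$ with $\laplace$ does not affect the cokernel) are the right bookkeeping and are all sound.
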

Note that $\theta$  depends only on the labelled graph
$(\Gamma_{\cX_s,\Sigma_s}, \vzero)$. The hypothesis $I=I^\reg$ is
satisfied if for example $\{x_i\}\subset X(\Fsh)$.

\begin{proof}
  (a) follows immediately from Corollary \ref{cor:charGenJac}(\ref{cor:charGenJac2}) and the
  fact that the geometric realisations of $\Gamma_{\cX_s,\Sigma_s}$
  and $\Gtilde_{\cX_s,\Sigma_s}$ are homeomorphic. For (b), it is
  enough to observe that $(\laplace,\theta^*)$ maps $1\in \ZZ[C]$ to
  $(0,1)\in \ZZ[C]_0\oplus \ZZ^I$, and so the result follows from
  Theorem \ref{thm:phi_m}.
\end{proof}
\subsection{Description via N\'eron models of 1-motives \cite{Suz19}}
\label{sec:suzuki}

An alternative approach to the determination of the component group
$\comp{J_\fm}$ is via duality and the theory of N\'eron models of
$1$-motives developed in \cite{Suz19}. We recall some of the notions and
results of that paper. Recall that a $1$-motive over $\F$ is a two-term complex of
group schemes over $\F$
\[
  M = [\, L \xrightarrow{f} G \, ]
\]
where $L$ is \etale, free and finitely generated
(i.e.~$L\otimes_\F\F^{\mathrm{sep}}\simeq \ZZ^r$), and $G/\F$ is a
semiabelian variety. Let $T \subset G$ be its toric part, and $A=G/T$
the abelian variety quotient.  We assume here that $L$ and $T$ split
over an extension of $\F$ in which $R$ is unramified. Then $L$ extends to a local
system $\Lambda$ on $S$. Let $\cG$ be the N\'eron model of $G$. By the
N\'eron property, $f$ extends to a morphism $f_S\colon \Lambda\to \cG$
of $S$-group schemes, and by definition, the N\'eron model of $M$ is
the complex of $S$-group schemes
\[
  \mathcal{M} = [\, \Lambda \xrightarrow{f_S} \cG \,].
\]
Its component complex is the complex of $\Gal(\ks/k)$-modules
\[
\comp{M} = [\, \Lambda_{\bar s} \to \comp{G}\, ]
\]
in degrees $-1$ and $0$. (In \cite{Suz19} this complex is denoted
$\mathcal{P}(\mathcal{M})$.)

Let $M'$ be the $1$-motive dual to $M$. So
\[
  M' = [\, L' \xrightarrow{f'} G' \, ]
\]
where $L'=\Hom(T,\Gm)$ is the character group of $T$, and $G'$ is an
extension $T' \to G' \to A'$, where $A'$ is the dual abelian variety
of $A$, and $T'$ is the torus with character group $L$. Then
\cite[Theorem B]{Suz19} shows that if either
\begin{itemize}
\item[(i)] $A$ has semistable reduction, or
\item[(ii)] $k$ is perfect
\end{itemize}
there is a canonical isomorphism,
in the derived category of $\Gal(\ks/k)$-modules, between $\comp{M'}$ and
$\mathrm{RHom}(\comp{M},\ZZ)[1]$.

Now let $\cX/S$ be as in Section \ref{sec:hard-bit}.  We will assume
that $R$ is strictly Henselian. Suppose that all $n_j$ are zero (which
holds, for example, if $k$ is perfect), that $\delta=1$, and that the
points $(x_i)_{i\in I}$ are all $\F$-rational.

Since $J$ is autodual, the dual $1$-motive to $J_\fm$ is the $1$-motive
\[
  M = [\, \ZZ[I]\zero \to J\, ],\qquad i \mapsto \cO_X(x_i)
\]
whose component complex $\comp{M}$ is the complex
$[\, \ZZ[I]\zero \to \comp{J} \,]$ of abelian groups, concentrated in
degrees $-1$ and $0$. Using the description \eqref{eq:raynaud} of
$\comp{J}$, this is isomorphic to the complex
$[\, \ZZ[I]\zero \to \ZZ^{C,0}/a(\ZZ^C) \,]$.

As $\delta=1$, by \cite[(8.1.2)]{Ray70} the complex \eqref{eq:raynaud}
has only one nonzero homology group, namely $\ker(b)/\im(a)=\comp{J}$,
and the map $a$ is given by the intersection pairing on the components
of the special fibre. Therefore $\comp{M}$ is quasi-isomorphic to the
complex
\begin{equation}
  \label{eq:dual-complex}
  \begin{tikzcd}
    \ZZ \arrow[r, "{(i, 0)}"]
    & \ZZ[C]\oplus \ZZ[I]\zero \arrow[r, "{a\oplus {}^th}"] & \ZZ^C \arrow[r, "{b}"] & \ZZ.
\end{tikzcd}
\end{equation}
Here ${}^th\colon \ZZ[I]\zero \to \ZZ^C$ is the transpose of $h$, and the
term $\ZZ^C$ is in degree $0$. The dual of \eqref{eq:dual-complex} is
\[
  \begin{tikzcd}
    \ZZ \arrow[r, "{{}^tb}"] & \ZZ[C] \arrow[r, "{({}^ta, h)}"] & \ZZ^C \oplus \ZZ^I/\ZZ
    \arrow[r, "{{}^ti\oplus 0}"] & \ZZ
  \end{tikzcd}
\]
The assumption $n_j=0$ ensures that $a$ is symmetric, and that $i$ and
$b$ are transposes of one another, by
\eqref{eq:raynaud-defs}. Assuming that one of (i), (ii) above holds,
we then recover the description of $\comp{J_\fm}$ as the homology of
\eqref{eq:raynaud2}.

\subsection{Functoriality II}
\label{sec:functoriality2}

We will need to understand the action of correspondences on
generalized Jacobians and their N\'eron models.

Suppose that we have two smooth geometrically connected curves $X$,
$X'$ over $\F$, with regular models $\cX$, $\cX'$ satisfying the
hypotheses of \S\ref{sec:raynaud}. Let
$\fm=\sum_{i\in I}(x_i)$,
$\fm'=\sum_{j\in I'}(\xp_j)$, be nonzero moduli on $X$, $X'$. As in
\S\ref{sec:hard-bit} we assume that the points $x_i$, $\xp_j$ are
distinct, and that their residue fields
\[
F_i=\kappa(x_i),\quad F'_j = \kappa(\xp_j)
\]
are separable over $F$. Let $R_i$, $R'_j$ denote the integral closures
of $R$ in $F_i$, $F'_j$, and $\kModulus=\coprod_{i\in I}\Spec R_i$,
$\kModulus'=\coprod_{j\in I'}\Spec R_j'$.  Let $J_\fm$, $J'_{\fm'}$ be
the associated generalized Jacobians.

Let $f\colon X' \to X$ be a finite morphism 
such that $f^{-1}(\kModulus_F)=\kModulus'_F$ as sets. We write
$f\colon I' \to I$ also for the induced surjective map on index sets. For
$j\in I'$, denote by $r_j$ the ramification degree of $f$ at $\xp_j$.

The discussion in \S\ref{sec:functoriality1} applies, and since $f^*$, $f_*$ preserve line
bundles of degree zero, we obtain morphisms
\[
  f^* \colon J_\fm \to J'_{\fm'}, \quad f_* \colon J'_{\fm'} \to J_\fm\,.
\]
By the universal N\'eron property, they extend uniquely
to morphisms $f^*$, $f_*$ of the N\'eron models $\cJ_\fm$,
$\cJ'_{\fm'}$. Let the induced homomorphisms of character groups be
$\XX(f^*)$, $\XX(f_*)$ and of
component groups $\Phi(f^*)$, $\Phi(f_*)$. In the next section we will
need to know explicitly the restriction of these maps to the tori
$\cT_\fm$, $\cT'_{\fm'}$. For (a) and (b) below, recall that we have a
canonical isomorphism
\[
  \Hom(T_{\fm}\otimes \Fsep,\Gm) \isom \ZZ[\kModulus(\Fsep)]^{\deg=0}
\]
and similarly for $T'_{\fm'}$.

\begin{prop}  \mbox{ }
\label{prop:tori-funct}
\begin{enumerate}[\upshape (a)]
\item
\label{prop:tori-funct1}
  The map
  \[
    f^* \colon T_\fm = \Bigl(\prod_{i\in I} R_{\F_i/\F}\Gm\Bigr)/\Gm
    \to T'_{\fm'} = \Bigl(\prod_{j\in I'} R_{\Fp_j/\F}\Gm\Bigr)/\Gm 
  \]
  is induced by the inclusions $f^* \colon F_i \hookrightarrow F'_j$,
  $i=f(j)$. Its transpose is the homomorphism
  \[
    f_* \colon  \ZZ[\kModulus'(\Fsep)]^{\deg=0} \to\ZZ[\kModulus(\Fsep)]^{\deg=0}
  \]
  given by pushforward of divisors of degree zero.
\noop{
  \[
    \begin{tikzcd}
      \Hom(T'_{\fm'}\otimes \Fsep,\Gm) \arrow[r] \arrow[d, equal]&
      \Hom(T_{\fm}\otimes \Fsep,\Gm) \arrow[d, equal]  \\
      \ZZ[\kModulus'(\Fsep)]^{\deg=0} \arrow[r, "f"] &
      \ZZ[\kModulus(\Fsep)]^{\deg=0}\, .
    \end{tikzcd}
  \]
  }
\item
\label{prop:tori-funct2}
  The map $f_*\colon T'_{\fm'} \to T_\fm$ is given by the morphisms of
  tori, for $i=f(j)$,
  \begin{align*}
    \R_{\Fp_j/\F_i}\Gm & \to \R_{\F_i/\F}\Gm \\
    t &\mapsto (N_{\Fp_j/\F_i}t)^{r_j}\,.
  \end{align*}
  Its transpose is the homomorphism
  \[
    f^* \colon \ZZ[\kModulus(\Fsep)]^{\deg=0} \to
    \ZZ[\kModulus'(\Fsep)]^{\deg=0}
  \]
  given by pullback of divisors.
\item
\label{prop:tori-funct3}
Assume that $R=\Rsh$. Then the induced maps between component groups
$\comp{T_\fm}$, $\comp{T'_{\fm'}}$ are
  \[
    \begin{tikzcd}
      \llap{$\Phi(f^*) \colon\ $} \comp{T_\fm}
      = \ZZ^I/e\ZZ  \arrow[r]
      & \comp{T'_{\fm'}} \simeq  \ZZ^{I'}/e'\ZZ \\[-4ex]
      (n_i)_{i\in I}  \arrow[r, mapsto] &\left(
        (e'_j/e_{f(j)})n_{f(j)}\right)_{j\in I'} \\
      \llap{$\Phi(f_*) \colon \qquad$}\ZZ^{I'}/e'\ZZ 
      \arrow[r] &  \ZZ^{I}/e\ZZ \\[-4ex]
      (n_j)_{j\in I'} \arrow[r, mapsto]&
      \displaystyle{\Bigl(\sum_{j\in f^{-1}(\{i\})} r_j n_j\Bigr)_{i\in I}}
    \end{tikzcd}
  \]
\item
\label{prop:tori-funct4}
  Assume that $R=\Rsh$, and that $k$ is algebraically closed \textup(\!so that
  $\kModulus(k)\simeq I$, $\kModulus'(k)\simeq I'$\textup). Then the induced
  maps on character groups of N\'eron models are
  \[
    \begin{tikzcd}
      \llap{$\XX(f^*) \colon\ $}
      \ch{T'_{\fm'}} \arrow[r] \arrow[d, equal]
      & \ch{T_\fm} \arrow[d, equal]\\[-2ex]
       \ZZ[I']^{\deg=0} \arrow[r, "f"] & \ZZ[I]^{\deg 0} \\
      \llap{$\XX(f_*) \colon\ $}  \ZZ[I]^{\deg=0} \arrow[r, "f"] & \ZZ[I']^{\deg 0} \\[-4ex]
      r(i) \arrow[r, mapsto] & \displaystyle{\sum_{j\in f^{-1}(\{i\})}} r_j[\Fp_j:\F_i](j)      
    \end{tikzcd}
  \]
\end{enumerate}
\end{prop}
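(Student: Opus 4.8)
The plan is to deduce all four parts from the constructions of \S\ref{sec:functoriality1} together with Proposition \ref{prop:tori-neron}, dealing first with the generic-fibre assertions (a), (b) and then with the assertions (c), (d) over the strictly Henselian base.

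For (a) and (b), observe that $X$ and $X'$ are smooth, so in the notation of \S\ref{sec:functoriality1} one has (over $\Fsep$) $A=B=\emptyset$, $\kModulus^\sing=\emptyset=\kModulus^{\prime\sing}$, and $C$, $C'$ are single points; in particular Hypotheses \ref{hyp:funct-simp} hold automatically (using that $\kModulus'_\F=f^{-1}(\kModulus_\F)$ as sets and $\kModulus'$ is reduced), so Propositions \ref{prop:upper-star} and \ref{prop:lower-star} apply over $\Fsep$ and descend to $\F$. Here the differential \eqref{eq:funct4} degenerates to the degree map $\ZZ[\kModulus'(\Fsep)]\to\ZZ$, the character group of $T_\fm$ is $\ZZ[\kModulus(\Fsep)]^{\deg=0}$, and the two propositions identify $\XX(f^*)$ with pushforward and $\XX(f_*)$ with pullback of degree-zero divisors --- these are the transpose statements. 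The descriptions of $f^*$, $f_*$ on the tori themselves are read off from the proofs of those propositions in the smooth case: the restriction of $f^*$ to the linear part is restriction of the rigidification tuple along $\kModulus'\to\kModulus$, which on $\R_{\kModulus_\F/\F}\Gm=\prod_i \R_{\F_i/\F}\Gm$ is the morphism induced by the field inclusions $\F_i\hookrightarrow\Fp_j$ with $i=f(j)$; and the restriction of $f_*$ is the map $\hat f_!$ from the proof of Proposition \ref{prop:lower-star}, namely $(c'_{z'})\mapsto(\prod_{f(z')=z}(c'_{z'})^{r_{z'/z}})$, which --- since $r_{z'/z}=r_j$ for every geometric point $z'$ above $\xp_j$ --- is $\R_{\F_i/\F}$ applied to $t\mapsto(N_{\Fp_j/\F_i}t)^{r_j}$.

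For (c), recall $\cT_\fm=(\R_{\kModulus/S}\cGm)/\cGm$ with $\R_{\kModulus/S}\cGm=\prod_i \R_{R_i/R}\cGm$. As $R=\Rsh$ and $\F_i/\F$ is separable it is totally ramified, say of degree $e_i$, and $\comp{\R_{\F_i/\F}\Gm}=\ZZ$ via the valuation $v_{\F_i}$ (Proposition \ref{prop:tori-neron}(a)). Applying Corollary \ref{cor:tori-exact} to $0\to\Gm\to\prod_i\R_{\F_i/\F}\Gm\to T_\fm\to0$ gives $\comp{T_\fm}=\coker(\ZZ\xrightarrow{e}\ZZ^I)$ with $e=(e_i)$, because the diagonal $\cGm\to\prod_i\R_{R_i/R}\cGm$ is multiplication by $e_i$ on valuations. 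By (a), $\Phi(f^*)$ is induced on the factor $\F_{f(j)}\hookrightarrow\Fp_j$, under which $v_{\Fp_j}=e(\Fp_j/\F_{f(j)})\,v_{\F_{f(j)}}=(e'_j/e_{f(j)})\,v_{\F_{f(j)}}$ (the equality $e(\Fp_j/\F_{f(j)})=[\Fp_j:\F_{f(j)}]=e'_j/e_{f(j)}$ holds since $\Fp_j/\F_{f(j)}$ is totally ramified), giving the stated formula. By (b), $\Phi(f_*)$ is induced factorwise by $t\mapsto(N_{\Fp_j/\F_i}t)^{r_j}$; since $\Fp_j/\F_i$ is totally ramified one has $v_{\F_i}\circ N_{\Fp_j/\F_i}=v_{\Fp_j}$, so the induced endomorphism of $\ZZ$ is multiplication by $r_j$, giving the stated formula. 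The identity $\sum_{f(j)=i}r_j[\Fp_j:\F_i]=\deg f$, independent of $i$, ensures these maps carry $e$ into $e'\ZZ$, resp.\ $e'$ into $e\ZZ$, so they descend to the quotients.

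For (d), with $k$ algebraically closed, the special fibre of $\R_{R_i/R}\cGm$ is $\R_{(R_i\otimes_R k)/k}\Gm\times\ZZ$ (as in the proof of Proposition \ref{prop:tori-neron}(a)); as $R_i\otimes_R k$ is Artin local with residue field $k$, the factor $\R_{(R_i\otimes_R k)/k}\Gm$ is an extension of $\Gm$ by a unipotent group, hence has maximal torus $\Gm$. Passing to the quotient by $\cGm$, the maximal torus of $\cT_{\fm,s}^\lin$ is $\Gm^I/\Gm$, so $\ch{T_\fm}=\ZZ[I]^{\deg=0}$. By (a), $f^*$ on the factor $\R_{R_i/R}\cGm\to\R_{R'_j/R}\cGm$ is induced by $R_i\to R'_j$, the identity on residue fields, hence the identity on maximal tori of special fibres; dualizing, $\XX(f^*)$ is the pushforward $\ZZ[I']^{\deg=0}\to\ZZ[I]^{\deg=0}$, $(j)\mapsto(f(j))$. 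By (b), $f_*$ on the factor $\R_{R'_j/R}\cGm\to\R_{R_i/R}\cGm$ is $\R_{R_i/R}$ applied to the $r_j$-th power of $N_{\Fp_j/\F_i}$; modulo nilpotents, $N_{\Fp_j/\F_i}$ becomes the $[\Fp_j:\F_i]$-th power on $k^\mul$ (every $\F_i$-embedding of $\Fp_j$ induces the trivial residue-field map, $\Fp_j/\F_i$ being totally ramified), so on maximal tori it is $x\mapsto x^{r_j[\Fp_j:\F_i]}$, and dualizing yields $\XX(f_*)\colon (i)\mapsto\sum_{f(j)=i}r_j[\Fp_j:\F_i]\,(j)$. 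The step requiring the most care is this last one: that the Weil restriction along the \emph{non-reduced} Artin ring $R_i\otimes_R k$ contributes only a single $\Gm$ to the linear part of the special fibre of the (non-finite-type) N\'eron model $\cT_\fm$, and the correct tracking of how the norm and the field inclusion reduce on that $\Gm$; once (a), (b) and this structural fact are available, the remainder is bookkeeping with the invariants $r_j$ and $[\Fp_j:\F_i]=e'_j/e_i$ and the standard relations $v\circ N=f_{\mathrm{res}}\cdot v$, $v\circ\iota=e\cdot v$.
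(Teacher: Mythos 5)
Your argument is correct and is essentially an expanded version of the paper's own proof, which likewise deduces (a) and (b) from Propositions \ref{prop:upper-star} and \ref{prop:lower-star} in the degenerate case $A=B=\Sigma^{\sing}=\emptyset$, $C=\{*\}$ (so that $\XX(f^*)$ is pushforward and $\XX(f_*)$ is pullback of degree-zero divisors on the cuspidal schemes), and then obtains (c) and (d) by combining with Proposition \ref{prop:tori-neron}. The only point to watch is that your intermediate identities $e(\Fp_j/\F_{f(j)})=[\Fp_j:\F_{f(j)}]$ and $v_{\F_i}\circ N_{\Fp_j/\F_i}=v_{\Fp_j}$ assume the residue extensions are trivial, which holds when $k$ is perfect (the case relevant to the applications) and is in any case implicit in the formula of part (c) as stated.
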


\begin{proof}
 For (\ref{prop:tori-funct1}) and (\ref{prop:tori-funct2}), it
 suffices to compute the map on character groups. The formulae are
 then special cases of Propositions \ref{prop:upper-star} and
 \ref{prop:lower-star} with $A=B=\Sigma^{\sing}=\emptyset$, $C=\{*\}$.

 Combining these with Proposition \ref{prop:tori-neron} then gives the
 remaining parts.

\end{proof}

\begin{rem}
  From (a) and (b) we see that if $f$, $f'\colon X'\to X$ are finite
  morphisms and $\fm$ is a reduced modulus on $X$ which is stable
  under the correspondence $A=f_*f^{'*}$ (in the sense of Example
  \ref{ex:functoriality1}), then the induced endomorphism ${}^tA$ of
  the character group $\Hom(T_\fm\otimes\Fsep,\Gm)$ equals the map
  $D \mapsto f'_*f^*D$ on divisors of degree zero.
\end{rem}

\section{Generalized Jacobians of modular curves}

\subsection{Generalities on modular curves}
\label{sec:mod-gen}

For an integer $N\ge1$, let $X_0(N)_\QQ$ denote the usual complete
modular curve over $\QQ$. Its non-cuspidal points parametrize pairs
$(E,C)$, where $E$ is an elliptic curve (over some $\QQ$-scheme) and
$C\subset E$ is a subgroup scheme which is cyclic
of order $N$. We write $X_0(N)_\ZZ$ for the integral model constructed
by Katz and Mazur \cite[Ch. 8]{KM85}, which they denote $\overline
M([\Gamma_0(N)])$. Its non-cuspidal points parametrize pairs 
$(E,C)$, where $C\subset E$ is a subgroup scheme of rank $N$ which is
cyclic in the sense of \emph{loc.~cit.}, \S6.1 --- see also \cite[\S1.1]{Edi90}.  

For every prime $\ell$ we have a Hecke correspondence $T_\ell=v_*u^*$,
where the finite morphisms $u=u_\ell$, $v=v_\ell$ are given by:
\[
  \begin{tikzcd}[column sep=small]
    & X_0(N\ell) \arrow[ld, "u"'] \arrow[rd, "v"]
    &&& (E,C) \arrow[ld, mapsto, "u"'] \arrow[rd, mapsto, "v"] \\
    X_0(N) && X_0(N) & (E,\ell C)&& \!\!\!\!(E/NC,C/NC)\!\!\!\!
  \end{tikzcd}
\]
For $\ell\notdivides N$ (resp.~$\ell\divides N$), the morphisms $u$, $v$ are
of degree $\ell+1$ (resp.~$\ell$). For $p\divides N$ we also have the
Atkin-Lehner involution $W_p\colon X_0(N) \to X_0(N)$.
If $v_p(N)=r\ge1$ then 
\[
  W_p\colon (E,C) \mapsto
  (E/(C\cap E[p^r]), (C+E[p^r])/ (C\cap E[p^r])).
\]
When $\ell\notdivides N$, $u=v\circ W_\ell$. and the correspondence
$T_\ell$ is symmetric. When $\ell\divides N$, $T_\ell$ is no longer
symmetric, and what we call $T_\ell$ is often elsewhere defined to be
the transpose of $T_\ell$ (and also often written $U_\ell$). We have
chosen our normalisations so that the endomorphism $T_\ell=v_*u^*$ of the Jacobian $J_0(N)_\QQ$
agrees with the Hecke operator in \cite[p. 445]{Rib90} defined by
``Picard functoriality''.

Write $X_0(N)^\infty_\QQ \subset X_0(N)_\QQ$ for the cuspidal
subscheme. It is classical that $X_0(N)^\infty_\QQ$ is the disjoint
union, over positive divisors $d\divides N$, of schemes
$z_d\simeq\Spec \QQ(\mmu_{(d,N/d)})$ (where here $(d,N/d)$
  denotes greatest common divisor).  We recall (e.g.~from
\cite[IV.4.11--13]{DeRa72}) that the cusps of $X_0(N)_\QQ$ can be
conveniently described using generalized elliptic curves. Suppose that
$d\divides N$, and let $\Ner_d$ denote the standard N\'eron polygon over
$\QQ$ with $d$ sides \cite[II.1.1]{DeRa72}, whose smooth locus
$\Ner_d^{\reg}$ equals $\Gm\times\ZZ/d$. For a primitive $N$-th root
of unity $\zeta_N\in\overline\QQ$, let $\C{\zeta_N}$ denote the cyclic
subgroup scheme
\[
  \C{\zeta_N}=\langle(\zeta_N,1)\rangle \subset \Ner_d^\reg.
\]

Then the pair
$(\Ner_d,\C{\zeta_N})$ determines a $\QQbar$-point of $X_0(N)^\infty$. If
$\sigma\in\Gal(\QQbar/\QQ(\mmu_d))$, then $\C{\sigma\zeta_N}=
\C{\zeta_N}$, and if $\sigma\in\Gal(\QQbar/\QQ(\mmu_{(d,N/d)}))$ then
the pairs $(\Ner_d,\C{\sigma\zeta_N})$ and $(\Ner_d,\C{\zeta_N})$ are isomorphic (and
the isomorphism is unique if it is required to be the identity on the
identity component of $\Ner_d$). Therefore the isomorphism class of
$(\Ner_d,\C{\zeta_N})$ over $\QQbar$ is determined by the pair
$(d,\zeta_N^{N/(d,N/d)})$ and gives rise to
a closed point $z_d=z_{N,d}\simeq\Spec \QQ(\mmu_{(d,N/d)})$ of $X_0(N)^\infty_\QQ$.

In particular, the (rational) cusps $\infty=z_{N,1}$ and $0=z_{N,N}$ correspond
to the pairs $(\Ner_1,\mmu_N)$ and $(\Ner_N,\{1\}\times \ZZ/N)$,
respectively. We also know that the scheme-theoretic closure of
$X_0(N)^\infty_\QQ$ in $X_0(N)_\ZZ$ is the disjoint union of copies of
$\Spec \ZZ(\mmu_{(d,N/d)})$ (this follows from
\cite[Thm. 1.2.2.1]{Edi90}).

Now let $\fm$ be a reduced modulus on $X_0(N)_\QQ$, whose support is
contained in $X_0(N)_\QQ^\infty$. Let $\ell$ be any prime such that the
support of $\fm$ is stable under $T_\ell$, in the sense of Example
\ref{ex:functoriality1}.  Then $T_\ell$ determines an endomorphism
$T_\ell=v_*u^*$ of $J_\fm$. Let $p\divides N$, and let $\cJ_\fm$ be the N\'eron model of
$J_\fm$ over $\ZZ_{(p)}$. By the universal N\'eron
property, $T_\ell$ extends to an endomorphism of $\cJ_\fm$, and
therefore induces endomorphisms
\begin{align*}
  T_\ell \colon \comp{J_\fm} &\to \comp{J_\fm} \\
  {}^tT_\ell \colon  \ch{J_\fm} &\to \ch{J_\fm}.
\end{align*}
In order to compute these endomorphisms combinatorially, we need to
compute the action of $T_\ell$ on the torus $\cT_\fm$, using the
formulae of Proposition \ref{prop:tori-funct}.  In other words, we
need to compute the restrictions of $u=u_\ell$, $v=v_\ell$ to the
cusps, along with the ramification degrees.

Let $\zeta_{N\ell}\in\QQbar$ be a primitive $N\ell$-th root of unity,
and for $L\divides N\ell$, $\zeta_L=\zeta_{N\ell}^{N\ell/L}$. Let
$z=(\Ner_d,\C{\zeta_{N\ell}}) \in X_0(N\ell)^\infty(\QQbar)$ be a
cusp. Then $u(z)\in X_0(N)^\infty(\QQbar)$ is obtained as follows:
replace $\C{\zeta_{N\ell}}$ by
$\ell\C{\zeta_{N\ell}}=\langle(\zeta_N,\ell)\rangle\subset
\Gm\times\ZZ/d$, and then contract any components of $\Ner_d$ which do
not meet it \cite[IV.1.2]{DeRa72}. Similarly, we obtain $v(z)$ as the
quotient of $(\Ner_d,\C{\zeta_{N\ell}})$ by the rank-$\ell$ group scheme
$N\C{\zeta_{N\ell}}=\langle (\zeta_\ell,N) \rangle \subset\Gm\times\ZZ/d$.

Explicitly, suppose that $N=M\ell^k$, $(\ell,M)=1$, and that
$d\divides N\ell$. Let $a$, $b\in\ZZ$ with $a\ell+bM=1$ and
$a\equiv 1$ (mod~$\ell^k$). Then if $\ell\notdivides d$,
\[
(\Ner_d,\langle(\zeta_N,\ell)\rangle)=(\Ner_d,\C{\zeta_N^a})
\]
but if $\ell\divides d$, the subgroup $\langle(\zeta_N,\ell)\rangle)$
does not meet the components $\Gm\times\{i\}$ with
$(i,\ell)=1$. The map $\Ner_d \to \Ner_{d/\ell}$ contracting them 
takes $\langle(\zeta_N,\ell)\rangle$ to $\C{\zeta_N}$, and so
\[
u\colon (\Ner_d,\C{\zeta_{N\ell}}) \mapsto
\begin{cases}
  (\Ner_d,\C{\zeta_N^a}) &\text{if $\ell\notdivides d$} \\
  (\Ner_{d/\ell},\C[d/\ell]{\zeta_N}) &\text{otherwise.}
\end{cases}
\]
On closed points of $X_0(N)_\QQ^\infty$ we then have
\begin{equation}
  \label{eq:ucusps}
u^{-1}(z_{N,d}) = \begin{cases}
\{ z_{N\ell, d\ell} \} & \text{if $\ell | d$} \\
\{ z_{N\ell, d},\ z_{N\ell, d\ell} \} & \text{otherwise.}
\end{cases}
\end{equation}
If $\ell\not\!|N$ then $u$ has degree $\ell+1$ and
\[
\deg z_{N,d} = \deg z_{N\ell,d} = \deg z_{N\ell,d\ell} = \phi((d, N/d)).
\]
It is well known (and follows, for example, from the Eichler-Shimura congruence relation) that $u$ is \'etale at $z_{N\ell,d}$, and so has ramification degree $\ell$ at $z_{N\ell,d\ell}$.

Suppose now that $k\ge1$ and $d_0 | M$, $d=d_0\ell^s$. Then $u$ has degree $\ell$ and
\begin{align*}
\deg z_{N\ell,d} &=
\phi(\ell^{\min(s,k+1-s)}) \phi((d_0,M/d_0)) \\
\deg z_{N\ell,d/\ell} &=
\phi(\ell^{\min(s-1,k+1-s)}) \phi((d_0,M/d_0))\quad\text{if $s\ge1$}
\end{align*}
so by \eqref{eq:ucusps}, the ramification degree of $u$ at $z_{N\ell,d}$ equals
\begin{align*}
1 &\quad \text{if $1< s \le (k+1)/2$, and} \\
\ell &\quad \text{if $(k+1)/2 < s \le k+1$.}
\end{align*}
Moreover, since
\begin{gather*}
\deg z_{N\ell, d_0\ell}  = (\ell-1)\phi((d_0,M/d_0)) \\
\deg z_{N\ell, d_0}  =\deg z_{N, d_0}  =\phi((d_0,M/d_0))
\end{gather*}
the ramification degree equals $1$ also for $s\in\{0,1\}$.

Similarly, if $d\divides N$, then the subgroup
$N\C{\zeta_{N\ell}}\subset \Ner_d^\reg=\Gm\times\ZZ/d$ equals
$\mmu_\ell\times\{0\}$, and therefore is the kernel
of the endomorphism $(t,i)\mapsto (t^\ell,i)$ of $\Ner_d$. If
$d\divides N\ell$ but $d\notdivides N$ then
$N\C{\zeta_{N\ell}}=\langle(\zeta_\ell,N)\rangle$ is the kernel of the
map $\Ner_d\to \Ner_{d/\ell}$, $(t,i)\mapsto
(t\zeta_{\ell^k+1}^{-bi},i\text{ mod }d/\ell)$, which maps
$(\zeta_{N\ell},1)$ to $(\zeta_N^a,1)$, and therefore
\[
v\colon (\Ner_d,\C{\zeta_{N\ell}}) \mapsto
\begin{cases}
  (\Ner_d,\C{\zeta_N}) &\text{if $d\divides N$} \\
  (\Ner_{d/\ell},\C[d/\ell]{\zeta_N^a}) &\text{otherwise}.
\end{cases}
\]
A similar computation as for $u$ shows that the ramification degree of
$v$ at $z_{N\ell,d}$ is $1$ if $v_\ell(d)\ge (k+1)/2$, and  $\ell$
otherwise.

In particular, if $\fm$ is any reduced modulus supported on
$X_0(N)^\infty_\QQ$, then for every $\ell\notdivides N$, $\fm$ is
stable under $T_\ell$ (in the sense of Example
\ref{ex:functoriality1}) and therefore we obtain an endomorphism
$T_\ell=v_*u^*$ of the generalized Jacobian $J_\fm=J_0(N)_\fm$. If
$\fm$ is the full cuspidal modulus (i.e.~the reduced modulus whose
support is $X_0(N)^\infty_\QQ$) then $\fm$ is stable under $T_\ell$
for every $\ell$. Using the formulae from Proposition
\ref{prop:tori-funct} together with the fact that $(\Ner_d,\C{\zeta_N})$ depends
only on $(d,\zeta_N^{N/(d,N/d)})$, we can compute the induced endomorphism ${}^tT_\ell$ of the character group
\[
  \Hom(T_\fm \otimes\QQbar,\Gm)=\ZZ[X_0(N)^\infty(\QQbar)]^{\deg=0}
\]
which is the restriction of $u_*v^*$ to divisors of degree zero.

\begin{prop}
  \label{prob:Tl-tori}
  \textup{(a)} If $(\ell,N)=1$, then 
  \[
    {}^tT_\ell(\Ner_d,\C{\zeta_N})=(\Ner_d,\C{\zeta_N^\ell})+\ell(\Ner_d,\C{\zeta_N^a}).
  \]
  \textup{(b)} If $N=M\ell^k$ with $(M,\ell)=1$ and $k>0$, and $v_\ell(d)=i$, then let
  $d=d_0\ell^i$, $e_0=(d_0,M/d_0)$,
  $\Gamma_i=\Gal(\QQ(\mmu_{e_0\ell^{k+1-i}})/\QQ(\mmu_{e_0\ell^{k-i}}))$. Then 
  \[
    {}^tT_\ell(\Ner_d,\C{\zeta_N})=
    \begin{cases}
      \ell(\Ner_d,\C{\zeta_N^a}) & i=0\\
      \ell(\Ner_{d/\ell},\C[d/\ell]{\zeta_N}) & 0<i < (k+1)/2\\
      \sum_{\sigma\in\Gamma_i}\sigma(\Ner_{d/\ell},\C[d/\ell]{\zeta_N})
      & (k+1)/2\le i < k \\
      \sum_{\sigma\in\Gamma_k}\sigma(\Ner_{d/\ell},\C[d/\ell]{\zeta_N})
      +(\Ner_d,\C{\zeta_N^q}) & i=k 
    \end{cases}
  \]
  where $a$, $b$ are as above, and $aq\equiv 1$ \textup{(mod $N$)}.
\end{prop}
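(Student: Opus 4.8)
The plan is to reduce the statement, exactly as in the paragraph immediately preceding it, to an explicit computation of the correspondence $u_*v^*$ (with $u=u_\ell$, $v=v_\ell$) on the divisor group $\ZZ[X_0(N)^\infty(\QQbar)]$. By Proposition~\ref{prop:tori-funct} and the fact that $\fm$ (or the full cuspidal modulus) is stable under $T_\ell$ in the range considered, the endomorphism ${}^tT_\ell$ of $\Hom(T_\fm\otimes\QQbar,\Gm)=\ZZ[X_0(N)^\infty(\QQbar)]^{\deg=0}$ is the restriction of $D\mapsto u_*v^*D$. After base change to $\QQbar$ one has $v^*[P]=\sum_{v(Q)=P}e_v(Q)[Q]$ (sum over geometric cusps $Q$ of $X_0(N\ell)$, $e_v(Q)$ the ramification index) and $u_*[Q]=[u(Q)]$; so the whole computation is to (i) describe the geometric fibre $v^{-1}(P)$ with multiplicities, then (ii) apply $u$, using the explicit cusp formulae for $u$, $v$ and the ramification formulae already recorded above.

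I would fix $P=(\Ner_d,\C{\zeta_N})$ with $d\divides N$, set $i=v_\ell(d)$, and encode $P$ by its \emph{invariant} $\xi_P=\zeta_N^{N/(d,N/d)}$, a primitive $(d,N/d)$-th root of unity (the cusp depends only on $d$ and $\xi_P$). The device that organises everything is that $u$ and $v$ act on invariants very simply: comparing $\ell N/(d',N/d')$ with $N\ell/(d',N\ell/d')$ shows they differ only by a power of $\ell$ determined by $v_\ell(d')$, $v_\ell(N)=k$, and the thresholds $(k+1)/2$ and $k$, so the cusp formulae for $u$, $v$ become, on invariants, $\xi\mapsto\xi$, $\xi\mapsto\xi^{\ell}$ or $\xi\mapsto\xi^{a}$ according to which of these holds. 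Tracing through this: if $\ell\notdivides N$, $v^{-1}(P)$ is the unique polygon-$\Ner_d$ cusp with invariant $\xi_P$ (here $e_v=\ell$) together with the unique polygon-$\Ner_{\ell d}$ cusp with invariant $\xi_P^{\ell}$ (here $e_v=1$); if $N=M\ell^k$ with $k>0$, every preimage has polygon $\Ner_d$ unless $i=k$, in which case there is in addition one preimage of polygon $\Ner_{\ell d}$, while the polygon-$\Ner_d$ preimages together with their ramification are read off from the fibre of the relevant $\ell$-power map on primitive roots of unity: a single point with $e_v=\ell$ when $i<(k+1)/2$, exactly $\ell$ points with $e_v=1$ when $(k+1)/2\le i<k$, and exactly $\ell-1$ points with $e_v=1$ when $i=k$ (since then $\ell\notdivides(d,N/d)=e_0$). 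In every case $\sum_Q e_v(Q)=\deg v$, which is $\ell+1$ when $\ell\notdivides N$ and $\ell$ otherwise; I would carry this along as a consistency check.

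Then I would apply $u_*$, i.e.\ $u$ on geometric cusps with multiplicity one, and collect the output into the stated cases. Consider first part (b). If $v^{-1}(P)$ is a single polygon-$\Ner_d$ cusp with $e_v=\ell$ (the cases $i<(k+1)/2$), then $u$ sends it to $(\Ner_d,\C{\zeta_N^a})$ when $i=0$ (as $\ell\notdivides d$) and to $(\Ner_{d/\ell},\C[d/\ell]{\zeta_N})$ when $0<i$ (as $\ell\divides d$, so $u$ contracts), yielding the first two cases of (b). If instead $v^{-1}(P)$ consists of $\ell$ (resp.\ $\ell-1$, when $i=k$) polygon-$\Ner_d$ cusps, these are $\Gal(\QQbar/\QQ)$-conjugate, $\ell\divides d$, and their $u$-images lie in the orbit $z_{d/\ell}^{(N)}$; tracking invariants shows the images differ precisely by the action of $\Gamma_i=\Gal(\QQ(\mmu_{e_0\ell^{k+1-i}})/\QQ(\mmu_{e_0\ell^{k-i}}))$ (with $d=d_0\ell^i$, $e_0=(d_0,M/d_0)$), so their contribution is $\sum_{\sigma\in\Gamma_i}\sigma(\Ner_{d/\ell},\C[d/\ell]{\zeta_N})$, which has $|\Gamma_i|$ distinct terms by a check on the residue field of a point of $z_{d/\ell}^{(N)}$. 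When $i=k$ there is moreover one polygon-$\Ner_{\ell d}$ preimage, pushed forward by $u$ (which here contracts $\Ner_{\ell d}$ back to $\Ner_d$) to the extra term $(\Ner_d,\C{\zeta_N^q})$ with $aq\equiv1\pmod N$. Finally, in part (a) ($\ell\notdivides N$) the two preimages of $P$ give, via $u$, the terms $\ell\,(\Ner_d,\C{\zeta_N^a})$ and $(\Ner_d,\C{\zeta_N^\ell})$. In each case the answer has degree $\ell+1$ (resp.\ $\ell$), matching $\deg T_\ell$.

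The hard part will not be any single conceptual step but the $\ell$-adic bookkeeping: the fibre $v^{-1}(P)$ changes shape as $i=v_\ell(d)$ crosses $(k+1)/2$ (a single ramified point becomes $\ell$ unramified points) and again at $i=k$ (part of it moves onto the polygon $\Ner_{\ell d}$), and in the multi-point cases one must identify the $\Gal(\QQbar/\QQ)$-orbit of the $u$-images with the coset $\Gamma_i$, which needs the residue fields of the intermediate cusps of $X_0(N\ell)$ and of the cusps $z_{d/\ell}^{(N)}$. Keeping every exponent of $\ell$ and every auxiliary congruence (involving $a$, $b$, $q$) consistent across all the cases is where the real effort goes.
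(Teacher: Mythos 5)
Your proposal follows essentially the same route as the paper's proof: both compute ${}^tT_\ell=u_*v^*$ on cuspidal divisors by first describing the fibre $v^{-1}(\Ner_d,\C{\zeta_N})$ with its ramification multiplicities (a single $\Ner_d$-point with multiplicity $\ell$ when $i<(k+1)/2$, $\ell$ Galois-conjugate points when $(k+1)/2\le i<k$, $\ell-1$ conjugates plus one $\Ner_{d\ell}$-point when $i=k$, and the extra $\Ner_{d\ell}$-point with the $\ell$-fold $\Ner_d$-point when $k=0$) and then applying the explicit cusp formula for $u$, exactly as in the displayed case analysis of the paper. Your added bookkeeping via the invariant $\xi_P=\zeta_N^{N/(d,N/d)}$ and the degree check $\sum e_v(Q)=\deg v$ are harmless refinements of the same argument.
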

(In (b) $\Gamma_i\simeq (\ZZ/\ell\ZZ)^\mul$ if $i=k$ and $\ZZ/\ell\ZZ$
otherwise, so consistent with $\deg{}^tT_\ell=\ell$.)

\begin{proof}    
  First note that if $v_\ell(d)=k+1$ then
  \[
    v\colon (\Ner_d,\C{\zeta_{N\ell}^q}) \mapsto
    (\Ner_{d/\ell},\C[d/\ell]{\zeta_N}). 
  \]
  \boxed{k=0} Then
  \[
    v^*(\Ner_d,\C{\zeta_N}) = \ell(\Ner_d,\C{\zeta_{N\ell}}) +
    (\Ner_{d\ell},\C[d\ell]{\zeta_{N\ell}^q})
  \]
  hence (since $q\equiv \ell$ mod $N$ when $k=0$)
  \[
    {}^tT_\ell(\Ner_d,\C{\zeta_N}) = \ell(\Ner_d,\C{\zeta_{N}^a})
    + (\Ner_{d},\C[d\ell]{\zeta_{N}^\ell}).
  \]
  \boxed{k>0} Then if $v_\ell(d)<(k+1)/2$,
  \[
    v^*(\Ner_d,\C{\zeta_N}) = \ell(\Ner_d,\C{\zeta_{N\ell}})
  \]
  and applying $u_*$ to this gives $\ell(\Ner_d,\C{\zeta_{N}^a})$.
  
  If $(k+1)/2\le v_\ell(d) <k$ then the inverse image of the cusp
  $(\Ner_d,\C{\zeta_N})$ is the union of $\ell$ cusps conjugate to
  $(\Ner_d,\C{\zeta_{N\ell}})$, namely
  \[
    v^*(\Ner_d,\C{\zeta_N}) = \sum_{\sigma\in\Gamma_i}
    \sigma(\Ner_d,\C{\zeta_{N\ell}}).
  \]
  Finally if $v_\ell(d)=k$ then
  \begin{align*}
    v^*(\Ner_d,\C{\zeta_N}) = &\sum_\sigma
    \sigma(\Ner_d,\C{\zeta_{N\ell}}) \qquad(\ell-1 \text{ terms}) \\
    &\quad + (\Ner_{d\ell},\C[d\ell]{\zeta_{N\ell}^q})
  \end{align*}
  Apply $u_*$ to this and we get the claimed formula. 
\end{proof}

\begin{exam}
  \label{ex:hecke-calcs}
  Set $D=(0)-(\infty)$. Here are particular cases we will need:
\begin{enumerate}[\upshape (a)]
\item \label{ex:hecke-calcs-Tl} $(\ell,N)=1$,
  $\fm=(\infty)+(0)=(\Ner_1,\mmu_N)+(\Ner_N,\ZZ/N)$. Then
  \[
    {}^tT_\ell\colon D\mapsto (\ell+1)D.
  \]
\item \label{ex:hecke-calcs-p} $N=p$ prime, $\fm=(\infty)+(0)$. Then
  ${}^tT_p\colon D\mapsto D$.
\item \label{ex:hecke-calcs-p2} $N=p^2$. There are $(p+1)$ elements of
  $X_0(p^2)^\infty(\QQbar)$:
  \[
    (\infty) = (\Ner_1,\C[1]{\zeta_{p^2}}),
    \ (0) = (\Ner_{p^2},\C[p^2]{\zeta_{p^2}}),
    \ (\zeta_p)= (\Ner_{p},\C[p]{\zeta_{p^2}})\ (1\ne\zeta_p\in\mmu_p).
    \]
  Then if
  $\ell\ne p$,
  \begin{align*}
{}^tT_\ell\colon\ D
    & \mapsto (\ell+1)D\\
    (\zeta_p)-(\infty)
    & \mapsto \ell(\zeta_p^{1/\ell})
      +(\zeta_p^\ell)
      -(\ell+1)(\infty)
   \intertext{and}
     {}^tT_p\colon\ D
    & \mapsto\sum_{1\ne \zeta_p\in\mmu_p}(\zeta_p)+(0)-p(\infty)\\
    (\zeta_p)-(\infty)
    & \mapsto 0.
  \end{align*}
\end{enumerate}
\end{exam}

\subsection{Character groups}

Assume that $N=pM$, with $p>3$ prime and $(p,M)=1$. Let $\SPSM$ be the
set of supersingular points of $X_0(M)(\FFpbar)$, which is the set of
isomorphism classes of pairs $(E,C)$, where $E/\FFpbar$ is a
supersingular elliptic curve and $C\subset E$ is a cyclic subgroup
scheme of order $M$. 

For $\ell\notdivides M$, we have the Hecke operator
\begin{align*}
  T_\ell \colon \ZZ[\SPSM] & \to \ZZ[\SPSM] \\
  (E,C) &\mapsto \sum_{D\subset E,\ \#D=\ell} (E/D,(C+D)/D).
\end{align*}

\begin{thm}
  \label{thm:characters-supersingular}
  Let $\fm=(\infty)+(0)$ and $J=J_0(N)$ with $N=pM$ as above. Then
  there is a canonical isomorphism
  \[
    \ch{J_\fm} \isom \ZZ[\SPSM]
  \]
  taking ${}^tT_\ell$  to $T_\ell$ for every
  $\ell\notdivides N$. Its restriction to $\ch{J} \hookrightarrow
  \ch{J_\fm}$ is an isomorphism $\ch{J} \isom \ZZ[\SPSM]\zero$.
\end{thm}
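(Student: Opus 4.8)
The plan is to combine the general component/character-group machinery of \S\ref{sec:hard-bit} with the known structure of the special fibre of a regular model of $X_0(N)$ at $p$, and then to identify the combinatorial data with the supersingular module via the Deligne–Rapoport description.

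First I would fix a regular model $\cX/\ZZ_{(p)}$ of $X_0(N)$ satisfying (H1--3). Since $p>3$ and $p\parallel N$, the classical Deligne–Rapoport result gives that $\cX_s$ is (up to resolving a few mild quotient singularities, which do not affect the dual graph on the level we need) a union of two copies of $X_0(M)_{\FFpbar}$ crossing transversally at the supersingular points $\SPSM$; the reductions of the two cusps $\infty$ and $0$ lie one on each of the two components, at nonsupersingular points. In particular $\cX_s$ has only ordinary double points, $d=\delta=1$, so Corollary \ref{cor:charGenJac}(b) (or Corollary \ref{cor:semistable}(a) once one checks semistability after the relevant base change) applies: $\ch{J_\fm}=H_1(\Gamma_{\cX_{\bar s},\Sigma_{\bar s}},\ZZ)$, where $\Gamma_{\cX_s,\Sigma_s}$ is the reduced graph with two ``component'' vertices, an edge for each point of $\SPSM$ joining them, and one extra vertex $\vzero$ joined by a single edge to each of the two component vertices (corresponding to the two cusps). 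Next I would compute this $H_1$ directly: contracting the two cusp-edges and the vertex $\vzero$ turns $\Gamma_{\cX_s,\Sigma_s}$ into a graph with a single vertex and $\#\SPSM$ loops; hence $H_1\cong\ZZ[\SPSM]$ once one makes the orientation conventions explicit, and the sub-$H_1$ coming from $\ch{J}=H_1(\Gamma_{\cX_s},\ZZ)$ (no extra vertex) is visibly $\ZZ[\SPSM]\zero$, the kernel of the augmentation. This gives the isomorphisms of abelian groups; I would spell out the identification so that the class of a loop at a supersingular point $s$ maps to the basis element $(s)$.

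Then comes the Hecke-equivariance, which is the real content. For $\ell\notdivides N$ I would use Functoriality II: by Proposition \ref{prop:tori-funct}, the action of ${}^tT_\ell={}^t(v_*u^*)=u_*v^*$ on the character group $\ch{J_\fm}$ is computed from the restrictions of $u_\ell,v_\ell$ to the cusps (handled in \S\ref{sec:mod-gen}, giving Example \ref{ex:hecke-calcs}(a): $D=(0)-(\infty)\mapsto(\ell+1)D$) together with the action on the graph $\Gamma_{\cX_s}$ coming from the correspondence $u,v$ on the special fibre. On $\cX_s$ the Hecke correspondence acts on each of the two copies of $X_0(M)_{\FFpbar}$ in the usual way, and hence on $\SPSM$ by the classical formula $(E,C)\mapsto\sum_{\#D=\ell}(E/D,(C+D)/D)$; on the intersection points this is exactly the Brandt/Hecke action on $\ZZ[\SPSM]$. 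Translating this through Proposition \ref{prop:lower-star} (the combinatorial description of $\XX(f_*)$, here with $A=B=\Sigma^{\sing}=\emptyset$ on the cuspidal side and the component/intersection data on the singular side) yields that ${}^tT_\ell$ acts as $T_\ell$ on $\ZZ[\SPSM]$. Restricting to $\ch{J}=\ZZ[\SPSM]\zero$, which is $T_\ell$-stable, gives the last sentence; the compatibility of the two boxed inclusions $\ch{J}\hookrightarrow\ch{J_\fm}$ and $\ZZ[\SPSM]\zero\hookrightarrow\ZZ[\SPSM]$ is immediate from the construction since the extra cusp-edges are precisely what the augmentation forgets.

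The main obstacle I anticipate is bookkeeping at the special fibre: one must be careful that the Katz–Mazur model $X_0(N)_\ZZ$ is not regular when $p^2\mid N$ (not an issue here, since $p\parallel N$) and, even for $p\parallel N$, may have quotient singularities at supersingular points where the automorphism group is larger (the $j=0,1728$ cases), so one should either pass to a genuine regular model and check that blowing these up changes neither the dual graph's $H_1$ nor the Hecke action, or invoke the Deligne–Rapoport/Edixhoven description in a form that already accounts for this. A secondary subtlety is pinning down orientations and signs so that ``$T_\ell$ goes to $T_\ell$'' holds on the nose (not up to transpose or sign), which is where the normalisation conventions of \S\ref{sec:mod-gen} and the explicit chain complexes \eqref{eq:pizza1}--\eqref{eq:pizza2} must be tracked carefully; this is routine but error-prone, and is the step I would write out in most detail.
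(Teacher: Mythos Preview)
Your overall strategy matches the paper's: identify $\ch{J_\fm}$ with the $H_1$ of the extended reduced graph via Corollary \ref{cor:charGenJac}, compute that $H_1$ combinatorially as $\ZZ[\SPSM]$ (the paper does this via a short chain of quasi-isomorphisms of two-term complexes rather than by your edge-contraction, but the content is the same), and then check Hecke equivariance using the functoriality results of \S\ref{sec:functoriality1}.

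The gap is in the Hecke step, and it is exactly the obstacle you flag but do not resolve. To apply Propositions \ref{prop:upper-star} and \ref{prop:lower-star} you need the correspondence $(u_\ell,v_\ell)$ to induce finite \emph{flat} maps between the special fibres of \emph{regular} models, satisfying Hypotheses \ref{hyp:funct-simp}. If you pass to the minimal resolution $\cX\to\cX'$ of the Deligne--Rapoport model, there is no reason for $u_\ell,v_\ell$ to extend to morphisms between the resolved models at all, let alone finite flat ones; your suggestion to ``check that blowing these up changes neither the dual graph's $H_1$ nor the Hecke action'' therefore has no obvious implementation. The paper sidesteps this completely with a different manoeuvre: it first treats the case where the Deligne--Rapoport model $\cX'$ is \emph{already} regular (this happens for suitable $M$, e.g.\ when some prime $q\equiv -1\pmod{12}$ divides $M$). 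In that case $u_\ell,v_\ell$ extend to finite morphisms between regular two-dimensional schemes, hence are automatically flat, and the functoriality propositions apply directly on the special fibre. For general $M$, the paper then chooses a multiple $N'=nN$ with $(n,p)=1$ such that the Deligne--Rapoport model of $X_0(N')$ is regular, and descends along the equivariant surjection ${}^tf^*\colon \ch{J_0(N')_{\fm'}}\twoheadrightarrow \ch{J_0(N)_\fm}$, which on the combinatorial side is simply the map $\ZZ[\SPS_{M'}]\to\ZZ[\SPSM]$ induced by $\SPS_{M'}\to\SPSM$.

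A minor point: your invocation of Proposition \ref{prop:tori-funct} for the action of ${}^tT_\ell$ on $\ch{J_\fm}$ is misplaced---that proposition only describes the torus $T_\fm$ and its character group, not the full $\ch{J_\fm}$. The relevant inputs are Propositions \ref{prop:upper-star} and \ref{prop:lower-star} applied to the special-fibre maps once you have them.
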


(The second isomorphism is of course well known: see
\cite[Prop. 3.1]{Rib90}.)

\begin{proof}
  We work over $S$, the strict henselisation of $\Spec \ZZ_{(p)}$, and
  use the notations from \S2, so that $k=\FFpbar$. Let $\cX'$ denote
  the Deligne-Rapoport model of $X_0(N)$ over $S$. Since $p$ exactly
  divides $N$, $\cX'$ is regular apart from possible $A_2$ or $A_3$
  singularities at supersingular points in the special fibre where
  $j=0$ or $1728$. Let $\cX \to \cX'$ be its minimal
  desingularisation. The special fibre $\cX'_s$ is the union of two
  copies of the modular curve $X_0(M)_{\FFpbar}$ meeting
  transversally at the supersingular points. The cusp $\infty$
  (resp.~$0$) meets the component of $\cX'_s$ parametrizing $(E,C)$
  where $C$ contains the kernel of Frobenius (resp.~Verschiebung). Let
  us refer to these as the  $\infty$-component $Z_\infty$ and
  $0$-component $Z_0$ of $\cX_s$.
  
  First we assume that $\cX'=\cX$ is regular (which holds, for
  example, if $M$ is divisible by some prime $q\equiv -1$~(mod~$12$)
  or by $36$ --- see the second table in \cite[4.1.1]{Edi91}). Since
  $\cX_s$ has an irreducible component of multiplicity one,  the
  hypotheses (H1--3) of Section \ref{sec:raynaud} are satisfied. Let
  $\kModulus \to \cX$ be the morphism induced by $\fm$, so that
  $\kModulus$ is the disjoint union of two sections of $\cX$ over $S$.
  Then\mpa
  \begin{equation}
    \label{eq:modular-raynaud}
    J_{\fm/\FFpbar}^0 \simeq \Pic^0_{(\cX_s,\kModulus_s)/k}
  \end{equation}
  and since $\Sigma_s\subset \cX^{\reg}_s$, by \eqref{eq:pizza1} we have 
  \begin{equation}
    \label{eq:modular-char1}
    \ch{J_\fm} = \ker\Bigl[
    \ZZ[\widetilde{\SPS}]\oplus \ZZ[\Sigma_s]
    \xrightarrow{\left[\begin{smallmatrix}\psi&\theta\\ 
\phi&0\end{smallmatrix}\right]}
    \ZZ[C] \oplus \ZZ[\SPS] \Bigr]
  \end{equation}
  where $C=\pi_0(\widetilde{\cX_s})$, $\SPS=\SPSM\simeq \cX_s^{\sing}$
  and $\widetilde{\SPS}$ is the inverse image of $\SPS$ in the
  normalisation $\widetilde{\cX_s}$ of $\cX_s$.
  
  The map $\theta\colon \ZZ[\Sigma_s] \to \ZZ[C]$ is a bijection
  since the cusps meets different components, and $\cX_s$ has only
  ordinary double points, so the vertical maps between the three $2$-term complexes
\[
    \begin{tikzcd}
      \ZZ[\widetilde{\SPS}]\oplus \ZZ[\kModulus_s] \arrow[r]  \arrow[d] 
      & \ZZ[C] \oplus \ZZ[\SPS] \arrow[d] \\
      \ZZ[\widetilde{\SPS}] \arrow[r] & \ZZ[\SPS] \\
      \ZZ[\SPS] \arrow[r] \arrow[u, "i"] & 0 \arrow[u]
    \end{tikzcd}
  \]
  are quasi-isomorphisms. Here $i$ is the map taking $x\in \SPS$ to
  $x^{(\infty)}-x^{(0)}$, with $x^{(\infty)}$, $x^{(0)}\in \cX_s$
  being the supersingular points above $x$ lying in the components
  containing $\infty$, $0$ respectively.  These quasi-isomorphisms
  then induce the isomorphism $\ch{J_\fm}\simeq \ZZ[\SPS]$. To get
  $\ch{J}$ we drop the factor $\ZZ[\kModulus_s]$ from
  \eqref{eq:modular-char1}, and then the kernel becomes $\ZZ[\SPS]\zero$.

  Still assuming that $\cX'$ is regular, let $\ell\ne p$ be
  prime. Then the Deligne-Rapoport model for $X_0(N\ell)$ over $S$ is
  also regular. Let us denote it $\cX^{(\ell)}$. Then maps $u,v$
  extend to finite morphisms $\cX^{(\ell)} \to \cX$ which are
  therefore also flat. Therefore the endomorphism $T_\ell$ of
  $\cJ_{\fm,s}^0$ is, under the isomorphism
  \eqref{eq:modular-raynaud}, identified with the endomorphism
  $T_\ell=u_*v^*$ of $\Pic^0_{(\cX_s,\kModulus_s)/k}$. Now the maps
  $u$, $v \colon \cX^{(\ell)}_s \to \cX_s$ map the $\infty$- and
  $0$-component of $\cX^{(\ell)}_s$ to the $\infty$- and
  $0$-component, respectively, of $\cX_s$, and on each of these, they
  are just the maps $u$, $v\colon X_0(M\ell) \to X_0(M)$. So $u_*v^*$
  induces the map $T_\ell$ on $\ZZ[\SPSM]$.

  In general, choose a multiple $N'=nN=pM'$ of $N$ with $(p,n)=1$ such
  that the Deligne-Rapoport model of $X_0(N')$ over $S$ is
  regular. Let $f\colon X_0(N') \to X_0(N)$ be the map
  $(E,C)\mapsto (E,nC)$, and $\fm'$ the reduced modulus
  $f^{-1}((\infty)+(0))^{\red}$ on $X_0(N')$. Then
  $f^*\colon J_0(N)_{\fm} \to J_0(N')_{\fm'}$ induces a surjection
  \[
    \begin{tikzcd}
      \llap{${}^tf^* \colon$} \ch{J_0(N')_{\fm'}} \arrow[r] & \ch{J_0(N)_\fm} \\
      \ZZ[\SPSMp] \arrow[u, "\wr"] & \ZZ[\SPSM] \arrow[u, "\wr"] 
    \end{tikzcd}
  \]
  which is equivariant with respect to ${}^tT_\ell$ for all
  $\ell\notdivides N'$. According to \ref{prop:upper-star} this is
  induced by the map $f\colon \SPSMp \to \SPSM$, hence commutes
  with the maps $T_\ell$ on $\ZZ[\SPSMp]$ and $\ZZ[\SPSM]$.
\end{proof}
  
\begin{rem}
  Restricting to the case when $p$ exactly divides $N$ is rather
  natural, since the toric part of the special fibre of the N\'eron
  model of $J_0(p^rM)$, $r>1$, is a product of copies of the toric
  part for $J_0(pM)$.
\end{rem}

In the case $N=p$ we may describe everything (including $T_p$) in
terms of the classical Brandt matrices, whose definition we now recall
\cite{Gros87}.  Let $\{ E_i \mid 1\le i\le h\}$ be representatives of the
isomorphism classes of supersingular elliptic curves over
$\FFpbar$ (so that $h$ is the class number of the definite quaternion
algebra $\mathrm{End}(E_i)\otimes\QQ$). Let $\Hom(E_i,E_j)_n$ be the set of isogenies from $E_i$ to
$E_j$ of degree $n$.  Define an equivalence relation $\sim$ on
$\Hom(E_i,E_j)_n$ by
\[
f\sim g \Longleftrightarrow \ker f=\ker g\Longleftrightarrow
f=\alpha g\text{ for an automorphism $\alpha$ of $E_j$}
\]
and set $\overline{\Hom}(E_i,E_j)_n=\Hom(E_i,E_j)/\sim$.
We then define the $h\times h$  Brandt matrix $B(n)$ for $n\geq 1$ by
\begin{equation}
\label{eq:brandt}
B(n)_{ij}=\#\overline{\Hom}(E_i, E_j)_n.
\end{equation}
The matrices $B(n)$ for $n\ge 1$ commute. They are constant row-sum
matrices, with the sum of the entries in any row of $B(n)$ equal to
\[
  \sigma'(n)=\smash[b]{\sum_{d\divides n,\ (p,d)=1} d}
\]
for $n\geq 1$.

\begin{thm}
  \label{thm:X0p-char}
  Let $N=p$ and $\fm=(\infty)+(0)$. The isomorphism
  $\ch{J_\fm}\isom \ZZ[\SPS_1]$ of Theorem
  \textup{\ref{thm:characters-supersingular}} takes  ${}^tT_\ell$  to the
  transpose ${}^tB(\ell)$ of the Brandt matrix, for every prime $\ell$
  \textup(including $\ell=p$\textup).
\end{thm}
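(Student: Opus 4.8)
My plan is to treat the cases $\ell\ne p$ and $\ell=p$ separately, as they require quite different arguments. For $\ell\ne p$ the assertion is essentially already in Theorem~\ref{thm:characters-supersingular}: that result identifies ${}^tT_\ell$ on $\ch{J_\fm}$ with the Hecke operator $T_\ell$ on $\ZZ[\SPS_1]$, which for $M=1$ sends $(E)$ to $\sum_D(E/D)$ over the order-$\ell$ subgroups $D\subset E$. Writing $e_i=[E_i]$, the coefficient of $e_j$ in $T_\ell e_i$ is the number of $D\subset E_i$ with $E_i/D\simeq E_j$, which by definition is $B(\ell)_{ij}=\#\overline{\Hom}(E_i,E_j)_\ell$; hence $T_\ell e_i=\sum_j B(\ell)_{ij}e_j$, i.e.\ $T_\ell={}^tB(\ell)$ as an operator on $\ZZ[\SPS_1]$, and the claim for $\ell\ne p$ follows.

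For $\ell=p$ the flatness argument used above is unavailable, since the Deligne--Rapoport model of $X_0(p^2)$ over $S$ is not regular and $u_p$, $v_p$ need not be flat on the semistable models. Instead I would use the exact sequence of character groups coming from Theorem~\ref{thm:main}(c) (and the proof of Theorem~\ref{thm:characters-supersingular}),
\[
0 \to \ch{J} \to \ch{J_\fm} \xrightarrow{\ \deg\ } \ch{T_\fm} \to 0,
\]
which is $0\to\ZZ[\SPS_1]\zero\to\ZZ[\SPS_1]\xrightarrow{\deg}\ZZ\to0$, with $\ch{T_\fm}\cong\ZZ\cdot D$, $D=(0)-(\infty)$. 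By Example~\ref{ex:hecke-calcs}(\ref{ex:hecke-calcs-p}), ${}^tT_p$ fixes $D$, hence acts as the identity on the quotient $\ZZ$; and since $B(p)$ has constant row sum $\sigma'(p)=1$, the operator ${}^tB(p)$ likewise preserves the degree and acts as the identity on $\ZZ$. Therefore $\delta:={}^tT_p-{}^tB(p)$ maps $\ZZ[\SPS_1]$ into $\ZZ[\SPS_1]\zero$, and it commutes with ${}^tT_\ell={}^tB(\ell)$ for every $\ell\ne p$ because Hecke operators commute.

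The next step is to reduce to a statement about $J_0(p)$ alone. Suppose it is known that $\delta$ vanishes on $\ch{J}=\ZZ[\SPS_1]\zero$. Then $\delta$ factors through $\deg$ as multiplication by a fixed vector $x=\delta(e)\in\ZZ[\SPS_1]\zero$, for any degree-one divisor $e$; applying the relation $\delta\,{}^tB(\ell)={}^tB(\ell)\,\delta$ to $e$ and using that ${}^tB(\ell)$ scales degrees by $\ell+1$ gives ${}^tB(\ell)x=(\ell+1)x$. But for any single $\ell\ne p$ the value $\ell+1$ is the Perron eigenvalue of the connected supersingular $\ell$-isogeny graph, hence a simple eigenvalue of $B(\ell)$ and of ${}^tB(\ell)$, whose eigenspace is spanned by the mass vector $x_0=\sum_i(\#\Aut E_i)^{-1}e_i$, which has nonzero degree by Eichler's mass formula. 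Since $x\in\ZZ[\SPS_1]\zero$ this forces $x=0$, so $\delta=0$ and the theorem holds for $\ell=p$.

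It then remains to show that ${}^tT_p$ and ${}^tB(p)$ agree on $\ch{J}=\ZZ[\SPS_1]\zero$; equivalently, that $U_p=T_p$ acts on the character group of the toric part of $J_0(p)$ in characteristic $p$ by the Brandt matrix $B(p)$ — which, as $\sigma'(p)=1$, is the permutation matrix of the action $j\mapsto j^p$ of Frobenius (equivalently, of the Atkin--Lehner involution $W_p$) on supersingular points. This is the compatibility at the ramified prime of the Eichler correspondence, due to Deligne--Rapoport and Mazur; see \cite[Ch.~II]{Maz77} and \cite[\S3]{Rib90}. I expect this to be the main obstacle, precisely because for $\ell=p$ one cannot bootstrap from the $\ell\ne p$ case: the natural auxiliary curve $X_0(p^2)$ has bad reduction at $p$, so genuine input on its characteristic-$p$ geometry is needed. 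A self-contained route would be to note that, after passing if necessary to an auxiliary level (as in the proof of Theorem~\ref{thm:characters-supersingular}) so that the relevant models are regular, the normalisation of the regular model of $X_0(p)$ in $X_0(p^2)$ is a normal --- hence Cohen--Macaulay --- surface, so the finite map to the regular model of $X_0(p)$ is flat; one could then apply Propositions~\ref{prop:upper-star} and~\ref{prop:lower-star} on special fibres, using the Katz--Mazur/Edixhoven description of $X_0(p^2)$ modulo $p$ and the cusp formulae of \S\ref{sec:mod-gen}.
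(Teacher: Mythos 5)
For $\ell\ne p$ your argument is the same as the paper's: the identification of Theorem \ref{thm:characters-supersingular} plus the definition \eqref{eq:brandt} of $B(\ell)$ gives the claim immediately. For $\ell=p$ your route is genuinely different. The paper proves the identity on all of $\ch{J_\fm}$ at once: it shows $T_p+W_p=0$ on the generalized Jacobian $J_0(p)_\fm$ (Ribet for the abelian quotient, the computation ${}^tT_p(D)=D$ together with $W_p(0)=\infty$ for the torus, and $\Hom(J_0(p),\Gm)=0$ to glue), and then computes the involution $W_p$ directly on $H_1(\Gamma_{\cX'_s,\Sigma},\ZZ)$, identifying $-W_p$ with ${}^tB(p)$. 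You instead set $\delta={}^tT_p-{}^tB(p)$, observe that $\delta$ lands in $\ZZ[\SPS_1]\zero$ and commutes with ${}^tB(\ell)$, and use simplicity of the Perron eigenvalue $\ell+1$ of ${}^tB(\ell)$ (connectivity of the $\ell$-isogeny graph) plus the nonvanishing of the Eichler mass to show that $\delta=0$ once it vanishes on $\ch{J}=\ZZ[\SPS_1]\zero$. That reduction is correct and is a nice alternative to the paper's explicit graph computation of the extension class; what it buys is that you never have to compute anything on the extra edges of $\Gamma_{\cX'_s,\Sigma}$ coming from the cusps. What it costs is that the whole content is pushed into the statement that ${}^tT_p={}^tB(p)$ on $\ch{J}$, i.e.\ that $U_p$ acts on the character group of $J_0(p)$ at $p$ by Frobenius; citing Ribet \S3 for this is legitimate (and comparable in weight to the paper's own appeal to \cite[Proposition 3.7]{Rib90}), but it means your proof establishes strictly less by itself than the paper's.

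The one concretely flawed step is your proposed ``self-contained route'' for that remaining input. Flatness of $u_p,v_p\colon \cX^{(p)}\to\cX$ is not the issue; the obstruction is that Propositions \ref{prop:upper-star} and \ref{prop:lower-star} require Hypothesis \ref{hyp:funct-simp}\eqref{hyp:funct-simp1}, namely that $f$ be \'etale at every point over the singular locus $A$ of the special fibre. For the degree-$p$ maps $u_p$, $v_p$ in characteristic $p$ this fails badly at the supersingular points (the maps are inseparable on one component and totally ramified there), so the combinatorial formulae of \S\ref{sec:functoriality1} simply do not apply to $T_p$. This is exactly why the paper trades $T_p$ for the automorphism $W_p$, which is \'etale everywhere and so can be read off the graph. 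As written, then, your argument is a correct reduction of the $\ell=p$ case to a known theorem of Ribet, but the sketched replacement for that theorem does not work and should be deleted or replaced by the paper's $W_p$ computation.
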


\begin{proof}
  For $\ell\ne p$ this follows immediately from the definition of the
  Brandt matrix $B(\ell)$. For $\ell=p$, we first note that the
  endomorphism $T_p+W_p$ of $J_0(p)_\fm$ is zero. Indeed, on the
  quotient $J_0(p)$ it is zero, by \cite[Proposition 3.7]{Rib90}, and
  since $W_p$ interchanges the two cusps and ${}^tT_p$ fixes
  $(0)-(\infty)$, it is zero on the torus
  $T_\fm=\Gm\subset J_0(p)_\fm$. So as any morphism from $J_0(p)$ to
  $\Gm$ is constant, $T_p+W_p$ is zero on $J_0(p)_\fm$. Therefore it
  is enough to compute the action of $W_p$ in $\ch{J_0(p)_\fm}$. For
  this it is convenient to compute using the extended reduced graph
  $\Gamma_{\cX'_s,\Sigma}$ defined in \S\ref{sec:singular2}, with
  $\Sigma=X_0(p)^\infty_s=\{\infty,0\}$ (where we have fixed an
  orientation):
  \begin{center}
    \tikzset{middlearrow/.style={
        decoration={markings,
          mark= at position 0.5 with {\arrow{#1}} ,
        },
        postaction={decorate}
      }
    }
    \scalebox{1.2}{
      \begin{tikzpicture}
        \draw [black, thick, middlearrow={>}] (1.6,2.2) -- (0,0);
        \draw [black, thick, middlearrow={>}] (1.6,2.2) -- (3.2,0);
        \draw[middlearrow={>}] (0,0) .. controls (.8, 1.1) and (2.4, 1.1) .. (3.2,0);
        \draw[middlearrow={>}] (0,0) .. controls (.8, .6) and (2.4,.6) .. (3.2,0);
        \draw[middlearrow={>}] (0,0) .. controls (.8, -.6) and (2.4, -.6) .. (3.2,0);
        \draw[middlearrow={>}] (0,0) .. controls (.8, -1.1) and (2.4, -1.1) .. (3.2,0);
        \node [left] at (0,0) {\footnotesize $Z_0$};
        \node[right] at (3.2,0) {\footnotesize $Z_\infty$};
        \node[above] at (1.6,2.2) {\footnotesize $v_0$};
        \fill[black] (1.6,2.2) circle (.08cm);
        \fill[black] (0,0) circle (.08cm);
        \fill[black] (3.2,0) circle (.08cm);
        \node[left,black] at (.83, 1.24) {\tiny $0$};
        \node[right,black] at (2.4, 1.24) {\tiny $\infty$};
        \node[above] at (1.6, .73) {\tiny $E_1$};
        \node[above] at (1.6, .37) {\tiny $E_2$};
        \node[below] at (1.6, -.78) {\tiny $E_h$};
        \node[thick] at (1.6, .1) {\vdots};
      \end{tikzpicture}
    }
  \end{center}
  As the regular model $\cX$ is obtained by replacing the $A_2$- and
  $A_3$-singularities by chains of lines, the extended graphs of
  $\cX_s$ and $\cX'_s$ are homotopy equivalent, and so we may restrict
  to $\cX'_s$. On the special fibre
  $\cX_s'$, $W_p$ interchanges the two irreducible components. Recall
  also that the supersingular points $\SPS_1$ are $\mathbb{F}_{p^2}$-rational,
  and if $x\in\SPS_1$ is a supersingular point, corresponding to the
  class of a supersingular elliptic curve $E/\FFpbar$, then
  $W_p(x)=x^{(p)}$ is the point corresponding to
  $E^{(p)}=E/\ker(F)$. So the automorphism $W_p$ extends to to an
  automorphism of the graph, fixing $v_0$ and interchanging $Z_0$ and
  $Z_\infty$, and mapping the edges labelled $E_i$ to $E_i^{(p)}$. The
  homology $H_1(\Gamma_{\cX'_s,\Sigma},\ZZ)$ is freely generated
  by the cycles $\gamma_i=(0)+(E_i)-(\infty)$, and $W_p\colon \gamma_i
  \mapsto -\gamma_i^{(p)} = -(0)-(E_i^{(p)})+(\infty)$. Now the only
  element of $\overline{\Hom}(E_i,E_j)_p$ is the Frobenius $E_i\to
  E_i^{(p)}=E_j$, and therefore the matrix of $T_p=-W_p$ equals ${}^tB(p)$. 
\end{proof}
  
\subsection{Component groups}

Throughout this section, we assume that $p>3$. Let $N=p^rM$, with
$(p,M)=1$ and $r\ge1$. As in the previous section, work over $S$, the
strict henselisation of $\Spec \ZZ_{(p)}$. Let $\fm$ be a reduced
modulus on $X_0(N)$ supported at the cusps, and $\mathbb{T}$ a
subalgebra of the Hecke algebra $\ZZ[\{T_\ell\}]$ which preserves the
support of $\fm$. Let $J_\fm$ be the generalized Jacobian of $X_0(N)$
for the modulus $\fm$. Then $\mathbb{T}$ acts on $J_\fm$, stabilising
the torus $T_\fm$. It therefore acts on the extension of component
groups
\[
  0 \to \comp{T_\fm} \to \comp{J_\fm} \to \comp{J} \to 0
\]
and the action commutes with the action of $\Gal(\FFpbar/\FFp)$.
For the action of $\mathbb{T}$ on $\comp{J}$ we have the following
result, proved by Edixhoven \cite{Edi91}, generalizing Ribet
\cite{Rib88} who treated the case of $N$ squarefree.

\begin{thm}
  For every $\ell\notdivides N$, $T_\ell$ acts on $\comp{J}$ as
  multiplication by $\ell+1$.
\end{thm}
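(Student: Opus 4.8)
The $T_\ell$-action on $\comp{J}$ is the one induced, via the N\'eron universal property, by the endomorphism $T_\ell=v_*u^*$ of $J=J_0(N)$; it commutes with the $\Gal(\FFpbar/\FFp)$-action. The plan is to establish the assertion first at levels where the Deligne--Rapoport model is already regular --- where Raynaud's combinatorial description makes the $T_\ell$-action visibly a scalar --- and then to reduce the general case to that one by raising the level.

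\emph{The regular case.} Suppose the Deligne--Rapoport models $\cX$, $\cX^{(\ell)}$ of $X_0(N)$, $X_0(N\ell)$ over $S=$ the strict henselisation of $\Spec\ZZ_{(p)}$ are regular (this happens, e.g., when $M=N/p^{v_p(N)}$ is divisible by a prime $\equiv -1\pmod{12}$ or by $36$). Then hypotheses (H1--3) of \S\ref{sec:raynaud} hold for $\cX$, and the finite flat maps $u=u_\ell$, $v=v_\ell\colon\cX^{(\ell)}\to\cX$ induce $T_\ell=v_*u^*$ on $P=\Pic_{\cX/S}$, respecting $P'=\ker(\deg)$ and the closure $E$ of the zero section; hence $T_\ell$ acts on $\comp{J}=\ker(b)/\im(a)$ in the notation of \eqref{eq:raynaud}, which is a subquotient of $P_s/P_s^0\simeq\ZZ^C$. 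I claim $T_\ell$ acts on $\ZZ^C$ as multiplication by $\ell+1$. By the Deligne--Rapoport/Katz--Mazur description of the fibre at $p$ \cites{DeRa72,KM85}, the irreducible components of $\cX_s$ and of $\cX^{(\ell)}_s$ are indexed by the same combinatorial data --- the type of the $p$-part of the $\Gamma_0$-structure --- and $u$, $v$ induce the \emph{same} bijection on components: $u\colon(E,C)\mapsto(E,\ell C)$ leaves the $p$-part of $C$ unchanged, and $v\colon(E,C)\mapsto(E/NC,C/NC)$ divides by the $\ell$-primary part of $C$, an \'etale $\ell$-isogeny. As $\ell\ne p$, both $u$ and $v$ are \'etale at the generic point of every component of $\cX^{(\ell)}_s$ (ramification being concentrated at supersingular points and cusps); so for matching components $Z_j\subset\cX^{(\ell)}_s$ and $Y_j\subset\cX_s$ we get $u^*Y_j=v^*Y_j=Z_j$ with multiplicity one, whence $\deg(u|_{Z_j})=\deg(v|_{Z_j})=\ell+1$. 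For a line bundle $\cL$ on $\cX_s$, base change for the norm then gives $(v_*u^*\cL)|_{Y_j}=\norm_{v|_{Z_j}}\!\bigl((u|_{Z_j})^{*}(\cL|_{Y_j})\bigr)$, of degree $(\ell+1)\deg(\cL|_{Y_j})$. Thus $T_\ell=(\ell+1)\cdot\id$ on $P_s/P_s^0=\ZZ^C$, and hence also on the subquotient $\comp{J}$.

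\emph{The general case, and the main obstacle.} For general $N$ the Deligne--Rapoport model picks up $A_1$- or $A_2$-singularities at the supersingular points with $j=0$ or $1728$ (recall $p>3$), and passing to the minimal resolution introduces exceptional chains of $\mathbb{P}^1$'s over which $u^*Y$ is no longer a single reduced component; this is the crux of the difficulty. I would circumvent it by the level-raising device used in the proof of Theorem \ref{thm:characters-supersingular}: choose $N'=nN$ with $(n,pN\ell)=1$, $v_p(N')=v_p(N)$, the model of $X_0(N')$ regular, and $\deg f$ coprime to the (finite) cardinality $\#\comp{J_0(N)}$, where $f\colon X_0(N')\to X_0(N)$ is $(E,C)\mapsto(E,nC)$. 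Then $f^*$, $f_*$ are $T_\ell$-equivariant with $f_*f^*=\deg(f)$, so the regular case applied to $X_0(N')$ gives $\deg(f)\cdot(T_\ell-(\ell+1))=0$ on $\comp{J_0(N)}$, whence $T_\ell=\ell+1$. The delicate point is to meet the constraint ``$X_0(N')$ has a regular model'' simultaneously with ``$\deg f$ prime to $\#\comp{J_0(N)}$'' --- or, alternatively, to analyse the Hecke action directly on the exceptional components of the resolution. This is precisely what is carried out in Edixhoven \cite{Edi91} (generalizing Ribet \cite{Rib88} for $N$ squarefree), to which I refer for the remaining details.
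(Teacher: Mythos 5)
The first thing to note is that the paper does not prove this statement at all: it is quoted from Edixhoven \cite{Edi91} (generalizing Ribet \cite{Rib88} for squarefree $N$), so there is no internal proof to compare yours against. Your proposal is therefore more ambitious than the paper itself, and its first half is essentially sound: when the Deligne--Rapoport models at levels $N$ and $N\ell$ are both regular, the observation that $u$ and $v$ induce the \emph{same} bijection on irreducible components of the special fibres (an $\ell$-isogeny with $\ell\ne p$ does not change the $p$-type of the level structure), each component mapping with degree $\ell+1$ and multiplicity one, does show that $v_*u^*$ is multiplication by $\ell+1$ on $P_s/P_s^0\simeq\ZZ^C$ and hence on the subquotient $\comp{J}=\ker(b)/\im(a)$. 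This parallels the technique the paper uses in the proof of Theorem \ref{thm:characters-supersingular}.

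The genuine gap is in your reduction of the general case. You need an auxiliary level $N'=nN$ for which simultaneously (i) the model of $X_0(N')$ at $p$ is regular and (ii) $\deg f$ is prime to $\#\comp{J_0(N)}$, and you give no argument that such an $n$ exists. It is far from clear that it does: regularity at $p$ amounts to the vanishing of $\e2$ and $\e3$ for $X_0(nM)$, and the standard ways of forcing this (a prime $q\equiv-1\pmod{12}$ dividing $n$, or $36\divides n$) make $\deg f$ divisible by $12$, while $\#\comp{J}$ is frequently divisible by $2$ or $3$; whether some other choice of $n$ always works would need a case analysis you have not supplied. (Even granting such an $n$, you should also record why $\Phi(f_*)\Phi(f^*)=\deg f$ on component groups and why $\Phi(f^*)$, $\Phi(f_*)$ commute with $T_\ell$ --- both true via the N\'eron property, but not free.) Since you ultimately fall back on citing Edixhoven for ``the remaining details'', your argument in the general case collapses to the same citation the paper makes; note that Ribet's and Edixhoven's actual proofs are not of this level-raising form but analyse the Hecke action on the component group directly.
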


\begin{cor}
  Assume that $M$ is squarefree and $p>3$. Then for every
  $\ell\notdivides N$, $T_\ell$ acts on $\comp{J_\fm}$ as multiplication
  by $\ell+1$.
\end{cor}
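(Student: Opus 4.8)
The plan is to deduce the Corollary from the preceding Theorem together with a direct computation of the $T_\ell$-action on the torus part. Work over $S$, the strict henselisation of $\Spec\ZZ_{(p)}$. We have the $\mathbb T$-equivariant exact sequence $0\to\comp{T_\fm}\to\comp{J_\fm}\to\comp{J}\to 0$ noted above, and by Proposition \ref{prop:tori-neron}(b) a canonical isomorphism $\comp{T_\fm}\cong\ZZ^I/e\ZZ$, where $I$ indexes the cusps of $X_0(N)_{\Fsh}$ in $\supp\fm$ and $e=(e_d)$ records their ramification indices over $\Rsh$. The Theorem gives $T_\ell=\ell+1$ on $\comp{J}$ for $\ell\notdivides N$, so it suffices to prove the same for $\comp{T_\fm}$ and then to control the extension.

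The heart of the matter is the computation on $\comp{T_\fm}$. I would apply Proposition \ref{prop:tori-funct}(c) to the two finite maps $u=u_\ell$, $v=v_\ell\colon X_0(N\ell)\to X_0(N)$ defining $T_\ell=v_*u^*$, feeding in the explicit description of $u$, $v$ on cusps from \S\ref{sec:mod-gen}. For $\ell\notdivides N$ and a cusp $z_d$ of $X_0(N)$ (with $d\divides N$), both $u^{-1}(z_d)$ and $v^{-1}(z_d)$ equal the pair of cusps $\{z_d,z_{d\ell}\}$ of $X_0(N\ell)$; all three have the same residue field (since $(d,N/d)=(d,N\ell/d)=(d\ell,N\ell/(d\ell))$ because $\ell\notdivides N$), hence the same ramification index $e_d$ over $\Rsh$; and the ramification degree of $u$ is $1$ at $z_d$ and $\ell$ at $z_{d\ell}$, whereas for $v$ it is $\ell$ at $z_d$ and $1$ at $z_{d\ell}$. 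Substituting into Proposition \ref{prop:tori-funct}(c), the ratios $e'_j/e_{f(j)}$ are all $1$, so $\Phi(u^*)$ is the diagonal duplication $(n_d)_d\mapsto(n_d,n_d)_d$ and $\Phi(v_*)$ is $(m_d,m_{d\ell})_d\mapsto(\ell m_d+m_{d\ell})_d$; composing, $\Phi(T_\ell)$ is multiplication by $\ell+1$ on $\comp{T_\fm}$.

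It remains to pass from the two ends to the middle. Since $T_\ell-(\ell+1)$ annihilates the subgroup $\comp{T_\fm}$ and induces $0$ on the finite quotient $\comp{J}$, it factors through a homomorphism $\comp{J}\to\comp{T_\fm}$ out of a finite group, whose image lies in the torsion subgroup of $\ZZ^I/e\ZZ$; this torsion subgroup vanishes as soon as some $e_d=1$ (e.g. when $0$ or $\infty$ lies in $\supp\fm$), and then $T_\ell=\ell+1$ on $\comp{J_\fm}$. I expect this last deduction to be the main obstacle: to cover the general $\fm$ one should instead argue with the complex \eqref{eq:raynaud2-tilde}, showing $T_\ell=v_*u^*$ acts as $\ell+1$ on its whole middle term $\ZZ^{\Ctilde}\oplus\ZZ^{\Itilde}/e\ZZ$ — on the first summand $P_s/P_s^0$ via Edixhoven's argument, which after replacing $N$ by a multiple $N'=nN$ with $(n,p)=1$ so that $X_0(N')$ and $X_0(N'\ell)$ have regular Deligne--Rapoport models (as in the proof of Theorem \ref{thm:characters-supersingular}) uses that $u$, $v$ then become finite flat of degree $\ell+1$ — and hence on the homology $\comp{J_\fm}$.
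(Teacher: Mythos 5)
Your overall strategy is the paper's: compute the action on $\comp{T_\fm}$, invoke Edixhoven's theorem for $\comp{J}$, and observe that $T_\ell-(\ell+1)$ kills both ends of the extension $0\to\comp{T_\fm}\to\comp{J_\fm}\to\comp{J}\to0$ and hence must vanish. The ramification data for $u_\ell$, $v_\ell$ at the cusps and the substitution into Proposition \ref{prop:tori-funct}(c) are correct as far as they go. But there is a genuine gap: nowhere do you use the hypothesis that $M$ is squarefree, and your torus computation silently assumes a consequence of it. Proposition \ref{prop:tori-funct}(c) is a statement over $\Rsh$, so the index set $I$ must be the set of cusps of $X_0(N)_{\Fsh}$, not the set of divisors $d\divides N$; your identification of the two is valid only if each closed cusp $z_d\simeq\Spec\QQ(\mmu_{(d,N/d)})$ stays irreducible after base change to $\Fsh$, i.e.\ only if $\QQ(\mmu_{(d,N/d)})$ is totally ramified at $p$, i.e.\ only if $(d,N/d)$ is a power of $p$ --- which is exactly what ``$M$ squarefree'' guarantees. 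Without this, a cusp splits into several $\Fsh$-points and $u$, $v$ induce \emph{different} maps $I'\to I$: by the formulae of \S\ref{sec:mod-gen}, $u$ sends the point attached to $\zeta_{N}$ to the one attached to $\zeta_N^a$ while $v$ sends it to the one attached to $\zeta_N$, so $\Phi(v_*)\circ\Phi(u^*)$ is $\ell\cdot(\text{a permutation})+(\text{another permutation})$ of the points in a Galois orbit, not the scalar $\ell+1$. The conclusion genuinely fails for general $N$ (this is why the Remark following the Corollary must pass to the $p$-split quotient $T_\pspl$). The missing line --- squarefreeness forces trivial Galois action on $\Itilde$, via \eqref{eq:pi0-of-tori} --- is precisely the pivot of the paper's own proof.

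Your caution about the final step is, by contrast, well placed: $\ZZ^I/e\ZZ$ has torsion $\ZZ/\gcd(e_i)$, so the factored map $\comp{J}\to\comp{T_\fm}$ is only obviously zero when $\gcd(e_i)=1$ (automatic for $r=1$, but not for every cuspidal $\fm$ when $r\ge2$), whereas the paper simply asserts that $\comp{T_\fm}$ is free. Your proposed repair via the middle term of \eqref{eq:raynaud2-tilde} is only a sketch, however: the quoted theorem gives $T_\ell=\ell+1$ on $\comp{J}$, not on $\ZZ^{\Ctilde}$ itself, so that stronger statement would need its own argument.
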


\begin{proof}
  Let $x\simeq\Spec\QQ(\mmu_{(d,N/d)})\subset X_0(N)^\infty$ be a cusp,
  where $d\divides N$. As $M$ is squarefree, $(d,N/d)$ is a power of
  $p$. Therefore $\Gal(\overline{\QQ}/\QQ)$ acts trivially on
  $\comp{T_\fm}$ by \eqref{eq:pi0-of-tori}. So $T_\ell$ acts on
  $\comp{T_\fm}$ as multiplication by $\ell+1$. So the endomorphism
  $T_\ell-\ell-1$ of $\comp{J_\fm}$ factors through a map
  $\comp{J}\to\comp{T_\fm}$, which is zero as $\comp{J}$ is finite and
  $\comp{T_\fm}$ is free. 
\end{proof}

\begin{rem}
  Similarly, let $N$ be arbitrary, and $\fm$ the reduced modulus on
  $X_0(N)$ which is the sum of all the cusps. Write
  $T_\fm \to T_\pspl$ for the maximal quotient which is split over
  $\QQ(\mmu_{p^r})$. Let $J_\pspl$ be the corresponding quotient of
  $J_\fm$. Then by Corollary \ref{cor:tori-exact} the sequence of
  N\'eron models
  \[
    0\to \cT_\pspl \to \cJ_\pspl  \to \cJ \to 0
  \]
  is exact, and the same argument shows that $T_\ell=\ell+1$ on
  $\comp{\cJ_\pspl}$.
\end{rem}

Now we turn to the abelian group structure of $\comp{J_\fm}$.

For $N=pM$, $(p,M)=1$, the structure of $\comp{J}$ was determined
completely by Deligne, and described by Mazur and Rapoport in
\cite{MR77}, using the description of the regular model of $X_0(N)$
given in \cite{DeRa72} --- see Table 2 on p.~174 and the calculations of \S2
in \emph{loc.~cit.}, and the corrections to their calculations made by
Edixhoven \cite[4.4.1]{Edi91}. We recall these formulae in
\ref{cor:mazur-rapoport-formula} below.

For general $N$, the minimal desingularisation  $\cX \to \cX'$ was computed
by Edixhoven \cite{Edi90} using the description of $\cX'$ in
\cite{KM85}. Since the component of $\cX_s'$ meeting the cusp $\infty$
has multiplicity one, $\cX$ satisfies hypotheses (H1--3).
From this it is in principle an exercise to compute
$\comp{J}$ in any given case, and in \cite[4.4.2]{Edi91} this is done for
$N=p^2$.

We will compute $\comp{J_\fm}$ in various cases. First some notation:
as in the previous section, let $\SPSM\subset X_0(M)(\FFpbar)$ be the set
of supersingular points, and $n=\#\SPSM$.  For $j\in\{2,3\}$ let $\e j$
be the number of elements $(E,C)\in \SPSM$ for which
$\#\mathrm{Aut}(E,C)=2j$. 

\subsubsection{$X_0(pM)$ with $(p, M)=1$ and $\fm=(\infty)+(0)$}
\label{sec:X0pM}

\begin{thm}
\label{thm:X0pM}
  Let $N=pM$ with $(p,M)=1$. Let $J_\fm$ be the
  generalized Jacobian of $X_0(N)$ with respect to the modulus
  $\fm=(\infty)+(0)$. Then:
  \begin{enumerate}[\upshape (a)]
\item
\label{thm:X0pM1}
   $\comp{J_\fm} \simeq \ZZ \oplus (\ZZ/2\ZZ)^{\max(\e2-1,0)}
    \oplus (\ZZ/3\ZZ)^{\max(\e3-1,0)}$
\item
\label{thm:X0pM2}
  The homomorphism $\comp{T_\fm}=\ZZ \to \comp{J_\fm}$ is
    given in terms of the isomorphism \textup{(a)} by
    \[
      1\mapsto \begin{cases}
        n               &\text{if }\e2=\e3=0 \\    
        (2n-\e2 ; 1,\dots,1) &\text{if }\e2>0,\ \e3=0 \\
        (3n-2\e3; 1,\dots,1) &\text{if  }\e2=0,\ \e3>0\\
        (6n-3\e2-4\e3;1,\dots,1;1,\dots,1)
                         &\text{otherwise.}          
      \end{cases}
    \]
  \end{enumerate}
\end{thm}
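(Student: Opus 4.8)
The plan is to apply Corollary \ref{cor:semistable}(b). We work over $S=\Spec\Rsh$ with $R=\ZZ_{(p)}$, so $k=\FFpbar$, and take $\cX$ to be the minimal desingularisation of the Deligne--Rapoport model $\cX'$ of $X_0(N)$. Since $p>3$ and $p\|N$, the special fibre $\cX'_s$ is the union of two copies $Z_\infty,Z_0$ of $X_0(M)_{\FFpbar}$ meeting transversally at the $n=\#\SPSM$ supersingular points, and $\cX'$ is regular away from singularities of type $xy=\varpi^2$ (resp.\ $xy=\varpi^3$) at the $\e2$ (resp.\ $\e3$) supersingular points with $\#\Aut(E,C)=4$ (resp.\ $6$). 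A standard local computation of these resolutions (as in \cite{DeRa72}, \cite{Edi90}) shows that $\cX$ is regular with reduced special fibre: $\cX_s$ is obtained from $Z_\infty\cup Z_0$ by inserting, at each of the $\e2$ exceptional points, one $\mathbb{P}^1$ of multiplicity one, and at each of the $\e3$ exceptional points, a chain of two such $\mathbb{P}^1$'s. In particular $\cX$ is semistable, $\delta=d=1$, and hypotheses (H1--3) of \S\ref{sec:raynaud} hold. The cusps $\infty,0$ are $\QQ$-rational, hence $R_i=R$ and $e_i=1$ for $i\in I=\{\infty,0\}$, so $\cT_\fm=\cGm$ and $\comp{T_\fm}=\ZZ^I/\ZZ\cong\ZZ$; and since $\cX$ is regular their reductions are smooth points of $Z_\infty$, $Z_0$, so $I=I^\reg$. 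Thus Corollary \ref{cor:semistable}(b) gives
\[
\comp{J_\fm}\;\cong\;\coker\bigl((\laplace,\theta^*)\colon \ZZ[C]\to \ZZ[C]\zero\oplus\ZZ^I/\ZZ\bigr),
\]
where $C=\pi_0(\cX_s)$, $\laplace$ is the Laplacian of the reduced dual graph $\Gamma=\Gamma_{\cX_s}$, $\theta\colon I\to C$ sends $\infty\mapsto Z_\infty$, $0\mapsto Z_0$, and we identify $\ZZ^I/\ZZ\cong\ZZ$ via $(m_\infty,m_0)\mapsto m_\infty-m_0$, so $\theta^*(Z_\infty)=1$, $\theta^*(Z_0)=-1$, $\theta^*=0$ on the remaining vertices.

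By the above, $\Gamma$ has vertices $Z_\infty$, $Z_0$, one vertex $P_a$ for each of the $\e2$ exceptional points, and two vertices $Q_b,Q_b'$ for each of the $\e3$; the two "hub" vertices are joined by $n-\e2-\e3$ edges, and there are paths $Z_\infty-P_a-Z_0$ and $Z_\infty-Q_b-Q_b'-Z_0$. Hence $\deg Z_\infty=\deg Z_0=n$ and the other vertices have degree $2$. I would compute the cokernel by eliminating the subdivision vertices: writing $\zeta$ for the class of $Z_\infty-Z_0$, the relations $\laplace(P_a)=0$, $\laplace(Q_b)=0$, $\laplace(Q_b')=0$ read, in $\comp{J_\fm}$,
\[
2(P_a-Z_0)=\zeta,\qquad (Q_b-Z_0)=2(Q_b'-Z_0),\qquad 3(Q_b'-Z_0)=\zeta .
\]
The relation $(\laplace(Z_\infty),\theta^*(Z_\infty))=0$ then expresses the generator $e$ of $\ZZ^I/\ZZ$ as
\[
e\;=\;-n\zeta+\sum_a(P_a-Z_0)+2\sum_b(Q_b'-Z_0),
\]
and the relation at $Z_0$ becomes redundant (it is the "all-components" relation). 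So $\comp{J_\fm}$ is presented by the generators $\zeta$, $(P_a-Z_0)$ $(1\le a\le\e2)$, $(Q_b'-Z_0)$ $(1\le b\le\e3)$ subject only to $2(P_a-Z_0)=\zeta$ and $3(Q_b'-Z_0)=\zeta$. An elementary Smith-normal-form analysis of this presentation gives
\[
\comp{J_\fm}\;\cong\;\ZZ\oplus(\ZZ/2\ZZ)^{\max(\e2-1,0)}\oplus(\ZZ/3\ZZ)^{\max(\e3-1,0)},
\]
which is (a): if $\e2=\e3=0$ the group is $\ZZ\zeta$; if only $\e2>0$ one eliminates $\zeta=2(P_1-Z_0)$ and changes basis to $(P_1-Z_0)$ and the differences $(P_a-Z_0)-(P_1-Z_0)$; similarly if only $\e3>0$; and in general, since $\gcd(2,3)=1$, one introduces a common generator $g$ with $\zeta=6g$, $P_1-Z_0=3g$, $Q_1'-Z_0=2g$, and the free part is $\ZZ g$.

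For (b) one tracks the image of $1\in\comp{T_\fm}=\ZZ^I/\ZZ$, i.e.\ of $e$, under each of these identifications, using the displayed formula for $e$: with $\e2=\e3=0$, $e=-n\zeta$, giving $1\mapsto n$ up to sign; with only $\e2>0$, substituting $\zeta=2(P_1-Z_0)$ and collecting terms gives free coordinate $\e2-2n$ and a $1$ in each $\ZZ/2$-slot; with only $\e3>0$ one gets $2\e3-3n$ together with a $1$ in each $\ZZ/3$-slot (after using $2\equiv-1\bmod 3$); and in general $e=-6ng+\sum_a(P_a-Z_0)+2\sum_b(Q_b'-Z_0)$ unwinds to free coordinate $3\e2+4\e3-6n$ with a $1$ in every torsion slot. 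Up to the sign of the chosen free generator these are the four formulas of (b). The main obstacle is not this linear algebra but the geometric input it rests on: knowing the dual graph of $\cX_s$ exactly — that extra-automorphism supersingular points contribute edges subdivided once or twice according to $\#\Aut(E,C)=4$ or $6$ — and that every component of $\cX_s$ has multiplicity one, so that $\cX$ is semistable and Corollary \ref{cor:semistable} applies; this is the content of \cite{DeRa72} and \cite{Edi90}. A secondary point requiring care in (b) is fixing compatible sign and generator conventions for the non-canonical splitting of $\comp{J_\fm}$ and the identification $\comp{T_\fm}\cong\ZZ$ of \eqref{eq:pi0-of-tori}.
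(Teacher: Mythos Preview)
Your proof is correct and follows essentially the same route as the paper's. The only cosmetic difference is that you invoke Corollary~\ref{cor:semistable}(b) after observing that the minimal resolution $\cX$ is semistable (all exceptional $(-2)$-curves have multiplicity one), whereas the paper quotes the more general Theorem~\ref{thm:phi_m} and writes out the full intersection matrix; since all $\delta_j=1$ here the two formulations coincide and your Laplacian relations $2(P_a-Z_0)=\zeta$, $3(Q_b'-Z_0)=\zeta$, together with the expression for $e$, are exactly the paper's relations $\overline Z=2\overline E_i=3\overline F_{0,i}$ and its formula for $V$ (up to the harmless overall sign you note).
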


\begin{proof}
  Recall that the special fibre $\cX_s'$ of the Deligne--Rapoport
  model of $X_0(N)$ is the union of two copies of $X_0(M)_{\FFpbar}$,
  meeting tranversally at the supersingular points. The cusps $\infty$
  and $0$ belong to different components. The total space $\cX'$ has a
  type $A_j$ quotient singularity at each point where
  $\#\Aut(E,C)=2j\in\{4,6\}$. Taking their minimal resolution gives the
  model $\cX$. Its special fibre is obtained by replacing each
  crossing point which is an $A_j$-singularity with a chain of $(j-1)$
  copies of $\mathbb{P}^1$. In other words, $\cX_s$ has $2+\e2+2\e3$
  irreducible components:
  \begin{itemize}
  \item $Z_\infty$ and $Z_0$, the strict transforms of the irreducible
    components of $\cX_s'$, isomorphic to
    $X_0(M)_{\FFpbar}$, and labelled in such a way that the cusp
    $\alpha\in\{\infty,0\}$ belongs to $Z_\alpha$.
  \item Components in the fibres of $\cX_s\to \cX_s'$: denote these as
    $E_i$ (for $1\le i\le \e2$), and $F_{\infty,i}$, $F_{0,i}$ (for
    $1\le i\le \e3$), where $F_{\alpha,i}$ intersects $Z_\alpha$.
  \end{itemize}
  Their intersection numbers are
  \begin{itemize}
  \item $(Z_\alpha.Z_\alpha)=-n$, $(Z_\infty.Z_0)=n-\e2-\e3$
  \item $(Z_\alpha.E_i)=1=(Z_\alpha.F_{\alpha,i})$
  \item All other intersection numbers are zero.
  \end{itemize}
  We now use the formula for $\comp{J_\fm}$ from Theorem \ref{thm:phi_m}. We
  have $C=\pi_0(\widetilde\cX_s)$, and write $\dual Y\in \ZZ^C$ for the
  basis element dual to $Y\in C$. 
          We also have $I=\{\infty,0\}$, and $e$ (as in Theorem
  \ref{thm:phi_m}) equals $(1,1)$. Therefore
  $\ZZ^I/e\ZZ =\ZZ.V$ where $V$ is the image of the dual of
  $\infty$ (so that $-V$ is the image of the dual of $0$). The map
  $h\colon \ZZ[C] \to \ZZ^I/e\ZZ$ takes $Z_\infty$ to $V$ and $Z_0$ to
  $-V$, and all other elements of $C$ to $0$. The map $(a,h)\colon \ZZ[C]
  \to \ZZ^C\oplus \ZZ^I/e\ZZ$ is then given by the matrix:
  \[
  \begin{blockarray}{cccccccccccc}
    & Z_\infty & Z_0 & E_1 & \cdots & E_{\e2} & F_{\infty,1} & F_{0,1}
    &\cdots &F_{\infty,\e3}&F_{0,\e3}\\ 
    \begin{block}{c[ccccccccccc]}
      \dual Z_\infty & -n & n-\e2-\e3& 1 & \cdots & 1 & 1 & 0 & \cdots &1 &0
      \\
      \dual Z_0 & n-\e2-\e3& -n & 1 & \cdots & 1 & 0 & 1 & \cdots &0 &1
      \\
      \dual E_1 & 1 & 1 & -2 &
      \\
      \vdots & \vdots &\vdots&& \ddots
      \\
      \dual E_{\e2}&1&1&&&-2
      \\
      \dual F_{\infty,1}&1&0&&&&-2&1
      \\
      \dual F_{0,c_1}&0&1&&&&1&-2
      \\
      \vdots&\vdots&\vdots& &&&&&\ddots
      \\
      \dual F_{\infty,\e3}&1&0&&&&&&&-2&1
      \\
      \dual F_{0,\e3}&0&1&&&&&&&1&-2
      \\
      V&1&-1&0&\cdots&&&&&&0
      \\
    \end{block}
  \end{blockarray}
  \]
  As a basis for $\ZZ^{C,0}$ we take $\overline Z=\dual Z_\infty -
  \dual Z_0$,   $\overline E_i=\dual E_i - \dual Z_0$,
  $\overline F_{\alpha,i}= \dual F_{\alpha,i}- \dual Z_0$. So $\comp{J_\fm}$ is
  isomorphic to the quotient of the free module generated by
  $\overline Z$, $\{\overline E_i\}$, $\{\overline F_{\alpha,i}\}$ and
  $V$ by the submodule of relations
  \begin{gather*}
    \overline Z = 2 \overline E_i = 3\overline F_{0,i},\qquad
    \overline F_{\infty,i} = 2\overline F_{0,i} \\
    V = n\overline Z - \sum_{i=1}^{\e2} \overline E_i -
    2\sum_{i=1}^{\e3} \overline F_{0,i}\,.
  \end{gather*}
  If $\e2>1$ then for every $i>1$, $U_i=\overline E_i-\overline E_0$
  has order $2$, and if $\e3>1$ then $V_i=\overline F_{0,i}-\overline
  F_{0,0}$ has order $3$. The subgroup generated by $\overline Z$,
  $\overline E_1$ (if $\e1\ge1$) and $\overline F_{1,0}$ (if
  $\e3\ge1$) is infinite cyclic, with generator
  \begin{align*}
    \overline Z\ \ \qquad                &\text{if }\e2=\e3=0 \\
    \overline E_1\ \qquad                &\text{if }\e2>0,\ \e3=0 \\
    \overline F_{0,1}\qquad              &\text{if  }\e2=0,\ \e3>0\\
    \overline E_1- \overline F_{0,1}\quad&\text{otherwise}          
  \end{align*}
  This gives (a), and (b) follows since the inclusion
  $\comp{T_\fm}=\ZZ \to \comp{J_\fm}$ maps $1$ to $V$.
\end{proof}

Since $\comp{J}=\comp{J_\fm}/\comp{T_\fm}$, an easy computation gives:
\begin{cor}\textup{(\cite[Table 2]{MR77}; \cite[4.4.1]{Edi91})}
  \label{cor:mazur-rapoport-formula}
\[
  \comp{J}\simeq \ZZ/P\ZZ \oplus (\ZZ/2\ZZ)^{\max(\e2-2,0)}\oplus
  (\ZZ/3\ZZ)^{\max(\e3-2,0)}
\]
where
\[
  P = 2^{\min(\e2,2)}3^{\min(\e3,2)}\left(n - \frac{\e2}2 -
  \frac{2\e3}3 \right).
\]
\end{cor}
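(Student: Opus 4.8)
The proof is a direct computation from Theorem \ref{thm:X0pM}. The plan is: by the exact sequence of component groups $0 \to \comp{T_\fm} \to \comp{J_\fm} \to \comp{J} \to 0$ established above (a consequence of Theorem \ref{thm:main}(\ref{thm:main3}) and Corollary \ref{cor:tori-exact}, valid since $\comp{T_\fm} \cong \ZZ$ is torsion-free when $\fm=(\infty)+(0)$), one has $\comp{J} \cong \comp{J_\fm}/\comp{T_\fm}$, i.e. $\comp{J}$ is the cokernel of the homomorphism $\iota \colon \ZZ = \comp{T_\fm} \to \comp{J_\fm}$ whose value on $1$ is listed case-by-case in Theorem \ref{thm:X0pM}(\ref{thm:X0pM2}). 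So it remains to compute this cokernel in each of the four cases, which is routine linear algebra over $\ZZ$.

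Concretely, I would write $\comp{J_\fm} = \ZZ z \oplus (\ZZ/2\ZZ)^{\max(\e2-1,0)} \oplus (\ZZ/3\ZZ)^{\max(\e3-1,0)}$ using Theorem \ref{thm:X0pM}(\ref{thm:X0pM1}), with $z$ a generator of the free part, $u_1,\dots,u_{\e2-1}$ generators of the $2$-torsion summands and $v_1,\dots,v_{\e3-1}$ generators of the $3$-torsion summands. When $\e2 \ge 2$, replace $u_1$ by $u_1+\cdots+u_{\e2-1}$ (an automorphism of the $2$-torsion part), and symmetrically replace $v_1$ by $v_1+\cdots+v_{\e3-1}$ when $\e3 \ge 2$; after this the relator $\iota(1)$ becomes $Kz + u_1 + v_1$, where the $u_1$-term is present iff $\e2 \ge 2$, the $v_1$-term iff $\e3 \ge 2$, and $K$ equals $n$, $2n-\e2$, $3n-2\e3$, or $6n-3\e2-4\e3$ according to whether $(\e2,\e3)$ both vanish, only $\e3$ vanishes, only $\e2$ vanishes, or neither vanishes. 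The generators $u_i$ ($i \ge 2$) and $v_j$ ($j \ge 2$) are untouched by the relation and split off, contributing $(\ZZ/2\ZZ)^{\max(\e2-2,0)} \oplus (\ZZ/3\ZZ)^{\max(\e3-2,0)}$. For the rest: if $\e2 \ge 2$ use the relation to eliminate $u_1 = -Kz - [\e3\ge2]\,v_1$, turning $2u_1 = 0$ into $2Kz + 2[\e3\ge2]\,v_1 = 0$; if moreover $\e3 \ge 2$, combine $3v_1 = 0$ with $2v_1 = -2Kz$ to get $v_1 = 2Kz$ and hence $6Kz = 0$. One concludes that the surviving cyclic factor is $\ZZ/c\ZZ$ with $c = K$ when $\e2,\e3 \le 1$; $c = 2K$ when $\e2 \ge 2$, $\e3 \le 1$; $c = 3K$ when $\e2 \le 1$, $\e3 \ge 2$; and $c = 6K$ when $\e2,\e3 \ge 2$. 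A one-line arithmetic check in each of these cases shows $c$ equals the quantity $P$ in the statement, which completes the proof.

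The computation has no conceptual content, but two points need care. The main one is the mixed case $\e2 \ge 2$ and $\e3 \ge 2$, where a $\ZZ/2$- and a $\ZZ/3$-generator both appear in the single defining relation: here one must use $\gcd(2,3)=1$ to see that the leftover cyclic factor has order exactly $6K$ (not $2K$ or $3K$), via the identity $v_1 = 3v_1 - 2v_1 = 2Kz$ in the quotient. The second is bookkeeping for the degenerate subcases $\e2 = 1$ (resp. $\e3 = 1$), where there is no $u_1$ (resp. $v_1$) and $\iota(1)$ already has the reduced form $Kz$ or $Kz + v_1$ (resp. $Kz + u_1$) with no change of basis required, and $\e2 = \e3 = 0$, where $\comp{J_\fm} = \ZZ z$ and $\comp{J} = \ZZ/n\ZZ$ immediately; all of these are consistent with the formulas above upon substituting the relevant $K$.
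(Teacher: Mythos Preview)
Your proof is correct and is exactly the approach the paper takes: the paper simply writes ``Since $\comp{J}=\comp{J_\fm}/\comp{T_\fm}$, an easy computation gives'' and states the result, and you have carried out that easy computation carefully and correctly, including the mixed case $\e2,\e3\ge2$ where the coprimality of $2$ and $3$ is needed.
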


\subsubsection{$X_0(p)$ with $\fm=(\infty)+(0)$}
\label{sec:X0p}

In the setting of Theorem \ref{thm:X0pM}, if $\e2$ and $\e3$ are at
most $1$, then $\comp{J_\fm}$ is infinite cyclic and the map
$\comp{T_\fm}\simeq\ZZ \to \comp{J_\fm}$ is, up to sign,
multiplication by the order of the cyclic group $\comp{J}$. Therefore
the actions of all the Hecke operators $T_\ell$ (including for
$p\divides N$) can be computed from the actions on $\comp{T_\fm}$. For
example, suppose $N=p$. Then by Example
\ref{ex:hecke-calcs}(\ref{ex:hecke-calcs-p}), without appealing to the
results of Ribet and Edixhoven we obtain:
\begin{cor}
  Suppose that $N=p$ and $\fm=(\infty)+(0)$. Then
  \begin{itemize}
  \item $\comp{J_\fm}$ is infinite cyclic.
  \item $\comp{J}=\coker(\comp{T_\fm} \to \comp{J_\fm})$ is cyclic of
    order $n$, the numerator of $(p-1)/12$.
  \item For $\ell\ne p$, $T_\ell=\ell+1$ on $\comp{J_\fm}$, and
    $T_p=1$ on $\comp{J_\fm}$.
  \end{itemize}
\end{cor}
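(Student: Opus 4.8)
The plan is to derive all three assertions from Theorem~\ref{thm:X0pM} (in the case $M=1$), from the exactness of $0\to\comp{T_\fm}\to\comp{J_\fm}\to\comp{J}\to0$ (which follows from Theorem~\ref{thm:main}(\ref{thm:main3}) together with Corollary~\ref{cor:tori-exact}), and from the cusp computations of Example~\ref{ex:hecke-calcs}. The starting point is that, since $p>3$, there is at most one supersingular elliptic curve over $\FFpbar$ with $j$-invariant $1728$ and at most one with $j$-invariant $0$, so $\e2,\e3\in\{0,1\}$; hence $\max(\e2-1,0)=\max(\e3-1,0)=0$, and Theorem~\ref{thm:X0pM}(\ref{thm:X0pM1}) gives $\comp{J_\fm}\simeq\ZZ$ at once.

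For the second assertion I would note that $\comp{J}$ is the cokernel of the injection $\comp{T_\fm}\simeq\ZZ\hookrightarrow\comp{J_\fm}\simeq\ZZ$, hence finite cyclic of order equal to the index, which Theorem~\ref{thm:X0pM}(\ref{thm:X0pM2}) gives explicitly. Feeding in the Eichler mass formula $\#\SPS_1-\tfrac12\e2-\tfrac23\e3=\tfrac{p-1}{12}$, this index works out to $2^{\e2}3^{\e3}\cdot\tfrac{p-1}{12}$; since $\e2=1$ precisely when $p\equiv3\pmod4$ and $\e3=1$ precisely when $p\equiv2\pmod3$, inspecting the four residue classes of $p$ modulo $12$ identifies it with the numerator of $(p-1)/12$. (Alternatively one may simply quote Corollary~\ref{cor:mazur-rapoport-formula} with $M=1$.)

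For the third assertion the idea is to reduce the computation to the torus $T_\fm$. The Hecke operators stabilise $T_\fm$: for $\ell\ne p$ because $\fm$ is stable under $T_\ell$ in the sense of Example~\ref{ex:functoriality1}, and for $\ell=p$ because $\fm$ is the full cuspidal modulus of $X_0(p)$, which is stable under $T_p$; in either case $T_\ell$ is an endomorphism of the semiabelian variety $J_\fm$ and hence carries its maximal subtorus $T_\fm$ into itself. Consequently $T_\ell$ acts compatibly on the exact sequence of N\'eron models $0\to\cT_\fm\to\cJ_\fm\to\cJ\to0$, and so on the inclusion $\comp{T_\fm}\hookrightarrow\comp{J_\fm}$; since both groups are infinite cyclic and the cokernel is finite, any endomorphism of $\comp{J_\fm}$ acts by the \emph{same} integer on $\comp{T_\fm}$, so it is enough to work there. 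As the cusps $\infty$ and $0$ are $\QQ$-rational, $T_\fm\simeq\Gm$, so $\mathrm{End}_\F(T_\fm)=\ZZ$, and a given element of it acts by one and the same integer on the character group $\ch{T_\fm}=X^*(T_\fm)$ and on $\comp{T_\fm}=\pi_0(\cT_{\fm,s})$. By Example~\ref{ex:hecke-calcs}(\ref{ex:hecke-calcs-Tl}), ${}^tT_\ell$ is multiplication by $\ell+1$ on $\ch{T_\fm}$ for $\ell\ne p$, and by Example~\ref{ex:hecke-calcs}(\ref{ex:hecke-calcs-p}), ${}^tT_p$ is the identity; hence $T_\ell$ acts on $\comp{J_\fm}$ as multiplication by $\ell+1$, and $T_p$ as the identity.

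The arithmetic with $\e2$, $\e3$ and the residue of $p$ modulo $12$ carries no real difficulty. The delicate point, which I regard as the crux, is the last paragraph: checking that $T_\ell$ genuinely preserves $T_\fm$ and therefore acts equivariantly on the component-group extension, and that for the one-dimensional split torus $T_\fm$ the single integer read off from the character-group computation of Example~\ref{ex:hecke-calcs} is exactly the one controlling the action on $\pi_0$ of the N\'eron model.
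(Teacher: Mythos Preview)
Your proof is correct and follows essentially the same route as the paper: invoke Theorem~\ref{thm:X0pM} with $M=1$ (where $\e2,\e3\le1$) to get $\comp{J_\fm}\simeq\ZZ$ and identify the index of $\comp{T_\fm}$, then use the fact that an endomorphism of $\comp{J_\fm}\simeq\ZZ$ preserving the finite-index subgroup $\comp{T_\fm}$ is determined by its restriction, which is read off from Example~\ref{ex:hecke-calcs}(\ref{ex:hecke-calcs-Tl},\ref{ex:hecke-calcs-p}). The paper states this argument in one sentence in the paragraph preceding the Corollary; your version simply makes the details (stability of $T_\fm$ under $T_\ell$, the identification $\mathrm{End}(\Gm)=\ZZ$, the Eichler mass computation) explicit.
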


\subsubsection{ $X_0(pM)$ with $(p,M)=1$ and $\fm$ a general cuspidal
modulus}
\label{sec:X0pM-bis}
Now let $\fm$ be any nonzero reduced modulus supported on the
cusps of $X_0(N)$, with $p$ exactly dividing $N$. Recall that we are
working over the strict henselisation $R$ of $\ZZ_{(p)}$. Then since $p^2\notdivides
N$, all the cusps are rational over $\F$ so $e=(1,\dots,1)$ and
\[
  \comp{J_\fm} = \coker( \ZZ[C] \xrightarrow{(a,h)} \ZZ^{C,0}\oplus
  \ZZ^I/\diag(\ZZ) )
\]
with $I=\supp(\fm)\subset X_0(N)^\infty(\QQbar)$.

\begin{prop}
    If the closure of the support of $\fm$ meets just one component of the
    special fibre $\cX'_s$, then there is a canonical splitting
    \[
      \comp{J_\fm}=\comp{J}\oplus \comp{T_\fm}.
    \]
    Otherwise, if $x_0=\infty$, $x_0\in\supp(\fm)$ meet $Z_\infty$,
    $Z_0$ respectively, and $\fm'=(x_\infty)+(x_0)$, then there
    is a canonical splitting 
    \[
      \comp{J_\fm}=\comp{J_{\fm'}}\oplus
      \ZZ^{I\setminus\{x_\infty,x_0\}}.
    \]
\end{prop}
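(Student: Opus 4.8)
The plan is to argue combinatorially from Corollary~\ref{cor:semistable}(b), in the geometric setting of the proof of Theorem~\ref{thm:X0pM}: $S=\Spec R$ with $R$ the strict henselisation of $\ZZ_{(p)}$, $\cX'$ the Deligne--Rapoport model of $X_0(N)$, and $\cX\to\cX'$ its minimal desingularisation, which is semistable, with $\cX_s$ the union of the two copies $Z_\infty$, $Z_0$ of $X_0(M)_{\FFpbar}$ (glued at the supersingular points) together with the $\PP^1$-chains over the supersingular points where $j=0$ or $1728$. Since $p$ exactly divides $N$, all cusps are $\F$-rational, they reduce to smooth points of $\cX_s$, and each reduces onto exactly one of $Z_\infty$, $Z_0$. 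Writing $C=\pi_0(\widetilde{\cX_s})$ and $I=\supp(\fm)$, the map $\theta\colon I\to C$ of \S\ref{sec:singular2} thus factors through $\{Z_\infty,Z_0\}$; put $I_\alpha=\theta^{-1}(Z_\alpha)$, so $I=I_\infty\sqcup I_0$. By Corollary~\ref{cor:semistable}(b),
\[
  \comp{J_\fm}\cong\coker\!\bigl((\laplace,\theta^*)\colon \ZZ[C]\to \ZZ[C]\zero\oplus \ZZ^I/\diag(\ZZ)\bigr),
\]
where (in the notation of Theorem~\ref{thm:phi_m}, using that the cusps are sections through smooth points of $\cX_s$) $\theta^*(Z_\alpha)=\sum_{i\in I_\alpha}(i)\bmod\diag(\ZZ)$ and $\theta^*$ kills every other component of $\cX_s$.

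Suppose first that $\supp(\fm)$ meets a single component of $\cX'_s$, say $I_0=\varnothing$, so $I_\infty=I$. Then $\theta^*(Z_\infty)=\sum_{i\in I}(i)\equiv0$ and $\theta^*$ vanishes on all other components, hence $\theta^*=0$ and $\comp{J_\fm}\cong\coker(\laplace\colon\ZZ[C]\to\ZZ[C]\zero)\oplus\ZZ^I/\diag(\ZZ)$. By Raynaud's description~\eqref{eq:raynaud} of $\comp{J}$ (here $a=\laplace$) and by Proposition~\ref{prop:tori-neron}(b), this is $\comp{J}\oplus\comp{T_\fm}$, and the splitting is canonical since $\ZZ^I/\diag(\ZZ)=\comp{T_\fm}$ is the canonical subobject of $0\to\comp{T_\fm}\to\comp{J_\fm}\to\comp{J}\to0$.

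Now suppose both $I_\infty$ and $I_0$ are nonempty; fix $x_\infty\in I_\infty$, $x_0\in I_0$, and set $I'=\{x_\infty,x_0\}$, $\fm'=(x_\infty)+(x_0)$. Using $x_\infty$, $x_0$ as basepoints I would write down the isomorphism
\[
  \ZZ^I/\diag(\ZZ)\ \longisom\ \ZZ^{I\setminus I'}\oplus\bigl(\ZZ^{I'}/\diag(\ZZ)\bigr),\quad
  [(n_i)]\mapsto\bigl((n_i-n_{x_\infty})_{i\in I_\infty\setminus\{x_\infty\}},\,(n_i-n_{x_0})_{i\in I_0\setminus\{x_0\}};\,[(n_{x_\infty},n_{x_0})]\bigr),
\]
which on the first summand is the canonical inclusion $\ZZ^{I\setminus I'}=\comp{\prod_{i\in I\setminus I'}\Gm}\hookrightarrow\comp{T_\fm}$ and whose projection to the second is the canonical surjection $\comp{T_\fm}\twoheadrightarrow\comp{T_{\fm'}}$ (both computed from Proposition~\ref{prop:tori-neron} by valuations). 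A direct calculation, using only the identity $\theta^*(Z_\infty)+\theta^*(Z_0)=\sum_{i\in I}(i)\equiv0$ in $\ZZ^I/\diag(\ZZ)$, shows that under this isomorphism $\theta^*$ takes values in the summand $\ZZ^{I'}/\diag(\ZZ)$ and, after identifying the latter with $\ZZ$ by $[(n_{x_\infty},n_{x_0})]\mapsto n_{x_0}-n_{x_\infty}$, equals the map $\theta'^*$ ($Z_\infty\mapsto-1$, $Z_0\mapsto1$, all other components $\mapsto0$). Hence $(\laplace,\theta^*)$ has image in $\ZZ[C]\zero\oplus0\oplus(\ZZ^{I'}/\diag(\ZZ))$, and
\[
  \comp{J_\fm}\ \cong\ \coker\!\bigl((\laplace,\theta'^*)\colon\ZZ[C]\to\ZZ[C]\zero\oplus\ZZ^{I'}/\diag(\ZZ)\bigr)\ \oplus\ \ZZ^{I\setminus I'}\ =\ \comp{J_{\fm'}}\oplus\ZZ^{I\setminus I'}
\]
by Corollary~\ref{cor:semistable}(b) applied to $\fm'$. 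To see the surjection $\comp{J_\fm}\to\comp{J_{\fm'}}$ so produced is canonical, I would note it is the one induced by the modulus inclusion $\fm'\le\fm$, with kernel $\comp{T_0}$ for $T_0=\ker(T_\fm\to T_{\fm'})\cong\prod_{i\in I\setminus I'}\Gm$: applying Corollary~\ref{cor:tori-exact} to $0\to T_0\to J_\fm\to J_{\fm'}\to0$ gives the exact sequence $0\to\comp{T_0}\to\comp{J_\fm}\to\comp{J_{\fm'}}\to0$, and $\comp{T_0}\cong\ZZ^{I\setminus I'}$ canonically by Proposition~\ref{prop:tori-neron}(a).

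The routine part is the two applications of Corollary~\ref{cor:semistable}(b) and the identifications of $\comp{T_\fm}$, $\comp{T_0}$, $\comp{T_{\fm'}}$ via Proposition~\ref{prop:tori-neron}. The one step requiring care is choosing the $I'$-adapted coordinates on $\ZZ^I/\diag(\ZZ)$ that make it visible that the image of $\theta^*$ collapses onto the rank-one summand $\ZZ^{I'}/\diag(\ZZ)$ and restricts there to $\theta'^*$ — this is exactly where the relation $\theta^*(Z_\infty)+\theta^*(Z_0)\equiv0$ is used, and it is the crux of the second case.
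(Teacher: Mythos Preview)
Your argument is correct and follows essentially the same approach as the paper: both reduce to the cokernel description of $\comp{J_\fm}$ via Theorem~\ref{thm:phi_m} (equivalently Corollary~\ref{cor:semistable}(b) in the semistable case), observe that $h$ (your $\theta^*$) vanishes on all components except $Z_\infty$ and $Z_0$, and then split $\ZZ^I/\diag(\ZZ)$ into the image of $h$ and a complement isomorphic to $\ZZ^{I\setminus\{x_\infty,x_0\}}$. Your explicit $I'$-adapted coordinates make this decomposition transparent, whereas the paper records it more tersely as $\ZZ^I/\diag(\ZZ)=\im(h)\oplus\{b\in\ZZ^I\mid b_{x_\infty}=b_{x_0}\}/\diag(\ZZ)$; these are the same splitting.
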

\begin{proof}
  In the first case, we may assume that the closure of the support of $\fm$ meets only
  the component $Z_\infty$.   Then $h(Y)=0$ if $Y\in C$, $Y\ne
  Z_\infty$ but $h(Z_\infty)=(1,\dots,1)$, so the composite $h\colon
  \ZZ[C]\to\ZZ^I/\diag(\ZZ)$ is zero.

  In the second case, we have $h(Y)=0$ if $Y\notin\{Z_\infty,Z_0\}$,
  and
  \begin{align*}
    h(Z_\infty)&= (1,\dots,1,0,\dots,0) \\
    h(Z_0) &= (0,\dots,0,1,\dots,1)
  \end{align*}
  for a suitable ordering of $I$. Therefore
  \[
    \ZZ^I/\diag(\ZZ) = \im(h) \oplus \{ b \in \ZZ^I \mid
    b(Z_\infty)=b(Z_0) \}/\diag(\ZZ)
  \]
  giving the splitting.
\end{proof}

\subsubsection{$X_0(p^2)$}
\label{sec:X0p2}

Finally, let us consider the curve $X_0(p^2)$, $p>3$. The Katz-Mazur model
$\cX'$ over $S$ has three irreducible components in its special fibre, which
we denote $Z_i'$ ($0\le i\le2$). The non-supersingular non-cuspidal points of $Z_i'$
parametrize pairs $(E,C)$, where $E$ is an elliptic curve and $C$ is
a cyclic (in the sense of Drinfeld) subgroup scheme of rank $p^2$,
whose \'etale quotient has rank $p^i$. The components $Z_0'$, $Z_2'$
have multiplicity $1$, and $Z_1'$ has multiplicity $p-1$. They meet at
the supersingular points.

The cuspidal divisor $X_0(p^2)_\QQ^\infty$ consists of three closed
points $\infty=z_1 = \Spec \QQ$, $z_p=  \Spec\QQ(\mmu_p)$ and
$0=z_{p^2}= \Spec\QQ$, in the notation of Section \ref{sec:mod-gen}.
For each $i$, the closure in $\cX'$ of the point  $z_{p^i}$ meets the
component $Z_i'$ in a single point, and the
completed local ring at the intersection is computed in
\cite[Proposition 1.2.2.1]{Edi90} as
\begin{align}
  \ZZ_p[[q]]\qquad & \quad\text{if $i=0$} \notag \\
  \ZZ_p[\mmu_p][[q]]\quad &\quad\phantom{\text{if $i=$ }}1
                            \label{eq:completed-local-p2}\\
  \ZZ_p[[q^{1/p^2}]]\quad& \quad\phantom{\text{if $i=$ }} 2 \notag
\end{align}
where $q$ is the usual parameter at infinity on the modular curve of
level $1$.

The minimal resolution $\pi\colon \cX \to \cX'$ is described in detail
in \cite[\S1.5]{Edi90}.  We summarise the final result. Write
$p=12k+1+4a+6b$, with $a$, $b\in\{0,1\}$. We again work over the strict henselisation $R$ of $\ZZ_{(p)}$.

The Katz--Mazur model $\cX'$ has
exactly two singular points, which are the points $x_0$,
$x_{1728} \in Z_1'$ lying over the points $j=0$, $1728$ in the curve
$X(1)_{\FFpbar}$. Let $E=\pi^{-1}(x_{1728})^{\red}$,
$F=\pi^{-1}(x_0)$. Then $E\simeq F \simeq \mathbb{P}^1$, $E$ has multiplicity
$(p-1+2b)/2$ and $F$ has multiplicity $(p-1+2a)/3$.

Let $Z_i$ be the reduced strict transform of $Z_i'$. The intersection
matrix of $\cX_s$ is:
\begin{equation}
  \label{eq:X0p2-int-mat}
  \begin{blockarray}{cccccc}
     & Z_0 & Z_1 & Z_2 & E & F\\ 
    \begin{block}{c[ccccc]}
      Z_0 & -\M & k & k & b & a \\
      Z_1 & k & -1 & k & 1 & 1 \\
      Z_2 & k & k & -\M & b & a \\
      E & b & 1 & b & -2 & 0 \\
      F  & a & 1 & a & 0 & -3 \\
    \end{block}
  \end{blockarray}
\end{equation}
where $\M=(p^2-1)/12-k$. As a basis for $\ker(\ZZ^C \xrightarrow{b}
\ZZ)$ we take $\overline Y = \dual Y - d_Y \dual Z_2$, for $Y\in \{Z_0, Z_1,
E, F\}$ and where $d_Y$ is the multiplicity of $Y$. (Since the residue
field is perfect, $d_Y=\delta_Y$.)

We first consider the modulus $\fm = X_0(p^2)_\QQ^\infty=\sum_{0\le i\le 2}(z_{p^i})$ of all cusps.  Since the cusp $z_p$ is isomorphic
to $\Spec \QQ(\mmu_p)$, and the other cusps are rational,
$e=(1,p-1,1)$. From the description
\eqref{eq:completed-local-p2} of the completed local rings at the
cusps, we see that $\Sigma\simeq \Spec R \sqcup \Spec R[\mmu_p]\sqcup
\Spec R$, and the pullback of the divisor $Z_i$ to the component of
$\Sigma$ which it meets has degree $1$. Therefore the matrix
$(h_{ij})$ giving
the pairing $C\times I\to \ZZ$ in Theorem
\ref{thm:phi_m} is
\[
  \begin{blockarray}{cccccc}
     & Z_0 & Z_1 & Z_2 & E & F\\ 
    \begin{block}{c[ccccc]}
      z_1 & 1 & 0 & 0 & 0 & 0 \\
      z_p & 0 & 1 & 0 & 0 & 0 \\
      z_{p^2} & 0 & 0 & 1 & 0 & 0 \\
    \end{block}
  \end{blockarray}\,.
\]
 Let $V_i\in \ZZ^I/e\ZZ$ be the
image of the $i$-th basis vector (dual to $z_{p^i}$) of
$\ZZ^I$. We will take $\{V_0,V_1\}$ as basis for $\ZZ^I/e\ZZ$.

Next consider the modulus $\fm'=(\infty)+(0)=(z_1)+(z_{p^2})$. Then
$e=(1,1)$, and the pairing $C\times I \to \ZZ$ is given by the same
matrix with the $z_p$-row deleted, and $\ZZ^I/e\ZZ$ is generated by
$V_0=-V_2$.

Under the isomorphism of \ref{thm:phi_m}, the image of $\ZZ^I/e\ZZ$
in the homology of the complex \eqref{eq:raynaud} is the subgroup
$\comp{T_\fm}$ of $\comp{J_\fm}$. The analogous statement holds for $\fm'$.

\begin{thm}
  \textup{(a)} The component group $\comp{J_\fm}$ is isomorphic to $\ZZ^2$, and
  \textup(for a suitable choice of isomorphism\textup), the image of the generators
  $V_0$, $V_1$ of $\comp{T_\fm}$ are
  \[
    (\M+(3b-2a)k-a+b, -6k-2a-3b)\quad\text{and}\quad (-k-b,1).
  \]
  \textup{(b)} The component group $\comp{J_{\fm'}}$ is isomorphic to $\ZZ$, and
  \textup(up to sign\textup), the image of the generator
  $V_0$ of $\comp{T_\fm}$ is $(p^2-1)/24$.
\end{thm}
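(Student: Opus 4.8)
The plan is to apply Theorem~\ref{thm:phi_m} directly; since we work over the strict henselisation $R$ of $\ZZ_{(p)}$, the complex \eqref{eq:raynaud2} (not \eqref{eq:raynaud2-tilde}) already applies verbatim. Because the residue field $\FFpbar$ is perfect, every $n_j=0$, so by \eqref{eq:raynaud-defs} the map $a$ of \eqref{eq:raynaud2} is given \emph{literally} by the intersection matrix \eqref{eq:X0p2-int-mat}, while $h\colon\ZZ[C]\to\ZZ^I/e\ZZ$ is given by the tables $(h_{ij})$ displayed just above the theorem. Thus $\comp{J_\fm}$ (resp.\ $\comp{J_{\fm'}}$) is the homology at the middle of $\ZZ[C]\xrightarrow{(a,h)}\ZZ^{C,0}\oplus\ZZ^I/e\ZZ\xrightarrow{b\oplus0}\ZZ$, and the canonical generators $V_0,V_1$ (resp.\ $V_0$) of $\comp{T_\fm}=\coker(e\colon\ZZ\to\ZZ^I)$ from Proposition~\ref{prop:tori-neron}(b) map into this homology via the second factor, using the exact sequence of Theorem~\ref{thm:main}(\ref{thm:main3}) and Corollary~\ref{cor:tori-exact}.

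First I would pin down the ranks and cut the presentation down. Since $Z_0$ and $Z_2$ have multiplicity one, $\delta=\gcd\{\delta_j\}=1$; by \cite[(8.1.2)]{Ray70} the complex \eqref{eq:raynaud} for $X_0(p^2)$ has $\ker(a)=\ZZ\cdot i(1)$ with $i(1)=\sum_j d_j(j)$, and $\comp{J}=\ker(b)/\im(a)$ is finite. The defining relation of $e$ (namely $V_0+(p-1)V_1+V_2=0$, resp.\ $V_0+V_2=0$, in $\ZZ^I/e\ZZ$) gives $h(i(1))=0$, so $\ker(a,h)=\ZZ\cdot i(1)$ as well; hence the $Z_2$-column of the matrix of $(a,h)$ equals $-\sum_{Y\ne Z_2}d_Y\,(a,h)(Y)$ and may be discarded. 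Combining the exact sequence $0\to\comp{T_\fm}\to\comp{J_\fm}\to\comp{J}\to0$ with the finiteness of $\comp{J}$ shows $\operatorname{rank}\comp{J_\fm}=\operatorname{rank}\comp{T_\fm}$, i.e.\ $2$ for $\fm$ and $1$ for $\fm'$; so the real content is that the extension is \emph{torsion-free}, with the asserted images of the $V_i$.

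To get this, choose the bases $\{\overline Y:Y\in\{Z_0,Z_1,E,F\}\}$ of $\ZZ^{C,0}$ (so the $\overline Y$-coordinates of $v\in\ZZ^{C,0}$ are just the $Y$-coordinates of $v$ for $Y\ne Z_2$) and $\{V_0,V_1\}$ (resp.\ $\{V_0\}$) of $\ZZ^I/e\ZZ$, and write out the four columns $(a,h)(Z_0),(a,h)(Z_1),(a,h)(E),(a,h)(F)$: the $a$-parts are the corresponding columns of \eqref{eq:X0p2-int-mat} read in $\overline Y$-coordinates, and the $h$-parts are $V_0,V_1,0,0$ (resp.\ $V_0,0,0,0$). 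The two rows coming from $h$ then express $V_0$ (and $V_1$) in terms of the classes of $\overline{Z_0},\overline{Z_1},\overline E,\overline F$; the $E$- and $F$-columns let one eliminate $\overline{Z_1}$ and leave, in case~(a), a single relation whose coefficient vector on $(\overline{Z_0},\overline E,\overline F)$ has the shape $(*,2,-3)$, and in case~(b) collapse the quotient onto the line $\ZZ\cdot\overline{Z_0}$. As $\gcd(2,3)=1$, in case~(a) the quotient is free of rank $2$; taking $\{\overline{Z_0},\ \overline E-\overline F\}$ as basis and back-substituting produces the stated coordinates of $V_0$ and $V_1$. In case~(b) back-substitution expresses $V_0$ as an explicit integer multiple of $\overline{Z_0}$; substituting $\M=(p^2-1)/12-k$ and $p=12k+1+4a+6b$, and using $a^2=a$, $b^2=b$, collapses that multiple to $(p^2-1)/24$.

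The only genuine difficulty is bookkeeping: one must keep straight the meaning of $h_{ij}=\deg g_i^{*}Y_j$, the sign relating $\ZZ^I/e\ZZ$ to the $V_i$ (dictated by $e=(1,p-1,1)$, resp.\ $(1,1)$), and the identity $\delta_j=d_j$; once these are fixed everything is a finite, if slightly tedious, row/column reduction over $\ZZ$. The one mildly surprising point --- that $\comp{J_\fm}$ is torsion-free although $\comp{J}$ for $X_0(p^2)$ is not --- falls out of the coprimality $\gcd(2,3)=1$ coming from the multiplicities of the exceptional curves $E$ and $F$. A useful internal check is that $\comp{J_\fm}/\comp{T_\fm}$ must reproduce Edixhoven's computation of $\comp{J}$ in \cite[4.4.2]{Edi91}, and that the ranks agree with $\operatorname{rank}\comp{T_\fm}$.
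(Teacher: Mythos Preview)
Your proposal is correct and follows essentially the same approach as the paper: apply Theorem~\ref{thm:phi_m}, write down the presentation from the intersection matrix \eqref{eq:X0p2-int-mat} and the pairing $(h_{ij})$, drop the redundant $Z_2$-column, and row-reduce. Your basis $\{\overline{Z_0},\,\overline E-\overline F\}$ is exactly the one the paper uses implicitly (in its isomorphism $\overline{Z_0}\mapsto(1,0)$, $\overline E\mapsto(a-b,3)$, $\overline F\mapsto(a-b,2)$, so $\overline E-\overline F\mapsto(0,1)$); the only cosmetic difference is that for (b) the paper obtains $\comp{J_{\fm'}}$ as $\comp{J_\fm}/\langle V_1\rangle$ (since dropping the cusp $z_p$ replaces the $Z_1$-column relation $V_1=-k\overline{Z_0}+\overline{Z_1}-\overline E-\overline F$ by the same relation with $V_1$ set to $0$) and then composes with $(1,k+b)\colon\ZZ^2\to\ZZ$, whereas you rerun the reduction directly.
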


\begin{rem}
  (i) From the computation in (b) we recover the result
  \cite[Sect. 4.1, Prop. 2]{Edi91} that $\comp{J}$ is cyclic of order
  $(p^2-1)/24$.
  
  (ii) In both cases the map $\comp{T_\fm} \to \comp{J_\fm}$
  is an injection of free abelian groups of the same rank, so the
  action of Hecke operators on $\comp{T_\fm}$ determines that on
  $\comp{J_\fm}$ and therefore on the quotient $\comp{J}$, ``by pure
  thought''.
      \end{rem}  

\begin{proof}
  From \eqref{eq:X0p2-int-mat} we see that $\comp{J_\fm}$ is generated
  by $\{V_0,V_1,\overline Z_0,\overline Z_1,\overline E,\overline F\}$
  with relations
  \begin{gather*}
    V_0=\M\overline Z_0 -k\overline Z_1 -b\overline E-a\overline F\\
    V_1=-k\overline Z_0 +\overline Z_1 -\overline E -\overline F\\
    b\overline Z_0 +\overline Z_1-2\overline E=0=a\overline Z_0
    +\overline Z_1 -3 \overline F
  \end{gather*}
  and linear algebra then gives an isomorphism $\comp{J_\fm} \isom
  \ZZ^2$ by
  \begin{align*}
    \overline Z_0 &\mapsto (1,0) \\
    \overline Z_1 &\mapsto (2a-3b,6) \\
    \overline E &\mapsto (a-b,3) \\
    \overline F &\mapsto (a-b,2) \\
    V_0 &\mapsto (\M-(2k+1)a+(3k+1)b, -6k-2a-3b) \\
    V_1 &\mapsto (-k-b,1)
  \end{align*}
  This proves (a). For (b), we compose with the map $\ZZ^2
  \xrightarrow{(1,k+b)} \ZZ$, whose kernel is the subgroup generated
  by $V_1$, and which takes $V_0$ to
  \[
    \M-(2k+1)a+(3k+1)b +(k+b)(-6k-2a-3b)=\frac{p^2-1}{24}.
  \]
\end{proof}

\bibliography{neron}

\end{document}